\newtheorem{theorem}{Theorem}
\newtheorem{proposition}[theorem]{Proposition}
\newtheorem{lemma}[theorem]{Lemma}
\theoremstyle{definition}
\newtheorem{remark}[theorem]{Remark}
\newtheorem{conjecture}[theorem]{Conjecture}
\newcommand{\ls}{\leqslant}
\newcommand{\gs}{\geqslant}
\renewcommand{\Re}{\operatorname{Re}}
\renewcommand{\Im}{\operatorname{Im}}
\begin{document}

\title{\bf Selberg-type integrals and the variance conjecture for the operator norm}

\medskip

\author{Beatrice-Helen Vritsiou}

\date{}

\maketitle

\begin{abstract}
\small
The variance conjecture in Asymptotic Convex Geometry stipulates
that the Euclidean norm of a random vector uniformly distributed in a (properly normalised) 
high-dimensional convex body $K\subset {\mathbb R}^n$ satisfies a Poincaré-type inequality,
implying that its variance is much smaller than its expectation.
We settle the conjecture for the cases when $K$ is the unit ball of the operator norm 
in classical subspaces of square matrices, which include the subspaces of self-adjoint matrices.
Through the estimates we establish, we are also able to show that the unit ball of the operator norm
in the subspace of real symmetric matrices or in the subspace of Hermitian matrices is
not isotropic, yet is in almost isotropic position.
\end{abstract}

\section{Introduction}\label{sec:intro}

This note is a follow-up on \cite{Radke-V-2016}, in which we were concerned with the question whether 
the variance (or thin-shell) conjecture holds true for unit balls of the $p$-Schatten norms. 
Given a convex body $K$ in ${\mathbb R}^m$, that is, a convex, compact set with non-empty interior, 
whose covariance matrix ${\rm Cov}(K)$, given by
\begin{equation} \label{def:cov-matrix} {\rm Cov}(K)_{i,j} : = 
\frac{\int_K x_ix_j\, dx}{\int_K {\bf 1}\,dx}\ - \ \frac{\int_K x_i\,dx}{\int_K {\bf 1}\,dx}\,\frac{\int_K x_j\,dx}{\int_K {\bf 1}\,dx}
\quad\quad \hbox{for}\ 1\ls i, j\ls m,
\end{equation}
has small condition number, the variance conjecture is a statement that most of the mass of $K$
will be found in an annulus of width much smaller than its average radius, a ``thin shell''
(see the \emph{$\varepsilon$-Concentration Hypothesis} of Anttila, Ball and Perissinaki \cite{Anttila-Ball-Perissinaki-2003},
or the quantitatively stronger statement \eqref{eq:var-conj} below suggested by Bobkov and Koldobsky 
\cite{Bobkov-Koldobsky-2003}).
Supposing first for simplicity 
that $K$ has Lebesgue volume 1, 
barycentre at the origin, 
and that $K$ is \emph{isotropic}, that is, ${\rm Cov}(K)$ is a multiple of the identity matrix, 
the conjecture can be stated as asking that
\begin{equation} \label{eq:var-conj}
{\rm Var}_K\bigl(\|x\|_2^2\bigr): = \int_K\|x\|_2^4\,dx - \left(\int_K\|x\|_2^2\,dx\right)^2 \lesssim \frac{1}{m} \left(\int_K\|x\|_2^2\,dx\right)^2\, ,
\end{equation}
where $\|\cdot\|_2$ stands for the Euclidean norm on ${\mathbb R}^m$, 
and `$\lesssim$' implies a multiplicative constant that should not depend on the dimension $m$ or the body $K$.

Although stated separately and with different motivations initially,
inequality \eqref{eq:var-conj} is a special case  
of the KLS conjecture
(put forth by Kannan, Lovász and Simonovits \cite{KLS-1995})
when the latter is equivalently reformulated as a Poincaré inequality for convex bodies
(the equivalence following by works of Maz'ya, Cheeger, Buser and Ledoux):
according to this, 
given a convex body 
$K\subset {\mathbb R}^m$ 
of volume 1 with barycentre at the origin, and any (locally) Lipschitz function 
$f: {\mathbb R}^m \to {\mathbb R}$, 
we should have
\begin{equation} \label{eq:Poin-conj} 
{\rm Var}_K\bigl(f\bigr)\, \lesssim \,
s_{\rm max}\bigl[{\rm Cov}(K)\bigr]\cdot \int_K\big\|\nabla f(x)\big\|_2^2\,dx\, , 
\end{equation}
where $s_{\rm max}\bigl[{\rm Cov}(K)\bigr]$ denotes the largest singular value of the covariance matrix of $K$. 
To see the connection, observe that when ${\rm Cov}(K)$ is a multiple of the identity matrix,  
we have
\begin{equation}\label{eq:max-avg-s-val} s_{\rm max}\bigl[{\rm Cov}(K)\bigr] = \frac{1}{m}{\rm tr}\bigl[{\rm Cov}(K)\bigr]
= \frac{1}{m} \int_K\|x\|_2^2\,dx.
\end{equation}
Of course, with the KLS conjecture in mind,
it makes sense to ask about the validity of a suitably modified inequality \eqref{eq:var-conj}
even when ${\rm Cov}(K)$ is not a multiple of the identity, 
and when \eqref{eq:max-avg-s-val} is not true even approximately
(or we don't know a priori whether it is).

\begin{conjecture}(``Generalised Variance Conjecture'') \label{conj:gen-var}
There is an absolute constant $C$ such that, 
given any convex body $K\subset {\mathbb R}^m$ of volume 1 with barycentre at the origin, one has
\begin{equation} \label{eq:gen-var-conj} {\rm Var}_K\bigl(\|x\|_2^2\bigr) \ls C\cdot  s_{\rm max}\bigl[{\rm Cov}(K)\bigr] \int_K\|x\|_2^2\,dx.\end{equation}
\begin{remark}
The assumption that $K$ has volume 1 is merely for convenience: if we don't make it, integration above is understood
instead with respect to the density ${\bf 1}_K(x)/ (\int_K {\bf 1}\,dx)$. 
\end{remark}
\end{conjecture}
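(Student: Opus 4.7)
My plan is to settle the specific instances of the conjecture announced in the abstract, where $K$ is the unit ball of the operator norm in one of the classical matrix subspaces (real symmetric, Hermitian, and the spaces of all real or complex square matrices), rather than attack the conjecture in full generality, which would imply the KLS conjecture and is out of reach with current techniques.

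The first step is to exploit the rich symmetry of $K$. In every case the body is invariant under conjugation by a compact group (orthogonal or unitary, with the appropriate left--right action on the non-self-adjoint spaces). This forces ${\rm Cov}(K)$ to be block-diagonal, and in fact its distinct spectral values reduce to only one or two parameters (essentially the variance of a diagonal matrix entry versus that of an off-diagonal one). Consequently both $s_{\rm max}\bigl[{\rm Cov}(K)\bigr]$ and the trace $\int_K\|x\|_2^2\,dx = {\rm tr}\bigl[{\rm Cov}(K)\bigr]$ can be extracted from integrals in the eigenvalue or singular value variables via the Weyl integration formula, whose Jacobian contributes a Vandermonde factor $|\Delta(\lambda)|^\beta$ with $\beta\in\{1,2,4\}$ (plus a suitable power-of-singular-values factor in the non-self-adjoint cases), and whose domain, under the operator-norm constraint, collapses to $[-1,1]^n$ or $[0,1]^n$.

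At this stage the problem reduces to a family of moment computations for a $\beta$-ensemble on $[-1,1]^n$ with density proportional to $\prod_{i<j}|\lambda_i-\lambda_j|^\beta$: to control the left-hand side of \eqref{eq:gen-var-conj} one needs a sharp bound on the variance of the linear statistic $\sum_i \lambda_i^2$ under this measure, while single-variable analogues of the same computation deliver $s_{\rm max}\bigl[{\rm Cov}(K)\bigr]$ and $\int_K\|x\|_2^2\,dx$ on the right. Expanding the square and grouping terms, the variance splits into a diagonal contribution $\sum_i{\rm Var}(\lambda_i^2)$ and a correlation sum $\sum_{i\ne j}{\rm Cov}(\lambda_i^2,\lambda_j^2)$, both of which are, up to polynomial prefactors, accessible via identities of Selberg, Aomoto, and Dixon--Anderson type; the title of the paper indicates that such Selberg-type formulas are precisely the tool to be deployed.

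The hard part, I expect, is the correlation sum. The naive triangle-inequality estimate on ${\rm Var}(\sum_i\lambda_i^2)$ is off by a factor of $n$, so one must cash in the strong negative correlation forced by the Vandermonde factor. Concretely, I would first handle $\beta = 2$, where orthogonal-polynomial and determinantal machinery yields clean closed forms, and then transfer the result to $\beta = 1, 4$ via classical duality relations or by direct manipulation of Selberg integrals with shifted parameters. As a by-product, comparing the diagonal- with the off-diagonal-entry variances computed in the symmetry-reduction step should also yield the announced fact that $K$ is almost-isotropic but not isotropic in the real symmetric and Hermitian cases, since the isotropy defect is encoded precisely in the ratio of those two variances.
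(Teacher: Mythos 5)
You correctly identify that the statement is a conjecture and that the paper's goal is to verify it for the unit balls of the operator norm, via reduction to eigenvalue/singular-value integrals with a Vandermonde weight and then Selberg-type formulas. That broad outline matches the paper. But there are two concrete gaps in the proposed route.

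First, your toolbox is not strong enough for the self-adjoint cases. Aomoto's extension of Selberg handles integrands that are elementary symmetric functions (and minor generalisations of the form $\prod_i t_i \prod_j (1-t_j)$), which suffices for $E = {\cal M}_n(\mathbb{F})$ after the substitution $x_i \mapsto \sqrt{x_i}$ because the required integrands become $e_1$, $e_2$ and $e_1(t)(1-t_1)$. For the self-adjoint spaces, after shifting $[-1,1]^n$ to $[0,1]^n$ one needs integrals of $m_{(2)}, m_{(2^2)}, m_{(2,1)}, m_{(3)}, m_{(4)}$, etc.\ against the Selberg density, and these do not reduce to Aomoto's family. The paper's key technical ingredient is Kadell's integration formula for Jack polynomials $P_\lambda^{1/\kappa}$, together with explicit transition matrices between the monomial and Jack bases at $\kappa = 1, 1/2, 2$; that is what actually makes the degree-$4$ moment computations close. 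Your alternative — do $\beta = 2$ by determinantal machinery and transfer to $\beta = 1, 4$ by duality — is genuinely different, and while Jack-polynomial duality $\kappa \leftrightarrow 1/\kappa$ does exist (it would relate $\beta = 1$ to $\beta = 4$, for instance, not to $\beta = 2$), you have not indicated how it would produce the precise low-order moment asymptotics with the correct constants, nor how it handles $\beta = 2$ on an equal footing. As written this step is a sketch, not a proof.

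Second, you propose to extract $s_{\max}[{\rm Cov}(K)]$ ``from integrals in the eigenvalue or singular value variables via the Weyl integration formula.'' This is not possible: the covariance of an individual matrix entry $T_{ij}$ is not a symmetric function of the eigenvalues alone, so the Weyl/invariance reduction does not apply to it. The paper has to bring in a separate tool, the Weingarten calculus for conjugate-invariant ensembles (Collins--Matsumoto--Saad), to relate $\int T_{ij}T_{kl}$ to trace moments, and this is needed for the almost-isotropicity statement. For the variance bound itself, however, the paper \emph{avoids} computing $s_{\max}$ entirely: it proves the stronger estimate $\sigma_{B_E}^2 := d_n\,{\rm Var}(\|T\|_2^2)/(\int \|T\|_2^2)^2 \simeq 1$ and then invokes the trivial inequality $\tfrac{1}{d_n}{\rm tr}[{\rm Cov}(\overline{B_E})] \ls s_{\max}[{\rm Cov}(\overline{B_E})]$. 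Your plan does not mention this shortcut, and without it the variance bound would appear to depend on first establishing near-isotropicity, which introduces a circularity you would need to untangle.
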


In this note we verify this conjecture for the unit ball of the operator norm 
on several classical subspaces of square matrices.

Before we turn to particulars, let us recall that, despite the fact
that Conjecture 1, or its more restricted version for isotropic convex bodies only,
seem like very special cases of the KLS conjecture, they are in fact almost
equivalent reformulations of it:
according to a surprising result by Eldan \cite{Eldan-2013}, whatever estimates
one obtains for the constant $C$ appearing in \eqref{eq:gen-var-conj} (\emph{for all} centred convex bodies),
or even just for inequality \eqref{eq:var-conj} (\emph{for all} isotropic convex bodies),
the same estimates (up to a multiplicative logarithmic factor in the dimension $m$) will also be valid for the implied constant in \eqref{eq:Poin-conj}.
Estimates for the constant $C = C(m)$ in \eqref{eq:var-conj} depending on the dimension have been obtained by Klartag \cite{Klartag-2007a},
\cite{Klartag-2007b}, by Fleury, Guédon and Paouris \cite{Fleury-Guedon-Paouris-2007}, Fleury \cite{Fleury-2010b},
and by Guédon and Milman \cite{Guedon-Milman-2011} 
(moreover, prior to Eldan's result, estimates for the implied constant in the Poincaré inequality \eqref{eq:Poin-conj}
had been obtained by Kannan, Lovász and Simonovits \cite{KLS-1995} and by Bobkov \cite{Bobkov-2007}).
A recent improvement to all these is given by Lee and Vempala \cite{Lee-Vempala-2016},
who established inequality \eqref{eq:Poin-conj} with $C(m) = O(\sqrt{m})$. 

As far as specific cases of convex bodies are concerned, inequality \eqref{eq:var-conj}
has been established (optimally) for the unit balls of the $\ell_p$ norms by Ball
and Perissinaki \cite{Ball-Perissinaki-1998}, for isotropic unconditional convex bodies
by Klartag \cite{Klartag-2009}, and, via extending Klartag's method in \cite{Klartag-2009},
 by Barthe and Cordero-Erausquin \cite{Barthe-Cordero-2013} 
for isotropic (or almost isotropic) convex bodies that have many symmetries 
(maybe fewer than those of an unconditional body, but still enough; one such example is the simplex, 
or any other convex body which has the symmetries of the simplex).
Furthermore, Conjecture 1 has been verified by Alonso-Gutiérrez and Bastero \cite{Alonso-Bastero-2016}
for hyperplane projections of the unit balls of the $\ell_p$ norms.

For background on and further results related to these conjectures, we refer the reader to the books
\cite{Alonso-Bastero-book} and \cite{BGVV-book}.

\medskip

We now state the main result of this note.
Let ${\cal M}_n({\mathbb F})$ denote the space 
of all $n\times n$ matrices with entries from the division algebra ${\mathbb F}$,
which stands either for ${\mathbb R}$ or ${\mathbb C}$ or the skew field ${\mathbb H}$ of quaternions 
(note that in all cases we view ${\cal M}_n({\mathbb F})$ as a real vector space,
which can thus be thought of as ${\mathbb R}^m$ where $m = \beta n^2$ with $\beta = 1, 2$ or $4$ respectively). 
For a matrix $T\in {\cal M}_n({\mathbb F})$ and $p\gs 1$, the $p$-Schatten norm of $T$ is given by
\begin{equation*} \|T\|_{S_p^n} := \|s(T)\|_p = \left(\sum_{i=1}^n s_i(T)^p\right)^{1/p},\end{equation*}
where $s(T) = (s_1(T),\ldots, s_n(T))$ is the non-increasing rearrangement of the singular values of $T$, that is, of the eigenvalues of $(T^\ast T)^{1/2}$.
The limiting case of $p = \infty$ is defined in the usual way: $\|T\|_{S_\infty^n} : = \|s(T)\|_\infty = s_{\rm max}(T)$
is the \emph{operator} or \emph{spectral} norm of $T$. 
Also, the Euclidean norm $\|\cdot\|_2$ on ${\cal M}_n({\mathbb F})$ coincides with the 
2-Schatten norm $\|\cdot\|_{S_2^n}$, also known as the \emph{Hilbert-Schmidt} or \emph{Frobenius} norm.

We will focus on establishing Conjecture \ref{conj:gen-var} when $K$ is 
the unit ball of $S_\infty^n$ on either of the spaces ${\cal M}_n({\mathbb F})$, 
or moreover on its classical subspace of ${\mathbb F}$-self-adjoint matrices.

\begin{theorem}\label{thm:var-operator}
Let ${\mathbb F}$ stand for either ${\mathbb R}$ or ${\mathbb C}$ or ${\mathbb H}$,
and let $E = {\cal M}_n({\mathbb F})$ or the subspace of ${\mathbb F}$-self-adjoint matrices.
Set $d_n = {\rm dim}(E)$, and write $B_E$ for the unit ball of $\|\cdot\|_{S_\infty^n}$ on $E$, 
and $\overline{B_E}$ for its homothetic copy of volume $1$, that is, $\overline{B_E} := \frac{B_E}{[{\rm vol}(B_E)]^{1/{d_n}}}$. 
Then there are absolute constants $C_1>0, C_2$ so that
\begin{equation} \label{eq:thm:var-operator}
C_1 \ls \sigma_{B_E}^2 := d_n\,\frac{{\rm Var}_{\overline{B_E}}\bigl(\|T\|_{S_2^n}^2\bigr)}{\left(\displaystyle\int_{\overline{B_E}} \|T\|_{S_2^n}^2\,dT\right)^2} \ls C_2.
\end{equation}
\end{theorem}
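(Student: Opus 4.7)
The plan is to reduce the problem to an explicit evaluation of Selberg-type integrals via the Weyl integration formula and then to extract the asymptotics. For the subspace of ${\mathbb F}$-self-adjoint matrices, every $T\in B_E$ is conjugate to a diagonal matrix with entries in $[-1,1]$, so by Weyl integration
$$\int_{B_E} g(\lambda(T))\,dT \ = \ c^{\rm SA}_{n,\beta}\int_{[-1,1]^n}g(\lambda)\prod_{i<j}|\lambda_i-\lambda_j|^\beta\,d\lambda,$$
with $\beta = 1, 2, 4$ for ${\mathbb F} = {\mathbb R}, {\mathbb C}, {\mathbb H}$. For $E = {\cal M}_n({\mathbb F})$ one applies the singular value decomposition $T = U\Sigma V^\ast$ to obtain an analogous formula on $[0,1]^n$ with density proportional to $\prod_{i<j}|s_i^2-s_j^2|^\beta\prod_i s_i^{\gamma}$ for an appropriate exponent $\gamma$ depending on $\beta$. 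Since $\|T\|_{S_2^n}^2 = \sum_i\lambda_i(T)^2$ (resp. $\sum_i s_i(T)^2$) is a symmetric function of the eigenvalues (resp.\ singular values), the ratio $\sigma_{B_E}^2$ is determined entirely by symmetric integrals against a Jacobi-type $\beta$-ensemble.

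The second step is to expand $\bigl(\sum_i\lambda_i^2\bigr)^k$ for $k=1,2$ into symmetric monomials and evaluate each resulting integral via Aomoto's (and if needed Kadell's) extensions of Selberg's integral. This would produce closed-form expressions, as ratios of Gamma functions, for ${\rm vol}(B_E)$, $\int_{B_E}\|T\|_{S_2^n}^2\,dT$, and $\int_{B_E}\|T\|_{S_2^n}^4\,dT$, purely in terms of $n$ and $\beta$.

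The third step is asymptotic analysis. The heuristic picture is that $\int_{\overline{B_E}}\|T\|_{S_2^n}^2\,dT\asymp n$ (of order $\sqrt{d_n}$), while ${\rm Var}_{\overline{B_E}}\bigl(\|T\|_{S_2^n}^2\bigr)\asymp 1$, the latter being the familiar bounded-fluctuation behavior for smooth linear statistics of $\beta$-ensembles, driven by the logarithmic repulsion $\prod_{i<j}|\lambda_i-\lambda_j|^\beta$. Combined with $d_n\asymp n^2$, this yields $\sigma_{B_E}^2\asymp 1$ on both sides.

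The main obstacle is the upper bound: each of $\int_{B_E}\|T\|_{S_2^n}^4\,dT/{\rm vol}(B_E)$ and $\bigl(\int_{B_E}\|T\|_{S_2^n}^2\,dT/{\rm vol}(B_E)\bigr)^2$ is of order $n^4$, yet their difference must come out as $O(1)$, which demands a delicate cancellation between leading terms in the $\Gamma$-function expressions. A cleaner way to expose the cancellation is to introduce an auxiliary parameter $\alpha$ and consider the weighted master integral
$$Z(\alpha)\ =\ \int_{[-1,1]^n}\prod_i(1-\lambda_i^2)^\alpha\prod_{i<j}|\lambda_i-\lambda_j|^\beta\,d\lambda,$$
which falls directly under the Selberg formula; the first and second derivatives of $\log Z$ at $\alpha = 0$ then package $\int\|T\|_{S_2^n}^2\,dT$ and ${\rm Var}\bigl(\|T\|_{S_2^n}^2\bigr)$ respectively into cumulant form, so the leading terms cancel automatically and the asymptotic order of the variance can be read off from $\partial_\alpha^2 \log Z(0)$. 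The lower bound $\sigma_{B_E}^2\gs C_1$ should, by contrast, follow from the leading coefficient in the same expansion, or alternatively from a Paley--Zygmund-type estimate using a single well-chosen event on the spectrum.
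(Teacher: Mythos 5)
The reduction to a Selberg-type integral, the role of Aomoto/Kadell, and the diagnosis that the bound hinges on a delicate cancellation between two $O(n^4)$ quantities --- all of this agrees with the paper, which carries out exactly that program (Proposition~\ref{prop:var-reduction}, tables \eqref{conversion-table1}--\eqref{conversion-table2}, and formula \eqref{eq:Kadell-integration-formula}).

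The gap is in your auxiliary-parameter device. The weighted integral you propose,
\[
Z(\alpha)=\int_{[-1,1]^n}\prod_i(1-\lambda_i^2)^\alpha\prod_{i<j}|\lambda_i-\lambda_j|^\beta\,d\lambda,
\]
does fall under Selberg's formula (after $\lambda_i=1-2t_i$ it just shifts $u\to u+\alpha$, $w\to w+\alpha$), but its log-derivatives at $\alpha=0$ are cumulants of the linear statistic $\sum_i\log(1-\lambda_i^2)$, \emph{not} of $\|T\|_{S_2^n}^2=\sum_i\lambda_i^2$. Differentiating $(1-\lambda_i^2)^\alpha=e^{\alpha\log(1-\lambda_i^2)}$ at $\alpha=0$ produces $\log(1-\lambda_i^2)$, and since the spectrum of a matrix on the boundary of $B_E$ reaches $\pm1$, $\log(1-\lambda_i^2)$ is not comparable to $-\lambda_i^2$: the two statistics have different means and, crucially, you have no control relating their variances. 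So the claimed ``automatic'' cancellation never occurs for the quantity you actually need. To get a cumulant-generating function for $\sum_i\lambda_i^2$ you would want a weight like $e^{\alpha\sum_i\lambda_i^2}$, but that is no longer a Selberg integral and has no closed form. The paper sidesteps this by computing $J_\infty(m_\lambda)$ for all $|\lambda|\le 4$ via Kadell's formula and extracting the $O(1)$ term by explicit expansion to order $1/n^2$; that bookkeeping is unavoidable with this method, and your proposal does not supply a substitute for it. The Paley--Zygmund suggestion for the lower bound is likewise only a pointer, not an argument: the same explicit expansions give a matching lower bound directly (e.g.\ $1/32 + O(1/n)$ for $\beta=2$), and nothing of that precision is derivable from the $Z(\alpha)$ you wrote down.
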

\begin{remark}
Obviously this implies Conjecture \ref{conj:gen-var} for the (normalised) unit ball $\overline{B_E}$ of the operator norm on $E$
since we always have $\frac{1}{d_n} \int_{\overline{B_E}} \|T\|_{S_2^n}^2\,dT = \frac{1}{d_n} {\rm tr}\bigl[{\rm Cov}\bigl(\overline{B_E}\bigr)\bigr] \ls 
s_{\rm max}\bigl[{\rm Cov} \bigl(\overline{B_E}\bigr)\bigr] $.
\end{remark}

For most of the cases of $E$ mentioned above these estimates were also established 
in \cite{Radke-V-2016} (with somewhat similar methods as we will see);
however, for the subspaces of symmetric (or real self-adjoint) matrices,
and of quaternionic self-adjoint matrices, the result is new.

The previous best result in the case of $E = {\cal M}_n({\mathbb F})$ 
followed from the method of Barthe and Cordero-Erausquin in \cite{Barthe-Cordero-2013}: 
they showed that the unit ball of the operator norm  
has sufficiently many symmetries for us to conclude
that $\sigma_{B_E}^2 = O(n) = O\bigl(\sqrt{{\rm dim}(E)}\bigr)$;
in fact the same was shown true for the unit balls in ${\cal M}_n({\mathbb F})$ of all the other $p$-Schatten norms. 
This upper bound now also follows from \cite{Lee-Vempala-2016}. 

Note, however, that it has been unclear whether either approach implies the same bound 
in the subspaces of self-adjoint matrices given that it wasn't known (to the best of our knowledge) 
if the condition number of the covariance matrix of $B_E$ in such a subspace $E$ is small 
(similarly this appears not to be known for any other $p$-Schatten norm besides $p=2$).
In this note we show this condition number
to be small, at least when $E$ consists of the real or complex self-adjoint matrices (see Theorem \ref{thm:almost-isotropic} below). 
Observe nevertheless that the estimates in \eqref{eq:thm:var-operator} 
are established regardless of that.

\bigskip

The starting point here, as well as for the arguments in \cite{Radke-V-2016},
is the fact that the uniform distribution on $\overline{B_E}$ defines
an \emph{invariant} ensemble of `random' matrices from $E$:
the distribution remains the same under multiplication by
an ${\mathbb F}$-unitary matrix (by which we understand either multiplication from left or from right
when $E = {\cal M}_n({\mathbb F})$, or conjugation by the matrix when 
$\overline{B_E}$ 
contains only ${\mathbb F}$-self-adjoint matrices).
Equivalently, the distribution depends only 
on the non-increasing rearrangement of the singular values $s_i(T)$ of $T\in E$ when $E = {\cal M}_n({\mathbb F})$, 
or of the eigenvalues $e_i(T)$ of $T\in E$ when $E$ consists of the ${\mathbb F}$-self-adjoint matrices.
As a consequence the integrals in \eqref{eq:thm:var-operator} which we wish to estimate,
given also that the integrands depend only on the singular values of $T$, can be reduced to 
integrals of highly symmetric distributions over ${\mathbb R}^n$ 
(see Lemma \ref{lem:RMT-reduction} and Proposition \ref{prop:var-reduction}).

It follows that to estimate $\sigma_{B_E}^2$,
it is completely equivalent to obtain estimates for the variance of the Euclidean norm with respect to the density
\begin{equation*}  
x=(x_1,\ldots,x_n)\in {\mathbb R}^n \quad \mapsto \quad {\bf 1}_{[-1,1]^n}(x) \prod_{1\ls i<j\ls n}\big|\,x_i^a - x_j^a\big|^b\cdot \prod_{1\ls i\ls n}|x_i|^c\,dx,\end{equation*}
where $a,b,c$ are integers depending only on $E$ ($a\in \{1,2\}$, $b=\beta = {\rm dim}_{{\mathbb R}}({\mathbb F})$, and $c\in \{0,\beta-1\}$).
This requires us to study integrals of the form
\begin{equation} \label{eq:Selberg-type-int-1} 
\int_{-1}^1\int_{-1}^1\cdots \int_{-1}^1 s(x)\,\prod_{1\ls i<j\ls n}\big|\,x_i^a - x_j^a\big|^b\cdot \prod_{1\ls i\ls n}|x_i|^c\,dx_n\ldots dx_2dx_1\end{equation}
where $a=1$ or 2, and where the integrand $s(x)$ is a symmetric polynomial 
(in this case we will have $s(x) = \sum_i x_i^k$ with $k=2$ or 4, or $s(x)=\sum_{i<j}x_i^2 x_j^2$).

\medskip

With suitable changes of variables, all such integrals can be related to integrals of a similar form:
\begin{equation}\label{eq:Selberg-type-int-2} \int_0^1\int_0^1\cdots \int_0^1 \tilde{s}(t) \prod_{1\ls i\ls n} t_i^{u-1} (1-t_i)^{w-1}\,
\prod_{1\ls i<j\ls n}\big|\,t_i - t_j\big|^{2\kappa}\,dt_n\ldots dt_2dt_1\end{equation}
where again $\tilde{s}(t)$ is a symmetric polynomial, and where $u > 0$, $w>0$ and $\kappa \gs 0$
(we can even think of $u, w,\kappa$ as complex numbers, 
with the inequalities-constraints then holding for their real part).
Selberg \cite{Selberg-1944} was the first to study such a family of integrals in the case where $\tilde{s}(t) = {\bf 1}$
(using crucially the fact that the change of variables $t_i \mapsto 1-t_i$ leaves the integrals in this family unchanged),
and he showed that each of them equals a certain product of Gamma factors (that is, of values of the Gamma function)
whose inputs depend only linearly on $u, w$ and $\kappa$ in a pre-specified manner:
\begin{multline} \label{eq:Selberg-formula}
I_0(n; u, w, \kappa) : = \int_0^1\int_0^1\cdots \int_0^1 \prod_{1\ls i\ls n} t_i^{u-1} (1-t_i)^{w-1}\,
\prod_{1\ls i<j\ls n}\big|\,t_i - t_j\big|^{2\kappa}\ dt_n\ldots dt_2dt_1
\\
= \prod_{1\ls i \ls n}\frac{\Gamma\bigl(1+(n-i+1)\kappa\bigr)}{\Gamma(1+\kappa)} 
\ \prod_{1\ls i\ls n}\frac{\Gamma\bigl(u + (n-i)\kappa\bigr)\, \Gamma\bigl(w + (n-i)\kappa\bigr)}{\Gamma\bigl(u+w + (2n-i-1)\kappa\bigr)}.
\end{multline} 

Aomoto \cite{Aomoto-1987}, and then Kadell \cite{Kadell-1997}, 
the latter confirming a conjecture by Macdonald \cite[Conjecture (C5)]{Macdonald-1987}, 
have generalised this result by 
establishing completely analogous `closed-form' expressions for the corresponding integrals 
when $\tilde{s}(t)$ ranges in different families of non-constant symmetric polynomials.
In fact, Kadell's  
result encompasses all the previous results since the family of polynomials $\tilde{s}(t)$
which he proves one can consider contains the family of Jack symmetric polynomials
(under a standard normalisation) and therefore spans the space of symmetric polynomials
(see Subsection \ref{subsec:SAK-results} for definitions and specifics;
also, for other proofs of Kadell's result, see Kaneko \cite{Kaneko-1993},
Baker and Forrester \cite{Baker-Forrester-1998} (see also \cite{Forrester-Warnaar-2008}
for a streamlined sketch of this proof), and Warnaar \cite{Warnaar-2008}).

\smallskip

In Section \ref{sec:main-proof} we show how to use Aomoto's result (as well as an immediate extension of it)
in order to recover the conclusion of Theorem \ref{thm:var-operator} when $E = {\cal M}_n({\mathbb F})$,
and furthermore how to use Kadell's  
more general result to obtain Theorem \ref{thm:var-operator} for the subspaces of self-adjoint matrices too.

\bigskip

The estimates we obtain
for integrals of the form \eqref{eq:Selberg-type-int-1} allow us to also
deal with the question of what the covariance matrix of $\overline{B_E}$ is when $E$ is one of the subspaces of self-adjoint matrices.
Note that in the cases of the spaces ${\cal M}_n({\mathbb F})$ it is not difficult to see 
that simply the symmetries/invariances of the respective unit balls 
$B_{{\cal M}_n({\mathbb F})}$ (and similarly of the unit balls of all other $p$-Schatten norms) guarantee these bodies are isotropic
(see e.g. \cite[Proposition 26]{Radke-V-2016}); however in the case of the subspaces of self-adjoint matrices the symmetries are no longer enough for a similar conclusion.

Let us observe that, since $\overline{B_E}$ has volume 1 and the origin as a centre of symmetry, 
computing the entries of the covariance matrix as in \eqref{def:cov-matrix} reduces essentially to computing integrals of the form
\begin{equation} \label{eq:covariance-matrix-entries}
\int_{\overline{B_E}} |T_{i,j}|^2\,dT,\quad 1\ls i,j\ls n, \qquad\hbox{as well as}\quad  \int_{\overline{B_E}} T_{i,j}T_{l,k}\,dT\quad\hbox{for}\ (i,j)\neq (l,k).
\end{equation}
This is made possible through the Weingarten calculus which allows to estimate integrals of polynomial functions of the entries of a random matrix
belonging to several important types of matrix ensembles by relating them to integrals of symmetric functions of the eigenvalues of these matrices: 
for our setting we need a result of Collins, Matsumoto and Saad \cite{Collins-Matsumoto-Saad-2014} for conjugate invariant ensembles of self-adjoint matrices 
with real or complex entries (see Subsection \ref{subsec:Weingarten} for details). 

The estimates we obtain are summarised in the following theorem, 
and show that $\overline{B_E}$ is almost isotropic when $E$ is the subspace of symmetric matrices, 
or the subspace of Hermitian matrices (see \ref{sec:almost-isotropic} for the details 
and more precise estimates including constants).

\begin{theorem}\label{thm:almost-isotropic}
Let $E$ be the subspace of $\ {\mathbb F}$-self-adjoint matrices with $\,{\mathbb F} = {\mathbb R}$ 
or $\,{\mathbb C}$. Then all integrals of the first form in \eqref{eq:covariance-matrix-entries}
are of the order of 1, while all integrals of the second form are zero except when $i=j\neq l=k$.
In fact, when $\,{\mathbb F} = {\mathbb C}$ and, say, $i\neq j$, we also have
\begin{gather*}
\int_{\overline{B_E}}\Re(T_{i,j})\Re(T_{l,k})\,dT = \int_{\overline{B_E}} \Im(T_{i,j})\Im(T_{l,k})\,dT
= \int_{\overline{B_E}} \Re(T_{i,j})\Im(T_{l,k})\,dT = 0,
\\
\hbox{as well as}\quad \int_{\overline{B_E}} \Re(T_{i,j})\Im(T_{i,j})\,dT = 0.
\end{gather*} 

On the other hand, when $i=j\neq l=k$, we have
\begin{equation*}
\int_{\overline{B_E}} T_{i,i}T_{k,k}\,dT \simeq -\frac{1}{n}.
\end{equation*}
\end{theorem}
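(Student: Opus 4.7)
The plan is to exploit the conjugation invariance of $\overline{B_E}$: the uniform law on $\overline{B_E}$ can be written as $T = U\Lambda U^\ast$ with $U$ Haar-distributed on $O(n)$ (when ${\mathbb F} = {\mathbb R}$) or $U(n)$ (when ${\mathbb F}={\mathbb C}$), independent of the diagonal matrix $\Lambda = {\rm diag}(e_1,\ldots,e_n)$ of eigenvalues of $T$. Under this decomposition, each integral in \eqref{eq:covariance-matrix-entries} becomes a Haar average of a degree-$2$ monomial in the entries of $U,\overline U$ (computable by the Weingarten calculus of Collins--Matsumoto--Saad, cited in Subsection~\ref{subsec:Weingarten}) multiplied by the expectation of a symmetric polynomial in $e_1,\ldots,e_n$, which in turn is a Selberg-type integral of the form \eqref{eq:Selberg-type-int-1} estimated by the methods of Section~\ref{sec:main-proof}.

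First I would dispose of the vanishing assertions via symmetry alone. The law of $T$ is invariant under conjugation by any diagonal $\pm 1$ matrix $D = {\rm diag}(\epsilon_1,\ldots,\epsilon_n)$, which sends $T_{i,j}$ to $\epsilon_i\epsilon_j T_{i,j}$; averaging over the $2^n$ sign choices forces $\int_{\overline{B_E}} T_{i,j} T_{l,k}\, dT = 0$ unless each index $1,\ldots,n$ occurs an even number of times in the multiset $\{i,j,l,k\}$. Combined with the exclusion $(l,k) \neq (i,j)$, this leaves only the cases $(l,k) = (j,i)$ with $i\neq j$ (which belongs to the first form $\int |T_{i,j}|^2\,dT$) and the diagonal--diagonal case $i=j\neq l=k$, giving the claimed vanishing for the second form. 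In the complex setting, $T$ is additionally invariant under conjugation by any diagonal unitary $V = {\rm diag}(e^{i\phi_1},\ldots,e^{i\phi_n})$, which fixes $T_{i,i}$ and sends $T_{i,j}$ (for $i\neq j$) to $e^{i(\phi_i-\phi_j)} T_{i,j}$; averaging over the phases $\phi_m$ annihilates every listed real--imaginary cross-correlation, including $\int_{\overline{B_E}}\Re(T_{i,j})\Im(T_{i,j})\,dT$.

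Next comes the quantitative part. Set $A := \int_{\overline{B_E}} \|T\|_{S_2^n}^2\, dT$ and $B := \int_{\overline{B_E}} ({\rm tr}\, T)^2\,dT$, which are the integrals of $\sum_m e_m^2$ and $(\sum_m e_m)^2$ respectively. A direct application of the Weingarten formula for $U(n)$ yields the closed forms
\[
\int_{\overline{B_E}} T_{i,i}^2\, dT = \tfrac{1}{n(n+1)}(A + B), \quad \int_{\overline{B_E}} T_{i,i} T_{k,k}\, dT = -\tfrac{1}{n(n^2-1)}\,A + \tfrac{1}{n^2-1}\,B \quad (i\neq k),
\]
with entirely analogous formulas for $\int|T_{i,j}|^2\,dT$ and for the real symmetric case. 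The Selberg--Kadell estimates in Section~\ref{sec:main-proof} give $A \simeq n^2$ (a fact essentially built into the proof of Theorem~\ref{thm:var-operator}); combined with the above, this immediately delivers the order-$1$ estimates for the first batch of statements, and reduces the target $-1/n$ estimate to showing that $B = o(n)$.

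The main obstacle is precisely this last bound on $B$. A crude estimate gives only $B = O(n^2)$; if $B$ were of order exceeding $n$ it would swamp the negative leading term and destroy both the sign and the magnitude of the $-1/n$ conclusion. Establishing $B = O(1)$ amounts, via the identity $B = A + \int_{\overline{B_E}} \sum_{m\neq p} e_m e_p\,dT$, to computing the two-point moment $\int\sum_{m\neq p}e_m e_p$ using the Aomoto/Kadell generalisation of Selberg's formula and exhibiting a near-exact cancellation with $A$; morally this reflects the strong eigenvalue repulsion encoded by the Vandermonde factor $\prod_{i<j}|e_i-e_j|^\beta$, the same mechanism that produces the well-known $O(1)$ fluctuations of linear statistics in classical random-matrix ensembles. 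Once this is secured, the displayed formulas immediately yield $\int T_{i,i}^2 \simeq \int|T_{i,j}|^2\simeq 1$ and $\int T_{i,i}T_{k,k}\simeq -1/n$, and the theorem follows.
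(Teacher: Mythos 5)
Your overall strategy matches the paper's: pass to a Haar--times--eigenvalue decomposition, use the Collins--Matsumoto--Saad conjugacy-invariant Weingarten formulas to reduce entry moments to trace moments, and evaluate the latter via the Selberg/Aomoto/Kadell machinery of Section~\ref{sec:main-proof}. Your elementary symmetry argument for the vanishing assertions (conjugation by diagonal $\pm1$ matrices, and additionally by diagonal phase unitaries in the complex case) is a clean alternative to the paper's derivation, which reads the vanishing off the Kronecker-delta structure in Theorem~\ref{thm:CMS-C-conj-inv}/\ref{thm:CMS-R-conj-inv}; both are correct. Your Weingarten formulas
\[
\int_{\overline{B_E}} T_{i,i}^2 = \tfrac{1}{n(n+1)}(A+B),\qquad
\int_{\overline{B_E}} T_{i,i}T_{k,k} = -\tfrac{1}{n(n^2-1)}A + \tfrac{1}{n^2-1}B
\]
are also correct.

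The gap is in the final step. You claim that the proof reduces to showing $B := \int_{\overline{B_E}}(\operatorname{Tr} T)^2\,dT = o(n)$, and then propose to establish $B = O(1)$ via eigenvalue repulsion. Both claims are false over $\overline{B_E}$, and the reduction itself is not valid. What is true (and is what your ``$O(1)$ fluctuations of linear statistics'' heuristic actually captures) is that $\int_{B_E}(\operatorname{Tr} T)^2\,dT / \operatorname{vol}(B_E) = \tfrac18 + O(1/n) = O(1)$ for the \emph{un}normalised ball, whose eigenvalues lie in $[-1,1]$. But $\overline{B_E}$ rescales each coordinate by $[\operatorname{vol}(B_E)]^{-1/n^2} \simeq \sqrt n$, so $B = B_{\overline{B_E}} = [\operatorname{vol}(B_E)]^{-2/n^2} \cdot B_{B_E} \simeq \tfrac{n}{8}$, which is $\Theta(n)$, not $o(n)$. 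In your formula this $B$-term contributes $\tfrac{B}{n^2-1} \simeq \tfrac{1}{8n}$, the \emph{same} order as the $A$-term $-\tfrac{A}{n(n^2-1)} \simeq -\tfrac{1}{4n}$, and the final answer $\simeq -\tfrac{1}{8n}$ is their difference. So the sign and magnitude of the conclusion genuinely hinge on knowing the leading constant of $B$, not just an $o(n)$ bound (had the constant been $\gs \tfrac14$, the covariance would be nonnegative and the theorem false). This is precisely why the paper records the two-term expansion $\tfrac{N_\infty(x_1x_2)}{N_\infty(1)} = -\tfrac{1}{4n} - \tfrac{1}{8n^2} - \cdots$ in Proposition~\ref{prop:C-trace-estimates} and feeds it, together with $\tfrac{N_\infty(x_1^2)}{N_\infty(1)} = \tfrac14 - \tfrac{1}{16n^2} + \cdots$, into the Weingarten formula to extract the cancellation $\tfrac14 - \tfrac14 = 0$ at leading order and isolate the $-\tfrac{1}{8n}$ subleading term. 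Your plan needs exactly this precision (which you would get from the same Selberg--Kadell computations), so the gap is more a mis-stated reduction than a wrong method; but as written, the argument does not close.
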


\smallskip

The rest of the paper is organised as follows. 
In Section \ref{sec:prelim} we give exact statements for all the abovementioned 
results that we need. 
Theorem \ref{thm:var-operator} and Theorem \ref{thm:almost-isotropic} are proven in Sections \ref{sec:main-proof} and \ref{sec:almost-isotropic} respectively.

We recall finally that Collins, Matsumoto and Saad deal in \cite{Collins-Matsumoto-Saad-2014} also with the case of left-right invariant ensembles
(which covers e.g. integration of polynomial functions over $B_{{\cal M}_n({\mathbb F})}$). 
In Section \ref{sec:neg-cor-prop}
we exploit this to add to and complete the conclusions from \cite{Radke-V-2016} concerning the question whether
the entries of $T\sim {\rm Unif}\bigl(B_{{\cal M}_n({\mathbb F})}\bigr)$ are negatively correlated in a certain sense
(for the precise definitions and statements see Section \ref{sec:neg-cor-prop}).

\section{Preliminaries and overview of key prior results}\label{sec:prelim}

We will denote by $\|\cdot\|_p$ the $\ell_p$ norm on ${\mathbb R}^n$ and by $B_p^n$ its unit ball,
namely $B_p^n = \bigl\{x\in {\mathbb R}^n: \|x\|_p = \sum_{i=1}^n|x_i|^p\ls 1\bigr\}$.

Let $S_n$ be the symmetric group of permutations of the elements
of $[n]:=\{1,2,\ldots,n\}$. We will say a function $F:{\mathbb R}^n \to {\mathbb R}$ is symmetric  
if $F(x_1,x_2,\ldots,x_n) = F(x_{\sigma(1)},x_{\sigma(2)},\ldots,x_{\sigma(n)})$ for every $\sigma \in S_n$.
Given $s\in {\mathbb R}$, we will say $F$ is $s$-homogeneous if, for every $t>0$, we have $F(tx) = t^sF(x)$.

Let $n$ be a positive integer. A \emph{partition} $\lambda$ of $n$ is a sequence of positive integers $(\lambda_1,\ldots,\lambda_m)$
such that $\lambda_1\gs\cdots \gs \lambda_m$ and $\sum_{i=1}^m \lambda_i = n$; in such a case we write $\lambda \vdash n$
or $|\lambda| = n$. 
The integers $\lambda_i$ are called the \emph{parts} of $\lambda$, and their total number is the \emph{length} of $\lambda$ and is denoted by $l(\lambda)$.
Sometimes we may need to consider sequences with a fixed number of terms, say $m_0$,
in which case we will think of all partitions $\lambda$ with $l(\lambda)\ls m_0$ as giving
such sequences once we annex to them a finite number of parts all equal to 0 as necessary
(in this case $l(\lambda)$ will just be the number of non-zero parts, 
and we can also speak of partitions of 0 all of whose parts are necessarily 0).

Given a partition $\lambda$, the \emph{monomial symmetric function} $m_\lambda({\bm t})$
in $n$ variables, where $n\gs l(\lambda)$, is given by 
\begin{equation*} m_\lambda(t_1,\ldots,t_n) = \frac{1}{|{\rm Stab}(\lambda)|}\,
\sum_{\sigma\in S_n}t_{\sigma(1)}^{\lambda_1} \cdots t_{\sigma(1)}^{\lambda_1}, \end{equation*} 
where $|{\rm Stab}(\lambda)|$ denotes the order of the stabiliser of any monomial of type $\lambda$ under the action of $S_n$
(and dividing by it ensures we add each monomial only once).
By convention, $m_\lambda(t_1,\ldots,t_n) = 0$ if $n< l(\lambda)$. Moreover, when $\lambda = (1,1,\ldots,1) = (1^k)$
for some $k\gs 1$, then we may also write $e_k({\bm t})$ instead of $m_{(1^k)}({\bm t})$ 
and call this the \emph{$k$-th elementary symmetric function}.

\smallskip

The letters $c,c^{\prime }, c_1, c_2$ etc. denote absolute positive
constants (which do not depend on the dimension of the Euclidean space we're in, or moreover 
on any of the other parameters unless specifically stated); their value may change from line to line. 
We will use the notation $A\simeq B$ (or $A\lesssim B$) to mean there exist absolute constants
$c_1,c_2>0$ such that $c_1A\leq B\leq c_2A$ (or $A\ls c_1B$). We will also use the Landau notation:
$A = O(B)$ has the same meaning as $A\lesssim B$, whereas $A = o(B)$ will mean 
the ratio $A/B$ tends to 0 as the dimension grows to infinity.

\medskip

Recall that the uniform distribution over the unit ball of any $p$-Schatten norm in ${\cal M}_n({\mathbb F})$
or its subspace of self-adjoint matrices defines an invariant ensemble of random matrices:
we will call this \emph{left-right invariant} ensemble if the distribution remains unchanged under multiplication either from the left or from the right 
by a fixed ${\mathbb F}$-unitary matrix (this is true in the case of ${\cal M}_n({\mathbb F})$),
and we will call it \emph{conjugate invariant} if the distribution remains unchanged under conjugation by an ${\mathbb F}$-unitary matrix
(this is true in the case of ${\mathbb F}$-self-adjoint matrices). 
Equivalently, the underlying distribution of a left-right invariant ensemble depends only on the distribution
of the non-increasing rearrangement of the singular values of the matrices, 
whereas that of a conjugate invariant ensemble depends only on (the non-increasing rearrangement of) the eigenvalues.

\subsection{Reduction to Selberg-type integrals}\label{subsec:prelim}

A consequence of left-right or conjugate invariance is that estimating integrals of functions that would also only depend on
the singular values or eigenvalues of a matrix $T$ in the ensemble, as for example the implied integrals in Theorem \ref{thm:var-operator},
can be reduced to computing integrals of highly symmetric distributions over ${\mathbb R}^n$ 
(for which there may be more, analytic or combinatorial, tools to use).  
In fact, if we consider the same question for unit balls of the other $p$-Schatten norms,
then (given that the integrands we are interested in, namely powers of the Euclidean norm, are also homogeneous functions) we can equivalently try to estimate  
the corresponding integrals with respect to densities of the form $\exp\bigl(-\|T\|_{S_p^n}^p\bigr)dT$.
Proposition \ref{prop:var-reduction} below was proven in \cite{Radke-V-2016} 
based on the following key fact from Random Matrix Theory described above 
(see for example \cite{Mehta-2004} or \cite[Propositions 4.1.3 and 4.1.1]{Anderson-Guionnet-Zeitouni-2010}
for proofs).

\begin{lemma}\label{lem:RMT-reduction}
Let ${\mathbb F} = {\mathbb R}$ or ${\mathbb C}$ or $\,{\mathbb H}$, and let $F:{\mathbb R}^n\to {\mathbb R}$
be a measurable and symmetric function. 
Let us write $K_{p,E}$ for the unit ball of the $p$-Schatten norm on a subspace $E$ of ${\cal M}_n({\mathbb F})$,
$d_n$ for the dimension of $E$, and $f_{a,b,c}$ for the function 
\begin{equation*}x\in {\mathbb R}^n \mapsto \prod_{1\ls i<j\ls n}\big|\,x_i^a - x_j^a\big|^b\cdot \prod_{1\ls i\ls n}|x_i|^c. \end{equation*}
Then:
\\
(I) if $E = {\cal M}_n({\mathbb F})$, there is a constant $c_n$ depending only on $E$, such that
\begin{equation}\int_{K_{p,E}} F\bigl(s_1(T),\cdots,s_n(T)\bigr)\,dT = c_n \int_{B_p^n} F\bigl(|x_1|,\cdots,|x_n|\bigr) \cdot f_{2,\beta,\beta-1}\,dx, \end{equation}
where $\beta = {\rm dim}_{\mathbb R}({\mathbb F})$;
furthermore, if $p < \infty$, and if $F$ is also $s$-homogeneous for some $s > -d_n$, then
\begin{equation} \int_{K_{p,E}} F\bigl(s_1(T),\cdots,s_n(T)\bigr)\,dT = 
\frac{c_n}{\Gamma\!\left(1+\frac{d_n+s}{p}\right)}\int_{{\mathbb R}^n} F\bigl(|x_1|,\cdots,|x_n|\bigr) e^{-\|x\|_p^p}f_{2,\beta,\beta-1}(x)\,dx. \end{equation}
\smallskip \\
(II) if $E$ is the subspace of $\ {\mathbb F}$-self-adjoint matrices, there is a constant $c_n$ depending only on $E$, such that
\begin{equation}\int_{K_{p,E}} F\bigl(e_1(T),\cdots,e_n(T)\bigr)\,dT = c_n \int_{B_p^n} F(x) \cdot f_{1,\beta,0}\,dx; \end{equation}
similarly, if $p < \infty$ and $F$ is $s$-homogeneous for some $s > -d_n$, then
\begin{equation}\int_{K_{p,E}} F\bigl(e_1(T),\cdots,e_n(T)\bigr)\,dT =
\frac{c_n}{\Gamma\!\left(1+\frac{d_n+s}{p}\right)}\int_{{\mathbb R}^n} F(x) e^{-\|x\|_p^p}f_{1,\beta,0}(x)\,dx. \end{equation}
\end{lemma}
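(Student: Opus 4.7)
The plan is to reduce both parts of the lemma to two separate classical ingredients: a Weyl-type integration formula that produces the Vandermonde weight from the diagonalisation of $T$, and a polar-integration identity with respect to the $p$-Schatten norm that accounts for the Gamma factor. For $E = {\cal M}_n({\mathbb F})$, every $T$ admits an SVD $T = U\Sigma V^\ast$ with $U, V$ being ${\mathbb F}$-unitary and $\Sigma = {\rm diag}(s_1(T),\ldots,s_n(T))$; for self-adjoint $E$, one uses the spectral decomposition $T = UDU^\ast$ with $D = {\rm diag}(e_1(T),\ldots,e_n(T))$. Computing the Jacobian of these parametrisations (modulo the maximal torus that captures the redundancy in the decomposition) is standard and yields exactly $f_{2,\beta,\beta-1}(s)$ in the first case and $f_{1,\beta,0}(e)$ in the second; I would quote this directly from \cite[Propositions 4.1.3 and 4.1.1]{Anderson-Guionnet-Zeitouni-2010} rather than reproduce it. Since $F$ depends only on the diagonal entries, integration over the unitary group factor contributes a finite constant depending only on $E$, which is absorbed into $c_n$. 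The constraint $T \in K_{p,E}$ becomes $\|s(T)\|_p \ls 1$ or $\|e(T)\|_p \ls 1$, i.e.\ membership in $B_p^n$ (respectively its non-negative orthant for singular values). Symmetrising the ordered Weyl chamber back to all of $B_p^n$ using the $S_n$-symmetry of $F$, together with the evenness of $f_{2,\beta,\beta-1}$ in each variable, introduces combinatorial factors also absorbed into $c_n$. This delivers the first identities of (I) and (II).

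For the second identities, the key observation is a dimension-matching identity: a direct count gives $\deg(f_{2,\beta,\beta-1}) = \beta n(n-1) + (\beta-1)n = \beta n^2 - n$ and $\deg(f_{1,\beta,0}) = \beta n(n-1)/2$, and in both cases $n + \deg(f_{a,\beta,c}) = d_n$ (using $d_n = \beta n^2$ for ${\cal M}_n({\mathbb F})$ and $d_n = n + \beta n(n-1)/2$ for the self-adjoint subspace). Consequently, the integrand $H(x) := F(|x_1|,\ldots,|x_n|)\,f_{a,\beta,c}(x)$ is homogeneous of degree $d_n + s - n$. Polar integration with respect to the $p$-Schatten norm, writing $x = r\omega$ with $r = \|x\|_p$ and $\omega \in \partial B_p^n$, then gives
\[
\int_{{\mathbb R}^n} H(x)\,e^{-\|x\|_p^p}\,dx
 = \int_0^\infty r^{d_n+s-1} e^{-r^p}\,dr \cdot \int_{\partial B_p^n} H(\omega)\,d\sigma(\omega)
 = \tfrac{1}{p}\,\Gamma\!\left(\tfrac{d_n+s}{p}\right) \int_{\partial B_p^n} H(\omega)\,d\sigma(\omega),
\]
while
\[
\int_{B_p^n} H(x)\,dx
 = \int_0^1 r^{d_n+s-1}\,dr \cdot \int_{\partial B_p^n} H(\omega)\,d\sigma(\omega)
 = \tfrac{1}{d_n+s}\int_{\partial B_p^n} H(\omega)\,d\sigma(\omega).
\]
Taking the ratio produces the factor $\Gamma(1 + (d_n+s)/p)$, so combining with the first identity gives exactly the second identity in each of (I) and (II). The hypothesis $s > -d_n$ is used precisely here, to ensure convergence of the radial integrals and positivity of the Gamma factor.

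The only honest obstacle is the Weyl-Jacobian computation itself: it requires careful treatment of the stabiliser of the diagonalising transformation and the correct bookkeeping of differential forms, which becomes somewhat delicate in the quaternionic case $\beta = 4$. Since this computation is genuinely classical and has been written out in detail in the textbook references cited immediately after the lemma, I would rely on those and focus the original content of the proof on verifying the dimension-matching identity $n + \deg(f_{a,\beta,c}) = d_n$ and the polar-integration step, both of which are self-contained and explain why the exponential weight collapses into a single Gamma factor depending only on $(d_n, s, p)$.
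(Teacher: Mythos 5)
Your proposal is correct and follows the same route the paper relies on: the paper does not spell out a proof of Lemma~\ref{lem:RMT-reduction} but cites it to \cite{Mehta-2004} and \cite[Propositions 4.1.1 and 4.1.3]{Anderson-Guionnet-Zeitouni-2010}, which supply exactly the Weyl-integration/Jacobian step you quote, and the passage from the $B_p^n$-integral to the exponential-weight integral via homogeneity and polar integration is the standard (and implicit) remaining step. Your explicit verification of the identity $n + \deg f_{a,\beta,c} = d_n$ in each of the six cases, and the resulting ratio $\bigl(\frac{1}{p}\Gamma(\frac{d_n+s}{p})\bigr)\big/\bigl(\frac{1}{d_n+s}\bigr) = \Gamma\bigl(1+\frac{d_n+s}{p}\bigr)$, is precisely what makes the Gamma factor in the lemma come out right, so the argument is complete as written.
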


\smallskip

Denote by $M_p(f)$ the integral of a function $f: {\mathbb R}^n \to {\mathbb R}$ with respect to the density
$f_{a,b,c}(x)\cdot e^{-\|x\|_p^p}\,dx$, where $a,b,c$ are going to depend appropriately on the subspace $E$ we consider,
and by $N_p(f)$ the corresponding integral with respect to the density $f_{a,b,c}(x)\cdot {\bf 1}_{B_p^n}(x)\,dx$.
The following proposition, following from Lemma \ref{lem:RMT-reduction}, appears in \cite{Radke-V-2016}.
(Note that one of the facts it relies on is that 
\begin{equation*} \frac{N_p\bigl(\|x\|_2^2\bigr)}{N_p(1)} \simeq n^{1-\frac{2}{p}} \simeq d_n\,[{\rm vol}(K_{p,E})]^{2/d_n}
\quad \hbox{and}\quad \frac{M_p\bigl(\|x\|_2^2\bigr)}{M_p(1)} \simeq n^{1+\frac{2}{p}};\end{equation*} 
these estimates follow by the main results of \cite{SaintRaymond-1984} and \cite{Konig-Meyer-Pajor-1998} and by \cite[Proposition 3]{Guedon-Paouris-2007}.)

\begin{proposition}\label{prop:var-reduction}
For every $p\gs 1$, we have 
\begin{equation*}
\sigma_{K_{p,E}}^2 := d_n\,\frac{{\rm Var}_{\overline{K_{p,E}}}\bigl(\|T\|_{S_2^n}^2\bigr)}{\left(\displaystyle\int_{\overline{K_{p,E}}} \|T\|_{S_2^n}^2\,dT\right)^2}
\simeq n^{4/p}\ {\rm Var}_{N_p}\bigl(\|x\|_2^2\bigr) := n^{4/p}\,\frac{N_p\bigl(\|x\|_2^4\bigr)}{N_p(1)} - 
\left(\frac{N_p\bigl(\|x\|_2^2\bigr)}{N_p(1)}\right)^2,
\end{equation*}
while, if $p<\infty$ too, then
\begin{equation*} {\rm Var}_{M_p}\bigl(\|x\|_2^2\bigr) := \frac{M_p\bigl(\|x\|_2^4\bigr)}{M_p(1)} - 
\left(\frac{M_p\bigl(\|x\|_2^2\bigr)}{M_p(1)}\right)^2  \simeq \max\Bigl\{\sigma_{K_{p,E}}^2, \frac{1}{p}\Bigr\}\cdot n^{4/p}.\end{equation*}
\end{proposition}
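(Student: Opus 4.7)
The plan is to apply Lemma~\ref{lem:RMT-reduction} twice---once with $F = \|\cdot\|_2^2$ and once with $F = \|\cdot\|_2^4$, both being symmetric homogeneous functions of the singular values or eigenvalues---so as to reduce the second and fourth moments of $\|T\|_{S_2^n}$ on $\overline{K_{p,E}}$ to $N_p$- or $M_p$-integrals; the quoted asymptotics on those integrals will then do the rest.

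For the first identity I begin by observing that $\sigma_{K_{p,E}}^2$ is scale-invariant: rescaling $\overline{K_{p,E}}$ by any $\lambda>0$ multiplies both the variance and the squared mean by $\lambda^4$, so their quotient is unchanged.  I may therefore work directly on $K_{p,E}$ and avoid the awkward normalising factor $[{\rm vol}(K_{p,E})]^{1/d_n}$.  Applying either part of Lemma~\ref{lem:RMT-reduction} to the symmetric functions $\|x\|_2^2$ and $\|x\|_2^4$ yields $\int_{K_{p,E}}\|T\|_{S_2^n}^{2k}\,dT\,/\,{\rm vol}(K_{p,E}) = N_p(\|x\|_2^{2k})/N_p(1)$ for $k=1,2$, so that
\begin{equation*}
\sigma_{K_{p,E}}^2 \,=\, d_n \cdot \frac{{\rm Var}_{N_p}(\|x\|_2^2)}{\bigl(N_p(\|x\|_2^2)/N_p(1)\bigr)^2}.
\end{equation*}
Plugging in $d_n\simeq n^2$ and the quoted estimate $N_p(\|x\|_2^2)/N_p(1)\simeq n^{1-2/p}$ immediately gives the first claim.

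For the second identity (valid when $p<\infty$) I equate the two expressions for $\int_{K_{p,E}}F\,dT$ provided by Lemma~\ref{lem:RMT-reduction} when $F$ is $s$-homogeneous and symmetric, which produces the bridge
\begin{equation*}
M_p(F) \,=\, \Gamma\!\left(1 + \frac{d_n+s}{p}\right) N_p(F).
\end{equation*}
Setting $R_k := \Gamma(1+(d_n+k)/p)/\Gamma(1+d_n/p)$ and applying this for $s=0,2,4$, a one-line rearrangement $R_4 A - R_2^2 B = R_4 (A-B) + (R_4-R_2^2)B$ with $A = N_p(\|x\|_2^4)/N_p(1)$ and $B = (N_p(\|x\|_2^2)/N_p(1))^2$ produces
\begin{equation*}
{\rm Var}_{M_p}(\|x\|_2^2) \,=\, R_4\cdot {\rm Var}_{N_p}(\|x\|_2^2) \,+\, (R_4 - R_2^2)\cdot \bigl(N_p(\|x\|_2^2)/N_p(1)\bigr)^2.
\end{equation*}

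The genuinely delicate step---and in my view the only real obstacle---is the asymptotic comparison of the two Gamma-ratios, which has to be sharp enough to expose the additive $1/p$ contribution.  Positivity of $R_4 - R_2^2$ is automatic from log-convexity of $\Gamma$; to obtain its precise order I would Taylor-expand $k\mapsto \log\Gamma(1+(d_n+k)/p)$ around $k=2$, using $\psi'(1+y)\simeq 1/y$ for large $y$, to conclude $R_4/R_2^2 - 1 \simeq 4/(p d_n)$ and therefore $R_4 - R_2^2 \simeq R_2^2/(p d_n)$.  Combining this with $R_4\simeq (d_n/p)^{4/p}\simeq n^{8/p}$ (where I need $p^{-4/p}$ to be bounded, which holds for $p\geq 1$), with the first identity rewritten as ${\rm Var}_{N_p}(\|x\|_2^2)\simeq \sigma_{K_{p,E}}^2/n^{4/p}$, and with $(N_p(\|x\|_2^2)/N_p(1))^2 \simeq n^{2-4/p}$, the two contributions reduce to $R_4\cdot {\rm Var}_{N_p}\simeq n^{4/p}\,\sigma_{K_{p,E}}^2$ and $(R_4-R_2^2)\cdot(N_p(\|x\|_2^2)/N_p(1))^2\simeq n^{4/p}/p$ respectively, giving ${\rm Var}_{M_p}(\|x\|_2^2)\simeq n^{4/p}\bigl(\sigma_{K_{p,E}}^2 + 1/p\bigr)\simeq \max\{\sigma_{K_{p,E}}^2,\,1/p\}\cdot n^{4/p}$, which is exactly the second claim.
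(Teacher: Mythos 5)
Your argument follows the route the paper indicates: apply Lemma~\ref{lem:RMT-reduction} with $F=\|\cdot\|_2^2$ and $F=\|\cdot\|_2^4$, and feed in the quoted moment estimate $N_p(\|x\|_2^2)/N_p(1)\simeq n^{1-2/p}$. Your scale-invariance remark for $\sigma_{K_{p,E}}^2$ cleanly dispenses with the volume normalisation of $\overline{K_{p,E}}$, the bridge $M_p(F)=\Gamma\bigl(1+(d_n+s)/p\bigr)N_p(F)$ read off from the two halves of the lemma is exactly the right identity, and the decomposition $R_4A-R_2^2B = R_4(A-B)+(R_4-R_2^2)B$ neatly separates the two contributions. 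The first equivalence, and the upper bound in the second, are correct as you have written them.

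One step is under-justified. The second difference gives $\log(R_4/R_2^2)=g(4)-2g(2)+g(0)=4\,g''(\xi)$ for some $\xi\in(0,4)$, where $g(k)=\log\Gamma\bigl(1+(d_n+k)/p\bigr)$; since $\psi'(1+y)\simeq(1+y)^{-1}$ for all $y\gs 0$, this yields $R_4/R_2^2-1\simeq 1/\bigl(p(p+d_n)\bigr)$. The approximation $\psi'(1+y)\simeq 1/y$ that you invoke is the special case in which $y=d_n/p$ is bounded below, i.e.\ $p\lesssim d_n\simeq n^2$, and in that range your chain is correct. When $p\gg n^2$, however, the contribution $(R_4-R_2^2)B$ is of order $n^{4/p}\,d_n/p^2$, which is strictly smaller than $n^{4/p}/p$, so your computation no longer delivers the lower bound ${\rm Var}_{M_p}(\|x\|_2^2)\gtrsim n^{4/p}/p$ needed for the second equivalence; one would have to supply a separate lower bound $\sigma_{K_{p,E}}^2\gtrsim 1/p$ in that regime. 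This is only a corner case and the remainder of the proof is sound, but since the proposition is asserted for every $p\gs 1$ it deserves to be flagged.
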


In the case of $p = \infty$ it follows that, to accurately estimate $\sigma_{K_{\infty,E}}^2 \equiv \sigma_{B_E}^2$,
we should study integrals of the form
\begin{equation*} 
\int_{-1}^1\int_{-1}^1\cdots \int_{-1}^1 s(x)\,\prod_{1\ls i<j\ls n}\big|\,x_i^a - x_j^a\big|^b\cdot \prod_{1\ls i\ls n}|x_i|^c\,dx_n\ldots dx_2dx_1\end{equation*}
where $a=1$ or 2, and where the integrand $s(x)$ is a symmetric polynomial (here of degree at most 4).

\subsection{Selberg's, Aomoto's, and Kadell's results}\label{subsec:SAK-results}

Recall the formula for the value of the Euler beta integral:
\begin{equation*} \int_0^1x^{u-1}(1-x)^{w-1}\,dx = \frac{\Gamma(u)\Gamma(w)}{\Gamma(u+w)},\end{equation*}
where $\Re(u), \Re(w) > 0$.
Selberg \cite{Selberg-1944} (see also \cite[Chapter 17]{Mehta-2004} for a presentation of his original proof) 
discovered a high-dimensional generalisation of this formula:
for every triple of complex numbers $u, w, \kappa$ with
\begin{equation*} \Re(u) > 0,\quad  \Re(w) > 0, \quad 
\Re(\kappa) > -\min\left(\frac{1}{n},\frac{\Re(u)}{n-1},\frac{\Re(w)}{n-1}\right),\end{equation*}
if we set  
\begin{equation*} h({\bm t};u,w,\kappa) := \prod_{1\ls i\ls n} t_i^{u-1} (1-t_i)^{w-1}\cdot
\prod_{1\ls i<j\ls n}\big|\,t_i - t_j\big|^{2\kappa} \end{equation*}
we have
\begin{equation*} I_0(n; u, w, \kappa) : = \int\limits_{[0,1]^n} h({\bm t};u,w,\kappa)\,d{\bm t} \ \ =  
\prod_{1\ls i \ls n}\frac{\Gamma\bigl(1+(n-i+1)\kappa\bigr)}{\Gamma(1+\kappa)} 
\ \prod_{1\ls i\ls n}\frac{\Gamma\bigl(u + (n-i)\kappa\bigr)\, \Gamma\bigl(w + (n-i)\kappa\bigr)}{\Gamma\bigl(u+w + (2n-i-1)\kappa\bigr)}.\end{equation*}

\bigskip

Aomoto \cite{Aomoto-1987} extended Selberg's result to more general integrals, where the integrand 
could be $h({\bm t};u,w,\kappa)$ multiplied by an elementary symmetric function $e_m({\bm t})$:   
\begin{equation*}e_m({\bm t}) \ := \sum_{1\ls i_1<\cdots < i_m\ls n}t_{i_1}\cdots t_{i_m}
\qquad \hbox{with}\ \ 1\ls m < n. 
\end{equation*}
We observe that by symmetry we have
\begin{align}
\int\limits_{[0,1]^n} e_m({\bm t}) \cdot h({\bm t};u,w,\kappa)\,d{\bm t}
\label{eq1:Aomoto-integrals}
&\ \ = {{n}\choose{m}} \int\limits_{[0,1]^n} \prod_{1\ls i\ls m} t_i \cdot h({\bm t};u,w,\kappa)\,d{\bm t}
\\ \nonumber
\hbox{which Aomoto showed}\quad \quad \qquad \qquad
&\ \ = {{n}\choose{m}} \prod_{i=1}^m \frac{u + (n-i)\kappa}{u+w + (2n-i-1)\kappa} I_0(n; u, w, \kappa)
\end{align}
(recall that $I_0(n; u, w, \kappa)$ is Selberg's integral,
and we can naturally extend this notation by writing $I_m = I_m(n; u, w, \kappa)$ 
for the integral in \eqref{eq1:Aomoto-integrals}). 
In fact, Aomoto used these expressions to conclude that the ratio:
\begin{equation*} \frac{1}{I_0(n; u, w, \kappa)}\,\int\limits_{[0,1]^n} \prod_{1\ls i\ls n}(t_i-y) \cdot h({\bm t};u,w,\kappa)\,d{\bm t} \end{equation*}
is equal to a certain Jacobi polynomial:
\begin{equation*}
\frac{1}{I_0(n; u, w, \kappa)}\,\int\limits_{[0,1]^n} \prod_{1\ls i\ls n}(t_i-y) \cdot h({\bm t};u,w,\kappa)\,d{\bm t}  = 
\frac{n!}{\prod_i (\alpha + \beta + n + i)} \, P_n^{(\alpha, \beta)}(1-2y),
\end{equation*}
where $\alpha = -1 + 2u/\kappa$, $\ \beta = -1 + 2w/\kappa$ and $P_n^{(\alpha, \beta)}$ is the Jacobi polynomial
of degree $n$.

Aomoto's approach relied on finding recurrence relations between the different $I_m$
which would follow from integration by parts.
It should be mentioned  
that our main argument in \cite{Radke-V-2016} was along very similar lines.

\medskip

With only a little more effort (see \cite[Chapter 8]{Andrews-Askey-Roy-1993}), Aomoto's proof method can also give similar formulas when the integrand involves slightly more general symmetric polynomials having terms of the form
\begin{equation*} \prod_{i=1}^{m_1} t_i\ \cdot \prod_{j = m_1 + 1-m_3}^{m_1+m_2 - m_3} (1-t_j) \end{equation*}
where $m_1, m_2, m_3\gs 0$ and $m_3\ls m_1$, $m_1+m_2 -m_3 \ls n$: we have 
\begin{multline}\label{eq3:Aomoto-integrals}
I_{m_1,m_2,m_3} := \int\limits_{[0,1]^n} \prod_{i=1}^{m_1} t_i\ \cdot \prod_{j = m_1 + 1-m_3}^{m_1+m_2 - m_3} (1-t_j) \cdot  h({\bm t};u,w,\kappa)\,d{\bm t}
\\
= \prod_{i=1}^{m_3}\frac{\bigl(u+w+(n-i-1)\kappa\bigr)}{\bigl(u+w + 1 +(2n-i-1)\kappa\bigr)}
  \cdot \frac{\prod\limits_{i=1}^{m_1}\bigl(u+(n-i)\kappa\bigr)\prod\limits_{i=1}^{m_2}\bigl(w+(n-i)\kappa\bigr)}{\prod\limits_{i=1}^{m_1+m_2}\bigl(u+w+(2n-i-1)\kappa\bigr)}\, I_0(n; u, w, \kappa).
\end{multline}
Note that if $m_3>0$, then there is some overlap in factors of the two products, 
something which allows us to get additional factors of the form $t_i(1-t_i)$ for some $i$ only 
(and will allow us, for instance, to exactly compute $\int_{B_E} \|T\|_{S_2^n}^2\,dT$ or $\int_{B_E} \|T\|_{S_4^n}^4\,dT$
when $E = {\cal M}_n({\mathbb F})$).

\bigskip

Kadell \cite{Kadell-1997} (see also Kaneko \cite{Kaneko-1993}, as well as later proofs in \cite{Baker-Forrester-1998}
and \cite{Warnaar-2008}) has extended these results in the most general way: he has shown that,
for each $\kappa \gs 0$, there is an infinite family of homogeneous symmetric polynomials 
$\{s_\lambda^\kappa({\bm t})\}$ indexed by the partitions, which spans the space of symmetric polynomials,
and such that the polynomial corresponding to the partition $\lambda$ has the following properties:
\begin{itemize}
\item $\displaystyle s_\lambda^\kappa(t_1,\ldots, t_n) = m_\lambda({\bm t}) \  + 
\sum\limits_{\substack{\mu\neq \lambda\\ |\mu| = |\lambda|}} a^\kappa_{\lambda, \mu,n}\, m_\mu({\bm t})$
where $n\gs l(\lambda)$, and where $a^\kappa_{\lambda, \mu,n}$ are coefficients which depend on $\kappa$, $\lambda$
and $\mu$, and which might also depend on the number of variables $n$ (but, as we will shortly see, don't).
\item 
For every $n\gs l(\lambda)$ we have $s_\lambda^\kappa(1^n) = \dfrac{f_n^\kappa[\lambda]}{f_n^\kappa[(0)]}$
where 
\begin{equation*} f_n^\kappa[\lambda] := 
\!\!\prod_{\substack{i<j\\ \lambda_i-\lambda_j > 0}}\bigl(\lambda_i-\lambda_j + (j-i)\kappa\bigr)_\kappa
\ \cdot \!\!\prod_{\substack{i<j\\ \lambda_i-\lambda_j = 0}}\frac{j-i}{j-i+1}\cdot \bigl(1 + (j-i)\kappa\bigr)_\kappa
\end{equation*}
and where $(x)_m := \frac{\Gamma(x+m)}{\Gamma(x)}$ stands for the \emph{Pochhammer function} or \emph{rising factorial}
(here $m$ can take non-integer values too), and moreover we have
\begin{align}
\nonumber
 \int\limits_{[0,1]^n} &s_\lambda^\kappa({\bm t})\cdot \prod_{1\ls i\ls n} t_i^{u-1} (1-t_i)^{w-1}\
\prod_{1\ls i<j\ls n}\big|\,t_i - t_j\big|^{2\kappa}\,d{\bm t} \equiv \int\limits_{[0,1]^n} s_\lambda^\kappa({\bm t}) \cdot h({\bm t};u,w,\kappa)\,d{\bm t}
\\ \label{eq:Kadell-integration-formula}
&\qquad \qquad  = I_0(n;u,w,\kappa)
\cdot s_\lambda^\kappa(1^n)\  
\prod_{i=1}^n \frac{\bigl(u + (n-i)\kappa\bigr)_{\lambda_i}}{\bigl(u+w + (2n-i-1)\kappa\bigr)_{\lambda_i}}
\\ \nonumber
& \qquad \qquad = n! \,f_n^\kappa[\lambda]
\ \prod_{1\ls i\ls n}\frac{\Gamma\bigl(u + (n-i)\kappa + \lambda_i\bigr)\, \Gamma\bigl(w + (n-i)\kappa\bigr)}{\Gamma\bigl(u+w + (2n-i-1)\kappa + \lambda_i\bigr)}.
\end{align}
\end{itemize}
This family can in fact be taken to be the family of (monic) Jack polynomials corresponding to the parameter $1/\kappa$,
that is, $s_\lambda^k({\bm t}) = P_\lambda({\bm t};1/\kappa)$ for every partition $\lambda$.

\medskip

Although we will not need this in the sequel, let us recall for the sake of completeness that
one way of defining the family of Jack polynomials $\bigl\{P_\lambda({\bm t};\xi)\bigr\}$ corresponding 
to a parameter $\xi$ is as follows (see e.g. \cite[Chapter VI]{Macdonald-book}).
Recall that, for any non-negative integer $b$, we can define the power-sum function
$p_b(t_1,\ldots,t_n) := \sum_{i=1}^n t_i^b$; we then extend this notion by defining for every partition 
$\lambda = (\lambda_1,\ldots,\lambda_m)$ a power-sum function 
$p_\lambda({\bm t}) := \prod_{j=1}^m p_{\lambda_j}({\bm t})$. 
We can also define a (partial) ordering of the partitions, called the \emph{dominance} ordering, 
by setting $\mu \preccurlyeq \lambda$ if and only if $|\mu| = |\lambda|$ and
$\mu_1+\cdots+\mu_i \ls \lambda_1 +\cdots + \lambda_i$ for every $i\gs 1$.
Finally, consider the field ${\mathbb Q}(\xi)$ of all rational functions of $\xi$ (seen 
as an indeterminate) with coefficients in ${\mathbb Q}$ and also the vector space
${\mathbb Q}(\xi)\bigl[\bigl\{m_\lambda(t_1,\ldots,t_n): \lambda \ \hbox{partition}, l(\lambda)\ls n\bigr\}\bigr]$
of all symmetric polynomials in $n$ variables with coefficients from ${\mathbb Q}(\xi)$.
We can define a scalar product $\langle\cdot,\cdot\rangle_\xi$ on this vector space
by setting
\begin{equation}
\label{def:power-sum-inner-product}
 \left\langle p_\lambda,p_\mu\right\rangle_\xi : = z_\lambda \xi^{l(\lambda)} \cdot {\bm 1}_{\lambda = \mu}, \end{equation}
where $z_\lambda = \prod_{i=1}^{l(\lambda)}a_i!\cdot i^{a_i}$ with $a_i$ being the number of parts of $\lambda$ equal to $i$.
Then the family of Jack polynomials $\{P_\lambda({\bm t};\xi):\lambda\ \hbox{partition}\}$ in $n$ variables is the unique family of functions in ${\mathbb Q}(\xi)\bigl[\{m_\lambda({\bm t})\}\bigr]$ satisfying the following two properties:
\begin{itemize}
\item
\underline{Orthogonality} $\quad \left\langle P_\lambda({\bm t};\xi),P_\mu({\bm t};\xi)\right\rangle_\xi = 0 \ \  $ if $\ \mu \neq \lambda$.
\item
\underline{Triangularity} 
$\quad$ If we write
\begin{equation*} P_\lambda({\bm t};\xi) = \sum\limits_{\mu: l(\mu)\ls n} c(\lambda, \mu,n;\xi)\, m_\mu({\bm t}) \end{equation*} 
for some coefficients $c(\lambda,\mu,n;\xi)\in {\mathbb Q}(\xi)$, 
then $c(\lambda,\mu,n;\xi) \neq 0$ only if $\mu \preccurlyeq \lambda$ and $c(\lambda,\lambda,n;\xi) = 1$.
\end{itemize}
Actually this definition overdetermines the family of Jack polynomials, which means that
a priori it is not clear that there exists any family from ${\mathbb Q}(\xi)\bigl[\{m_\lambda({\bm t})\}\bigr]$ 
which has these two properties. However it can be shown that such a family exists, and then necessarily it is unique.

Moreover, it can be shown that the coefficients $c(\lambda,\mu,n;\xi)$ do not depend on $n$,
and therefore the Jack polynomials have the following stability property:
for every $n_1\gs n_2\gs l(\lambda)$, 
\begin{equation*} 
P_\lambda\bigl((t_1,\ldots,t_{n_2},{\bm 0}_{n_1-n_2});\xi\bigr)
\equiv P_\lambda\bigl((t_1,\ldots,t_{n_2});\xi\bigr).\end{equation*}
For convenience we also set $P_\lambda\bigl((t_1,\ldots,t_m);\xi\bigr) \equiv 0$ if $m<l(\lambda)$.

\medskip

Alternatively, we can obtain the Jack polynomials corresponding to $\xi$ by considering
the eigenfunctions of the following  
operator  
arising in the Calogero-Sutherland model, 
which aims to describe a system of $n$ identical quantum particles on a circle
(see e.g. \cite{Sutherland-1972}, \cite{Sogo-1994}):
\begin{equation*}  D^\ast_\xi = \sum_{i=1}^nx_i\frac{\partial}{\partial x_i}\left(x_i\frac{\partial}{\partial x_i}\right) 
+ \frac{1}{\xi}\sum_{i<j}\frac{x_i + x_j}{x_i-x_j}\left(x_i\frac{\partial}{\partial x_i} - x_j\frac{\partial}{\partial x_j}\right). \end{equation*}
The Jack polynomial $P_\lambda(t_1,\ldots, t_n;\xi)$ is the unique homogeneous and symmetric polynomial
eigenfunction with eigenvalue $\sum_{i=1}^n\bigl(\lambda_i^2 + \frac{1}{\xi}(n-1-2i)\lambda_i\bigr)$
which is monic and whose leading terms are of type $\lambda$ (in other words, we choose the normalisation
$P_\lambda({\bm t};\xi) = m_\lambda({\bm t}) + \sum_{\mu \prec \lambda} c(\lambda, \mu;\xi)m_\mu({\bm t})$).

\smallskip

Setting $\xi$ equal to different non-zero real values (although it has to be noted that the orthogonalising inner
product defined above will be positive definite only for positive real values), we obtain different families of
symmetric polynomials. With $\xi =1$ the corresponding family is the Schur polynomials $\bigl\{P_\lambda({\bm t};1)\bigr\}$,
which are intimately connected with the representation theory of the symmetric groups $S_n$ 
and of the (complex) general linear groups. 
Other important values, and essentially the only ones we care about for the main applications in this paper,
are $\xi=2$, which gives the zonal polynomials $\bigl\{P_\lambda({\bm t};2)\bigr\}$ 
associated with real symmetric matrices,
and $\xi = \frac{1}{2}$, which gives the quaternion zonal polynomials 
$\bigl\{P_\lambda(\cdot;1/2)\bigr\}$
associated with the quaternionic self-adjoint matrices.

\smallskip

What is important to us in this note is having transition matrices from the basis 
$\{s_\lambda^\kappa({\bm t})\}=\{P_\lambda^{1/\kappa}({\bm t})\}$
to the basis of monomial functions of degree up to 4 and vice versa. 
These can be found via the determinantal expressions for the Jack polynomials in terms of the monomial functions
which were established by Lapointe, Lascoux and Morse \cite{Lapointe-Lascoux-Morse-2000}.
They are given in the following tables (and of course, in the specific cases of the special families 
of the Schur or zonal polynomials ($\kappa = 1, 1/2$ or $2$), such tables were known even before
\cite{Lapointe-Lascoux-Morse-2000}).

\begin{center} $P_{(1)}^{1/\kappa} = m_{(1)}$ 
\hspace{2cm}
\begin{tabular}{|c|| c | c |} 
 \hline
  & $m_{(2)}$ & $m_{(1^2)}$ \\ [0.5ex] 
 \hline\hline \\ [-1.5ex]
 $P_{(2)}^{1/\kappa}$ & 1 & $\frac{2\kappa}{\kappa+1}$ \\ [0.7ex] 
 \hline \\ [-1.5ex]
  $P_{(1^2)}^{1/\kappa}$ & 0 & 1 \\  [0.7ex] 
 \hline
\end{tabular} 
\hspace{2cm}
\begin{tabular}{|c|| c | c | c |} 
 \hline
  & $m_{(3)}$ & $m_{(2,1)}$ & $m_{(1^3)}$ \\ [0.5ex] 
 \hline\hline \\ [-1.5ex]
 $P_{(3)}^{1/\kappa}$ & 1 & $\frac{3\kappa}{\kappa+2}$ & $\frac{6\kappa^2}{(\kappa + 1)(\kappa + 2)}$ \\ [0.7ex]
 \hline \\ [-1.5ex]
 $P_{(2,1)}^{1/\kappa}$ & 0 & 1 & $\frac{6\kappa}{2\kappa+1}$ \\ [0.7ex] 
 \hline \\ [-1.5ex]
  $P_{(1^3)}^{1/\kappa}$ & 0 & 0 & 1 \\  [0.7ex] 
 \hline
\end{tabular} 
\end{center}

\begin{center}
\begin{tabular}{|c|| c | c | c | c| c|} 
 \hline
  & $m_{(4)}$ & $m_{(3,1)}$ & $m_{(2^2)}$ & $m_{(2,1^2)}$ & $m_{(1^4)}$ \\ [0.5ex] 
 \hline\hline \\ [-1.5ex]
 $P_{(4)}^{1/\kappa}$ & 1 & $\frac{4\kappa}{\kappa+3}$ & $\frac{6\kappa(\kappa+1)}{(\kappa + 2)(\kappa + 3)}$ 
 & $\frac{12\kappa^2}{(\kappa + 2)(\kappa + 3)}$ & $\frac{24\kappa^3}{(\kappa + 1)(\kappa + 2)(\kappa + 3)}$ \\ [0.7ex]
 \hline \\ [-1.5ex]
 $P_{(3,1)}^{1/\kappa}$ & 0 & 1 & $\frac{2\kappa}{\kappa+1}$ & $\frac{(5\kappa + 3)\kappa}{(\kappa + 1)^2}$ & $\frac{12\kappa^2}{(\kappa + 1)^2}$ \\ [0.7ex] 
 \hline \\ [-1.5ex]
  $P_{(2^2)}^{1/\kappa}$ & 0 & 0 & 1 & $\frac{2\kappa}{\kappa+1}$ & $\frac{12\kappa^2}{(\kappa + 1)(2\kappa + 1)}$ \\  [0.7ex] 
  \hline \\ [-1.5ex]
  $P_{(2,1^2)}^{1/\kappa}$ & 0 & 0 & 0 & 1 & $\frac{12\kappa}{3\kappa+1}$  \\ [0.7ex]
 \hline \\ [-1.5ex]
 $P_{(1^4)}^{1/\kappa}$ & 0 & 0 & 0 & 0 & 1  \\ [0.7ex]
 \hline
\end{tabular} 
\end{center}

\begin{equation} \label{conversion-table1}
\hbox{
\begin{tabular}{|c|| c | c |} 
 \hline \\ [-1.8ex]
  &  $P_{(2)}^{1/\kappa}$ &  $P_{(1^2)}^{1/\kappa}$ \\ [0.7ex] 
 \hline\hline \\ [-1.5ex]
 $m_{(2)}$ & 1 & $-\frac{2\kappa}{\kappa+1}$ \\ [0.7ex] 
 \hline \\ [-1.5ex]
 $m_{(1^2)}$  & 0 & 1 \\  [0.7ex] 
 \hline
\end{tabular} 
\hspace{2cm}
\begin{tabular}{|c|| c | c | c |} 
 \hline \\ [-1.8ex]
  & $P_{(3)}^{1/\kappa}$ & $P_{(2,1)}^{1/\kappa}$ &  $P_{(1^3)}^{1/\kappa}$ \\ [0.7ex] 
 \hline\hline \\ [-1.5ex]
 $m_{(3)}$ & 1 & $-\frac{3\kappa}{\kappa+2}$ & $\frac{6\kappa^2}{(\kappa + 1)(2\kappa + 1)}$ \\ [0.7ex]
 \hline \\ [-1.5ex]
  $m_{(2,1)}$ & 0 & 1 & $-\frac{6\kappa}{2\kappa+1}$ \\ [0.7ex] 
 \hline \\ [-1.5ex]
 $m_{(1^3)}$  & 0 & 0 & 1 \\  [0.7ex] 
 \hline
\end{tabular} 
}
\end{equation}

\begin{equation} \label{conversion-table2}
\hbox{
\begin{tabular}{|c|| c | c | c | c| c|} 
 \hline \\ [-1.8ex]
  & $P_{(4)}^{1/\kappa}$ & $P_{(3,1)}^{1/\kappa}$ & $P_{(2^2)}^{1/\kappa}$ & $P_{(2,1^2)}^{1/\kappa}$ & $P_{(1^4)}^{1/\kappa}$ \\ [0.7ex] 
 \hline\hline \\ [-1.5ex]
 $m_{(4)}$ & 1 & $-\frac{4\kappa}{\kappa+3}$ & $\frac{2\kappa(\kappa-1)}{(\kappa + 1)(\kappa + 2)}$
 & $\frac{4\kappa^2}{(\kappa + 1)^2}$ & $-\frac{24\kappa^3}{(\kappa + 1)(2\kappa + 1)(3\kappa + 1)}$ \\ [0.7ex]
 \hline \\ [-1.5ex]
 $m_{(3,1)}$ & 0 & 1 & $-\frac{2\kappa}{\kappa+1}$ & $-\frac{\kappa(\kappa + 3)}{(\kappa + 1)^2}$ & $\frac{24\kappa^2}{(2\kappa+1)(3\kappa + 1)}$ \\ [0.7ex] 
 \hline \\ [-1.5ex]
  $m_{(2^2)}$ & 0 & 0 & 1 & $-\frac{2\kappa}{\kappa+1}$ & $\frac{12\kappa^2}{(2\kappa + 1)(3\kappa + 1)}$ \\  [0.7ex] 
  \hline \\ [-1.5ex]
  $m_{(2,1^2)}$ & 0 & 0 & 0 & 1 & $-\frac{12\kappa}{3\kappa+1}$  \\ [0.7ex]
 \hline \\ [-1.5ex]
 $m_{(1^4)}$ & 0 & 0 & 0 & 0 & 1  \\ [0.7ex]
 \hline
\end{tabular} 
}
\end{equation}

\subsection{Weingarten calculus for invariant ensembles}\label{subsec:Weingarten}

A permutation $\sigma \in S_k$ can be decomposed into cycles. If the numbers of lengths of cycles are
$\mu_1\gs \mu_2\gs\cdots \gs \mu_l$, then the sequence $\mu=(\mu_1,\mu_2,\ldots,\mu_l)$ is a partition of $k$. We 
will refer to $\mu$ as the cycle-type of $\sigma$. Recall that the different cycle-types correspond to the different
conjugacy classes of $S_k$. Recall also that characters of $S_k$ are class functions, that is, they 
take the same value at permutations belonging to the same conjugacy class or, in other words,
having the same cycle-type.

For the (pairwise non-isomorphic) irreducible representations of $S_k$, there is a
canonical way of identifying each one of them with a unique partition of $k$ and vice-versa
(see e.g. \cite[Section 2.3]{Sagan-book} or \cite[Chapter 4]{Fulton-Harris-book}).
This also gives a natural one-to-one and onto correspondence between the irreducible characters of $S_k$ and partitions of $k$,
which allows us to write the character table of $S_k$ in terms of partitions 
(in fact, to find $\chi^\lambda(\mu)$,
the value of the character correspoding to $\lambda$ at a permutation with cycle-type $\mu$, one can use the Frobenius formula, see e.g. \cite[Proposition 4.37]{Fulton-Harris-book}). 
In our computations in Sections \ref{sec:almost-isotropic} and \ref{sec:neg-cor-prop} we will need to plug in
values of characters of $S_2, S_3$ and $S_4$, so the character tables for these are recalled here:

\begin{equation}
\hbox{
\begin{tabular}{|c|| c | c ||} 
 \hline \\ [-1.8ex]
  &  $(1^2)$ &  $(2)$ \\ [0.7ex] 
 \hline\hline \\ [-1.5ex]
 $\chi^{(2)}$ & 1 & $1$ \\ [0.7ex] 
 \hline \\ [-1.5ex]
 $\chi^{(1^2)}$  & 1 & -1 \\  [0.7ex] 
 \hline
\end{tabular} 
\hspace{2cm}
\begin{tabular}{|c|| c | c | c ||} 
 \hline \\ [-1.8ex]
  & $(1^3)$ & $(2,1)$ &  $(3)$ \\ [0.7ex] 
 \hline\hline \\ [-1.5ex]
 $\chi^{(3)}$ & $1$ & $1$ & $1$ \\ [0.7ex]
 \hline \\ [-1.5ex]
  $\chi^{(2,1)}$ & $2$ & $0$ & $-1$ \\ [0.7ex] 
 \hline \\ [-1.5ex]
 $\chi^{(1^3)}$  & $1$ & $-1$ & $1$ \\  [0.7ex] 
 \hline
\end{tabular} 
}
\end{equation}

\begin{equation}
\hbox{
\begin{tabular}{|c|| c | c | c | c | c ||} 
 \hline \\ [-1.8ex]
  & $(1^4)$ & $(2,1^2)$ & $(2^2)$ & $(3,1)$ & $(4)$ \\ [0.7ex] 
 \hline\hline \\ [-1.5ex]
 $\chi^{(4)}$ & $1$ & $1$ & $1$ & $1$ & $1$ \\ [0.7ex]
 \hline \\ [-1.5ex]
 $\chi^{(3,1)}$ & $3$ & $1$ & $-1$ & $0$ & $-1$ \\ [0.7ex] 
 \hline \\ [-1.5ex]
  $\chi^{(2^2)}$ & $2$ & $0$ & $2$ & $-1$ & $0$ \\  [0.7ex] 
  \hline \\ [-1.5ex]
  $\chi^{(2,1^2)}$ & $3$ & $-1$ & $-1$ & $0$ & $1$  \\ [0.7ex]
 \hline \\ [-1.5ex]
 $\chi^{(1^4)}$ & $1$ & $-1$ & $1$ & $1$ & $-1$  \\ [0.7ex]
 \hline
\end{tabular} 
}
\end{equation}

\subsubsection{The unitary case}

For two sequences ${\bm i} = (i_1,\ldots,i_k)$ and ${\bm i}^\prime = (i_1^\prime,\ldots,i_k^\prime)$
of positive integers and for a permutation $\pi\in S_k$, set
\begin{equation} \delta_\pi({\bm i},{\bm i}^\prime) = \prod_{s=1}^n \delta_{i_{\pi(s)},i_s^\prime},\end{equation}
where $\delta_{i,j} = {\bf 1}_{\{i=j\}}$.

Given a square matrix $A$ and a permutation $\pi\in S_k$ of cycle-type $\mu=(\mu_1,\mu_2,\ldots,\mu_l)$, set
\begin{equation} {\rm Tr}_\pi(A) = \prod_{j=1}^l {\rm Tr}(A^{\mu_j}).\end{equation} 
Finally, given a partition $\lambda$ of $k$ and a number $z\in {\mathbb C}$, define
\begin{equation} C_\lambda(z) = \prod_{i=1}^{l(\lambda)}\prod_{j=1}^{\lambda_i}(z+j-i) \end{equation}
(in the applications below we are going to evaluate $C_\lambda(z)$ at $z=n$; in this case,
this is just the value  at $1^n = (1,\ldots,1)$ of the Jack polynomial $J_\lambda^1({\bm t})
\equiv c_\lambda \cdot P_\lambda^1({\bm t})$ under a different normalisation, see e.g. \cite[Theorem 5.4]{Stanley-1989}).

One of the equivalent ways of defining the \emph{unitary Weingarten function} on $S_k$
with one complex parameter $z\in {\mathbb C}$ 
(see \cite{Collins-Sniady-2006} or \cite{Collins-Matsumoto-Saad-2014})
is the following: 
it is the complex-valued function on $S_k$ given by
\begin{equation}\pi\in S_k \quad \mapsto \quad 
{\rm Wg}^U(\pi;z) := \frac{1}{k!}\sum_{\substack{\lambda\vdash k\\C_\lambda(z)\neq 0}}\frac{\chi^\lambda(e)}{C_\lambda(z)}\,\chi^\lambda(\pi), 
\end{equation} 
where $e$ is the identity permutation in $S_k$. Note that, unless $z\in \{0,\pm 1,\ldots,\pm (k-1)\}$, 
$C_\lambda(z)\neq 0$ for all partitions $\lambda\vdash k$. 
Note also that ${\rm Wg}^U(\pi;z)$ depends only on the cycle-type of $\pi$.

It is convenient to also consider the convolution of two Weingarten functions. Recall that, 
for two complex-valued functions $f_1, f_2$ on $S_k$, 
\begin{equation*}
(f_1\ast f_2)(\pi) := \sum_{\tau \in S_k} f_1(\pi \tau) f_2(\tau^{-1}) = \sum_{\tau \in S_k} f_1(\tau) f_2(\tau^{-1}\pi).
\end{equation*}
We set
\begin{equation}
\pi\in S_k \quad \mapsto \quad 
{\rm Wg}^U(\pi;z,w) := \left({\rm Wg}^U(\cdot;z) \ast {\rm Wg}^U(\cdot;w)\right)(\pi),
\end{equation}
where $z,w\in {\mathbb C}$.

By Schur's lemma and the orthogonality relations it entails
(see also \cite[Theorem 2.13]{Isaacs-book} for a different derivation), 
we can also write

\begin{equation} 
{\rm Wg}^U(\pi;z,w) = \frac{1}{k!}\sum_{\substack{\lambda\vdash k\\C_\lambda(z)C_\lambda(w)\neq 0}}\frac{\chi^\lambda(e)}{C_\lambda(z)C_\lambda(w)}\,\chi^\lambda(\pi).
\end{equation}

\begin{theorem}{\rm (Conjugacy invariance, \cite[Theorem 3.1]{Collins-Matsumoto-Saad-2014})} 
\label{thm:CMS-C-conj-inv}
Let $T = (T_{ij})$ be an $n\times n$ Hermitian random matrix whose distribution
has the property that $UTU^\ast$ is distributed in the same way as $T$ for any unitary matrix $U$. 
For two sequences ${\bm i} = (i_1,\ldots,i_k)$ and ${\bm j} = (j_1,\ldots,j_k)$, we have
\begin{equation*} {\mathbb E}[T_{i_1j_1}T_{i_2j_2}\cdots T_{i_kj_k}] = 
\sum_{\sigma,\tau\in S_k} \delta_\sigma({\bm i},{\bm j})\,{\rm Wg}^U(\sigma^{-1}\tau;n)\ {\mathbb E}[{\rm Tr}_\tau(T)].
\end{equation*}
\end{theorem}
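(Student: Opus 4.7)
The plan is to exploit the unitary conjugation invariance assumption to reduce everything to the classical unitary Weingarten formula for monomials in the entries of a Haar-distributed unitary matrix, and then to recognise the contracted entry sums of $T$ that appear as traces indexed by the cycle structure of a permutation.

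First I would use the invariance hypothesis to write, for any fixed realisation of $T$,
\begin{equation*}
{\mathbb E}[T_{i_1 j_1}\cdots T_{i_k j_k}] \ = \ \int_{U(n)} {\mathbb E}\bigl[(UTU^\ast)_{i_1 j_1}\cdots (UTU^\ast)_{i_k j_k}\bigr]\,dU,
\end{equation*}
where the outer integral is with respect to Haar measure on $U(n)$, and then expand each factor as $(UTU^\ast)_{i_s j_s} = \sum_{a_s, b_s} U_{i_s a_s}\,T_{a_s b_s}\,\overline{U_{j_s b_s}}$. By Fubini and linearity, the inner expectation commutes with the $U(n)$ integral, and the problem reduces to evaluating
\begin{equation*}
\int_{U(n)} \prod_{s=1}^k U_{i_s a_s}\,\overline{U_{j_s b_s}}\,dU
\end{equation*}
for each multi-index $({\bm a}, {\bm b})$.

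Next I would invoke the classical unitary Weingarten formula, which expresses this $U(n)$-integral as
\begin{equation*}
\sum_{\sigma,\tau \in S_k} \delta_\sigma({\bm i},{\bm j})\,\delta_\tau({\bm a},{\bm b})\,{\rm Wg}^U(\sigma^{-1}\tau;n),
\end{equation*}
using exactly the Weingarten function on $S_k$ defined in the preliminaries (this is the ``single parameter'' case $w$ absent, with the Schur-orthogonality expression for ${\rm Wg}^U(\cdot;n)$ matching the standard derivation via Schur-Weyl duality). Substituting this and interchanging the order of summation, the expectation becomes
\begin{equation*}
\sum_{\sigma,\tau\in S_k} \delta_\sigma({\bm i},{\bm j})\,{\rm Wg}^U(\sigma^{-1}\tau;n)\,
\sum_{{\bm a},{\bm b}} \delta_\tau({\bm a},{\bm b})\,{\mathbb E}[T_{a_1 b_1}\cdots T_{a_k b_k}].
\end{equation*}

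Finally, I would identify the inner contraction with ${\mathbb E}[{\rm Tr}_\tau(T)]$: the constraints $b_s = a_{\tau(s)}$ force the product $\prod_s T_{a_s b_s}$ to factor along the cycles of $\tau$, each cycle of length $\mu_j$ producing a sum $\sum_{a} T_{a_1 a_2}T_{a_2 a_3}\cdots T_{a_{\mu_j} a_1} = {\rm Tr}(T^{\mu_j})$, so the total sum over ${\bm a}$ yields $\prod_j {\rm Tr}(T^{\mu_j}) = {\rm Tr}_\tau(T)$. Taking expectation gives the claimed formula. The only genuinely non-routine ingredient here is the unitary Weingarten formula itself; once it is accepted as a black box, the argument is a bookkeeping exercise. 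The main delicacy is to keep the conventions for $\delta_\sigma$, the inverse in $\sigma^{-1}\tau$, and the direction of the contraction consistent, since sign-like errors in these index permutations would garble the final identification with ${\rm Tr}_\tau(T)$.
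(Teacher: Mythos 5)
Your proof is correct and follows the same essential route as the one-sentence proof sketched in the paper's Remark following Theorems~\ref{thm:CMS-C-conj-inv}--\ref{thm:CMS-C-lr-inv}: reduce to a Haar-unitary integral and invoke the classical Weingarten formula. The one genuine difference is the initial reduction. The paper (following Collins--Matsumoto--Saad) first decomposes $T\stackrel{d}{=}UDU^\ast$ with $D$ diagonal carrying the eigenvalue law of $T$, $U$ Haar-distributed, and $U,D$ independent, so that after Weingarten one contracts only diagonal entries of $D$ and obtains ${\rm Tr}_\tau(D)={\rm Tr}_\tau(T)$. You instead leave $T$ intact and conjugate by an independent Haar $U$, so the Weingarten step contracts arbitrary entries $T_{a_s b_s}$ subject to the constraints $b_s=a_{\tau(s)}$ coming from $\delta_\tau({\bm a},{\bm b})$, and these constraints factor the product along the cycles of $\tau$ into ${\rm Tr}_\tau(T)$ exactly as you argue. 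Your variant is slightly cleaner: it uses the invariance hypothesis verbatim, needs neither the spectral theorem nor the argument that the diagonalizing $U$ can be taken Haar and independent of the spectrum, and in fact nowhere uses that $T$ is Hermitian -- only that its law is invariant under unitary conjugation. Both routes rest on the same two ingredients (the unitary Weingarten integral and the combinatorial identification of a $\tau$-contraction with a product of traces), and your handling of the conventions for $\delta_\sigma$, the argument $\sigma^{-1}\tau$ of the Weingarten function, and the direction of the index contraction is consistent with the paper's definitions.
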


\begin{theorem}{\rm (Left-right invariance, \cite[Theorem 3.4]{Collins-Matsumoto-Saad-2014})} 
\label{thm:CMS-C-lr-inv}
Let $X$ be a complex $n\times p$ random matrix which has the same distribution as $UXV$
for any unitary matrices $U,V$. Then, for four sequences ${\bm i} = (i_1,\ldots,i_k)$, ${\bm i}^\prime = (i_1^\prime,\ldots,i_k^\prime)$,
${\bm j} = (j_1,\ldots,j_k)$ and ${\bm j}^\prime = (j_1^\prime,\ldots,j_k^\prime)$, we have
\begin{equation*} {\mathbb E}\bigl[X_{i_1j_1}X_{i_2j_2}\cdots X_{i_kj_k}\overline{X_{i_1^\prime j_1^\prime}X_{i_2^\prime j_2^\prime}\cdots X_{i_k^\prime j_k^\prime}}\bigr] = 
\sum_{\sigma_1,\sigma_2,\tau\in S_k} \delta_{\sigma_1}({\bm i},{\bm i}^\prime)\delta_{\sigma_2}({\bm j},{\bm j}^\prime)\,
{\rm Wg}^U(\tau\sigma_1^{-1}\sigma_2;n,p)\ {\mathbb E}[{\rm Tr}_\tau(XX^\ast)].
\end{equation*}
\end{theorem}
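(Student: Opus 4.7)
The plan is to reduce the expectation to a product of two independent Haar-unitary averages and then invoke the (one-parameter) unitary Weingarten formula on each factor. Since the law of $X$ is invariant under $X\mapsto UXV$ for arbitrary unitaries $U\in U(n)$ and $V\in U(p)$, and since this invariance passes to averages, we may first replace $X$ by $UXV$ where $U$ and $V$ are independent Haar-distributed and independent of $X$:
\begin{equation*}
{\mathbb E}\bigl[X_{i_1j_1}\cdots X_{i_kj_k}\overline{X_{i'_1j'_1}\cdots X_{i'_kj'_k}}\bigr]
= {\mathbb E}_X\!\int_{U(n)}\!\int_{U(p)}\prod_{s=1}^k(UXV)_{i_sj_s}\,\overline{(UXV)_{i'_sj'_s}}\,dV\,dU.
\end{equation*}
Expanding $(UXV)_{ij}=\sum_{a,b}U_{ia}X_{ab}V_{bj}$ and pulling all summations outside, the integrand becomes a polynomial in entries of $U$, $\overline{U}$, $V$, $\overline{V}$ and $X$, $\overline{X}$, and by independence the three integrations factor.

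Second, I would apply the classical single-parameter unitary Weingarten formula to the $U$-integral (with parameter $n$) and, separately, to the $V$-integral (with parameter $p$), obtaining
\begin{equation*}
\sum_{\sigma,\tau\in S_k}\delta_\sigma({\bm i},{\bm i}')\,\delta_\tau({\bm a},{\bm a}')\,{\rm Wg}^U(\tau\sigma^{-1};n)
\quad\text{and}\quad
\sum_{\rho,\eta\in S_k}\delta_\rho({\bm b},{\bm b}')\,\delta_\eta({\bm j},{\bm j}')\,{\rm Wg}^U(\eta\rho^{-1};p).
\end{equation*}
The Kronecker constraints $\delta_\tau,\delta_\rho$ force $a'_s=a_{\tau(s)}$ and $b'_s=b_{\rho(s)}$, which leaves us summing, for each pair of permutations $\tau,\rho$, the expression ${\mathbb E}[\prod_s X_{a_s b_s}\overline{X_{a_{\tau(s)}b_{\rho(s)}}}]$ over the remaining free multi-indices $(a,b)$. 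Performing the sum over $a$ first collapses each factor to an entry of $X^\ast X$, then summing over $b$ yields, along each cycle of length $m$ of $\rho\tau^{-1}$, a factor $\mathrm{Tr}((X^\ast X)^m)=\mathrm{Tr}((XX^\ast)^m)$, so the whole multi-index sum equals $\mathrm{Tr}_{\rho\tau^{-1}}(XX^\ast)$.

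At this point, identifying $\sigma_1=\sigma$ and $\sigma_2=\eta$, and taking $\tau_*=\rho\tau^{-1}$ as the new summation label of the traces, the remaining inner sum over the "middle" permutation is, by the very definition of the convolution product on $S_k$, exactly ${\rm Wg}^U(\cdot\,;n)\ast{\rm Wg}^U(\cdot\,;p)$ evaluated at a conjugacy-invariant rearrangement of $\tau_*\sigma_1^{-1}\sigma_2$. The main obstacle is the combinatorial bookkeeping in this last step: one needs to track the composition of four permutations and exploit systematically the two key symmetries—that ${\rm Wg}^U(\cdot\,;z)$ is a class function (so $\mathrm{Wg}^U(gh;z)=\mathrm{Wg}^U(hg;z)$ and $\mathrm{Wg}^U(g^{-1};z)=\mathrm{Wg}^U(g;z)$), and that $\mathrm{Tr}_{\pi}(XX^\ast)=\mathrm{Tr}_{\pi^{-1}}(XX^\ast)$ (allowing the change of variable $\tau_*\mapsto\tau_*^{-1}$ in the outer sum)—in order to convert the natural form produced by the computation into the stated form $\mathrm{Wg}^U(\tau\sigma_1^{-1}\sigma_2;n,p)$.
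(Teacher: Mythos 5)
Your approach is sound and is essentially the same as the one the paper sketches in the remark following the two theorems; the only real variation is that the paper decomposes $X = UDV^\ast$ with $D$ diagonal (the singular-value decomposition, so each entry carries a single summation index), whereas you invoke the invariance to average $X$ over $U$ and $V$ directly, which gives two summation indices per entry. Both lead to the same formula, and your intermediate computations (factorization, the double Weingarten application, and the collapse of the inner sums to $\mathrm{Tr}_{\rho\tau^{-1}}(XX^\ast)$) are correct.

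There is, however, a genuine gap in the last step, which you flag yourself as ``the main obstacle.'' Carrying out the convolution carefully gives (with the trace labelled by $\gamma$ after the substitution $\gamma = \rho\tau^{-1}$ and the inversion $\gamma\mapsto\gamma^{-1}$)
\begin{equation*}
\sum_{\sigma_1,\sigma_2,\gamma\in S_k}\delta_{\sigma_1}(\bm{i},\bm{i}')\,\delta_{\sigma_2}(\bm{j},\bm{j}')\,
\mathrm{Wg}^U\!\bigl(\sigma_1^{-1}\gamma\,\sigma_2;\,n,p\bigr)\,\mathbb{E}\bigl[\mathrm{Tr}_\gamma(XX^\ast)\bigr],
\end{equation*}
and the issue is that $\sigma_1^{-1}\gamma\,\sigma_2$ is \emph{not} a conjugacy-invariant rearrangement of $\gamma\,\sigma_1^{-1}\sigma_2$: for instance, with $k=3$, $\sigma_1=(12)$, $\sigma_2=(13)$, $\gamma=(123)$ one has $\sigma_1^{-1}\gamma\,\sigma_2=(123)$ (cycle type $(3)$) but $\gamma\,\sigma_1^{-1}\sigma_2=e$ (cycle type $(1^3)$), so the two class-function values differ. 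The two symmetries you list --- that $\mathrm{Wg}^U$ is a class function and that $\mathrm{Tr}_\pi=\mathrm{Tr}_{\pi^{-1}}$ --- are therefore not sufficient by themselves. The missing ingredient is that $\mathrm{Tr}_\pi(XX^\ast)$ is invariant under conjugation of $\pi$ as well (since cycle type is a conjugacy invariant), which allows you to re-index the \emph{outer} trace sum: substituting $\gamma\mapsto\sigma_1\gamma\sigma_1^{-1}$ leaves $\mathbb{E}[\mathrm{Tr}_\gamma(XX^\ast)]$ unchanged while turning $\sigma_1^{-1}\gamma\,\sigma_2$ into $\gamma\,\sigma_1^{-1}\sigma_2$. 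Once you add this re-indexing to your toolkit, the final bookkeeping goes through and the stated identity follows.
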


\begin{remark}
The proof of either theorem proceeds along very similar lines: one notes that $T$ or $X$ has the same 
distribution as $UDU^\ast$ or $UDV^\ast$ respectively, where $D$ is a diagonal matrix 
(with the same distribution of eigenvalues or singular values as $T$ or $X$ respectively),
$U, V$ are Haar-distributed random unitary matrices, and $D$, $U$ and $V$ are all independent.
Then, once the integrals we are interested in are rewritten using these decompositions,
one invokes the following pivotal result in Weingarten calculus (see e.g. \cite[Corollary 3.4]{Collins-Sniady-2006}).

\begin{theorem} 
Let $U= (U_{ij})_{1\ls i,j\ls n}$ be an $n\times n$ Haar-distributed unitary matrix. For four sequences
${\bm i} = (i_1,\ldots,i_k)$, ${\bm i}^\prime = (i_1^\prime,\ldots,i_k^\prime)$,
${\bm j} = (j_1,\ldots,j_k)$ and ${\bm j}^\prime = (j_1^\prime,\ldots,j_k^\prime)$ of positive integers
in $[n]$, we have
\begin{equation*}
\int_{U(n)} U_{i_1j_1}U_{i_2j_2}\cdots U_{i_kj_k}\overline{U_{i_1^\prime j_1^\prime}U_{i_2^\prime j_2^\prime}\cdots U_{i_k^\prime j_k^\prime}}\,dU = \sum_{\sigma,\tau\in S_k} \delta_{\sigma}({\bm i},{\bm i}^\prime)\delta_{\tau}({\bm j},{\bm j}^\prime)\,{\rm Wg}^U(\sigma^{-1}\tau;n).
\end{equation*}
\end{theorem}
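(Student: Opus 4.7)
The plan is to exploit the left- and right-invariance of the Haar measure on $U(n)$ together with Schur--Weyl duality to pin down the structure of the integral, and then to identify the resulting coefficients as the Weingarten function via the orthogonality relations for unitary matrix entries.

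First I would rewrite the integrand as a single matrix coefficient of $U^{\otimes k}\otimes \overline{U}^{\otimes k}$ acting on $({\mathbb C}^n)^{\otimes k}\otimes ({\mathbb C}^n)^{\otimes k}$, so that the Haar integral becomes the orthogonal projection $P$ onto the $U(n)$-invariant subspace (where $U(n)$ acts diagonally by $U\mapsto U\otimes \overline{U}$ on each tensor factor). By Schur--Weyl duality, the commutant of the diagonal $U(n)$-action on $({\mathbb C}^n)^{\otimes k}$ is spanned by the permutation operators $\rho(\sigma)$, $\sigma\in S_k$, whose matrix entries in the standard basis are exactly the factors $\delta_\sigma({\bm i},{\bm i}')$. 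Hence, fixing ${\bm j}, {\bm j}'$ and using left-invariance $U\mapsto VU$, the integral, viewed as a tensor in $({\bm i}, {\bm i}')$, must lie in the span of $\{\delta_\sigma({\bm i}, {\bm i}')\}_{\sigma \in S_k}$; analogously, right-invariance $U\mapsto UW$ forces the dependence on $({\bm j}, {\bm j}')$ to lie in the span of $\{\delta_\tau({\bm j}, {\bm j}')\}_{\tau \in S_k}$. This yields an expansion
\begin{equation*}
\int_{U(n)} U_{i_1j_1}\cdots U_{i_kj_k}\,\overline{U_{i_1'j_1'}\cdots U_{i_k'j_k'}}\,dU
 \;=\; \sum_{\sigma,\tau\in S_k} W(\sigma,\tau)\,\delta_\sigma({\bm i},{\bm i}')\,\delta_\tau({\bm j}, {\bm j}')
\end{equation*}
for some coefficient function $W$ on $S_k\times S_k$. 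Relabeling all four index sequences by a common permutation $\pi\in S_k$ leaves the integral invariant while sending $(\sigma,\tau)\mapsto (\pi\sigma\pi^{-1},\pi\tau\pi^{-1})$, and separately relabeling only one of the pairs $({\bm i},{\bm i}')$ or $({\bm j},{\bm j}')$ shows that $W(\sigma,\tau)$ depends only on the relative permutation $\sigma^{-1}\tau$; thus we can write $W(\sigma,\tau)=W(\sigma^{-1}\tau)$.

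To determine $W$, I would use the unitarity relations $\sum_{a}\overline{U_{ab}}\,U_{ac}=\delta_{bc}$. Contracting the displayed identity with $\delta_{i_k, i_k'}$ and summing over $i_k=i_k'$ reduces the integral on the left to an integral of $2(k-1)$ unitary entries, and on the right produces the sum $\sum_{\sigma\in S_k} n^{c(\sigma^{-1}\tau_0)}\,W(\sigma)$ for each fixed $\tau_0$, where $c(\pi)$ is the number of cycles of $\pi$. Iterating this (or doing all $k$ contractions at once) yields a linear system on the group algebra ${\mathbb C}[S_k]$, with Gram matrix $G(\sigma,\tau)=n^{c(\sigma^{-1}\tau)}$, whose solution is uniquely characterized. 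Decomposing ${\mathbb C}[S_k]$ into its central idempotents $e_\lambda=\frac{\chi^\lambda(e)}{k!}\sum_\pi \chi^\lambda(\pi)\pi$ diagonalises $G$ with eigenvalues $C_\lambda(n)$ on each isotypic component (this is essentially the Schur--Weyl decomposition of $({\mathbb C}^n)^{\otimes k}$ restricted to the centre, with $C_\lambda(n)=\dim V_\lambda^{U(n)}\cdot (k!/\chi^\lambda(e))$ up to normalisation). Inverting eigenvalue-by-eigenvalue and reading off the central character expansion recovers
\begin{equation*}
W(\pi)=\frac{1}{k!}\sum_{\lambda\vdash k,\,C_\lambda(n)\neq 0}\frac{\chi^\lambda(e)}{C_\lambda(n)}\,\chi^\lambda(\pi)={\rm Wg}^U(\pi;n).
\end{equation*}

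The main obstacle is the final identification step: while the tensor-algebraic reduction to the form $\sum W(\sigma,\tau)\delta_\sigma\delta_\tau$ is essentially formal once one invokes Schur--Weyl duality, showing that the coefficient $W$ is exactly the character sum ${\rm Wg}^U(\pi;n)$ requires handling the cases when $C_\lambda(n)=0$ (i.e.\ $n< l(\lambda)$), in which case $G$ is singular and one must work with its Moore--Penrose pseudoinverse. One has to verify that those partitions contribute zero on both sides --- on the representation side because $V_\lambda$ is absent from $({\mathbb C}^n)^{\otimes k}$ when $l(\lambda)>n$, and on the Weingarten side by the very prescription that restricts the sum to $C_\lambda(n)\neq 0$. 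Once this is checked, uniqueness of the pseudoinverse and the Schur orthogonality relations close the argument.
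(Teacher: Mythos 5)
The theorem you are asked about is not proved in the paper; it is quoted verbatim from Collins and \'Sniady \cite{Collins-Sniady-2006} as the pivotal input for the integration formulas used in Sections~4 and~5. So there is no ``paper proof'' to compare against, and the right benchmark is the Collins--\'Sniady argument, which is indeed the Schur--Weyl plus Gram-matrix-inversion route you outline. Your overall plan is therefore the standard one and would work once tightened; however there is one concrete logical error, and one step that is too loosely sketched to stand on its own.

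The error is in your derivation of $W(\sigma,\tau)=W(\sigma^{-1}\tau)$. You claim that ``separately relabeling only one of the pairs $({\bm i},{\bm i}')$ or $({\bm j},{\bm j}')$'' leaves the integral invariant while conjugating only one of $\sigma,\tau$. It does not: replacing $(\bm i,\bm i')$ by $(\bm i\circ\pi,\bm i'\circ\pi)$ without touching $(\bm j,\bm j')$ produces the integrand $U_{i_{\pi(1)}j_1}\cdots \overline{U_{i'_{\pi(k)}j'_k}}$, which is a different monomial and has a different integral in general. The invariances you actually have come from reordering factors inside the product: relabeling all the \emph{unprimed} sequences $\bm i,\bm j$ by a common $\pi$ merely reorders the $k$ factors $U_{i_sj_s}$ and so fixes the integral, while on the delta side $\delta_\sigma(\bm i\circ\pi,\bm i')=\delta_{\pi\sigma}(\bm i,\bm i')$ and $\delta_\tau(\bm j\circ\pi,\bm j')=\delta_{\pi\tau}(\bm j,\bm j')$; comparing coefficients (for $n\geqslant k$, where the $\delta_\sigma\delta_\tau$ are linearly independent) yields $W(\pi\sigma,\pi\tau)=W(\sigma,\tau)$. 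Relabeling all the \emph{primed} sequences by $\rho$ similarly gives $W(\sigma\rho,\tau\rho)=W(\sigma,\tau)$. Combining the two (take $\pi=\sigma^{-1}$, then vary $\rho$) gives both that $W$ depends only on $\sigma^{-1}\tau$ \emph{and} that it is a class function of that product --- you need the latter as well, since your final formula is a sum of irreducible characters and not an arbitrary function on $S_k$.

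The other weak point is the contraction step. Contracting a single index pair $\delta_{i_k,i'_k}$ does reduce the left-hand side to a $(k-1)$-moment times $\delta_{j_k,j'_k}$ (by unitarity), but the right-hand side is still a sum over $S_k\times S_k$, and it is not a routine matter to match these term by term. The cleaner version of the idea you already have is to contract the whole identity against the permutation tensors $\delta_\alpha(\bm i,\bm i')\delta_\beta(\bm j,\bm j')$ for $\alpha,\beta\in S_k$; this produces on the right the Gram matrix $G(\sigma,\tau)=n^{c(\sigma^{-1}\tau)}$ (from $\sum_{\bm i}\delta_\sigma(\bm i,\bm i)\,\delta_\alpha(\bm i,\bm i)= n^{c(\sigma^{-1}\alpha)}$) applied to $W$, and on the left a trace which, by the conditional-expectation interpretation, equals $n^{c(\alpha^{-1}\beta)}$. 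Inverting $G$ on the group algebra by passing to the central idempotents $e_\lambda$ and using $G e_\lambda = C_\lambda(n)\, e_\lambda$ then gives exactly your expression for ${\rm Wg}^U$. Your remark about the singular cases $C_\lambda(n)=0$ (which occur when $l(\lambda)>n$, so $V_\lambda$ does not appear in $({\mathbb C}^n)^{\otimes k}$) and the use of the pseudoinverse is the correct way to handle $n<k$, and is indeed the subtlety that the original Collins--\'Sniady proof takes care to address.
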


\end{remark}

\subsubsection{The orthogonal case}

For every $\sigma\in S_{2k}$ we can consider an undirected graph $G(\sigma)$ with vertices $1,2,\ldots,2k$ and edge set consisting of
\begin{equation*} \bigl\{\{2i-1,2i\} : i=1,2,\ldots,k\bigr\}\cup\bigl\{\{\sigma(2i-1),\sigma(2i)\} : i=1,2,\ldots,k\bigr\}\end{equation*}
(note that we consider as different every two edges of the form $\{2i-1,2i\}$ and $\{\sigma(2j-1),\sigma(2j)\}$ even if the sets coincide).
Then each vertex lies on exactly two edges, and the number of vertices in each connected component is even. If the numbers of vertices
in the connected components are $2\mu_1\gs 2\mu_2\gs \cdots \gs 2\mu_l$, then the sequence $\mu = (\mu_1,\mu_2,\ldots,\mu_l)$ is a partition of $k$
which is called the coset-type of $\sigma$. 

Let $M_{2k}$ be the set of all pair partitions of the set $[2k]=\{1,\ldots,2k\}$. A pair partition $\sigma \in M_{2k}$ can be uniquely expressed
in the form
\begin{equation*}\sigma = \bigl\{\{\sigma(1),\sigma(2)\},\{\sigma(3),\sigma(4)\},\ldots,\{\sigma(2k-1),\sigma(2k)\}\bigr\} \end{equation*}
where $1=\sigma(1) < \sigma(3)<\cdots <\sigma(2i-1)<\cdots<\sigma(2k-1)$ and $\sigma(2i-1) < \sigma(2i)$ for every $1\ls i\ls k$.
Then $\sigma$ can also be regarded as a permutation $\left(\begin{array}{cccc}1&2&\cdots&2k\\ \sigma(1)&\sigma(2)&\cdots&\sigma(2k)\end{array}\right)$
in $S_{2k}$. In this way we can embed $M_{2k}$ into $S_{2k}$ (in particular, we can talk about the coset-type of a pair partition $\sigma\in M_{2k}$).

\smallskip

For a permutation $\sigma\in S_{2k}$ and a $2k$-tuple ${\bm i}=(i_1,i_2,\ldots,i_{2k})$ of positive integers, set
\begin{equation} \delta_\sigma^\prime({\bm i}) = \prod_{s=1}^k\delta_{i_{\sigma(2s-1)},i_{\sigma(2s)}}.\end{equation}
In particular, if $\sigma\in M_{2k}$, then we can more simply write $\delta_\sigma^\prime({\bm i}) = \prod\limits_{\{a,b\}\in \sigma} \delta_{i_a,i_b}$.

Given a square matrix $A$ and $\sigma\in S_{2k}$ with coset-type $\mu = (\mu_1,\mu_2,\ldots,\mu_l)$, set
\begin{equation} {\rm Tr}_\sigma^\prime(A) = \prod_{j=1}^l{\rm Tr}(A^{\mu_j})\,.\end{equation}
Finally, given a partition $\lambda$ of $k$ and a number $z\in {\mathbb C}$, define
\begin{equation} C_\lambda^\prime(z) = \prod_{i=1}^{l(\lambda)}\prod_{j=1}^{\lambda_i}(z+2j-i-1) \end{equation}
(again $C_\lambda^\prime(n) = J_\lambda^2(1^n)$, see \cite[Theorem 5.4]{Stanley-1989}).

\medskip

To be able to give the analogous definition for the \emph{orthogonal Weingarten function} 
to the one we gave above in the unitary case, we need first to recall the definition 
of the \emph{zonal spherical functions} on $S_{2k}$. Let $H_k$ be the hyperoctahedral group of order $2^kk!$;
this can be realised as the subgroup of $S_{2k}$ generated by adjacent tranpositions $(2i\!-\!1 \ \ 2i)$ for any $1\ls i\ls k$ and double transpositions
of the form $(2i\!-\!1 \ \ 2j\!-\!1)(2i \  \ 2j)$ for any $1\ls i < j\ls k$. Then for each partition $\lambda$ of $k$,
consider the partition $2\lambda = (2\lambda_1,2\lambda_2,\ldots,2\lambda_{l(\lambda)})$ of $2k$ 
and the corresponding character $\chi^{2\lambda}$ of $S_{2k}$, and define the zonal spherical function 
$\omega^\lambda$
corresponding to $\lambda$ by
\begin{equation} \sigma\in S_{2k} \quad \mapsto \quad \omega^\lambda(\sigma) : = \frac{1}{2^kk!}\bigl(\chi^{2\lambda}\ast {\bf 1}_{H_k}\bigr)(\sigma)
= \frac{1}{2^kk!}\, \sum_{\pi\in S_{2k}}\chi^{2\lambda}(\sigma\pi)\ {\bf 1}_{H_k}\bigl(\pi^{-1}\bigr). \end{equation}
Given that $H_k$ is a subgroup of $S_{2k}$ and that $M_{2k}$ contains a unique representative of each 
{\bf left} coset $\sigma H_k$ of $H_k$ in $S_{2k}$,
this definition can be rewritten in a somewhat simpler way:
\begin{equation}\omega^\lambda(\sigma) = \frac{1}{2^kk!}\,\sum_{\tau\in M_{2k}}\,\sum_{\zeta\in H_k}\chi^{2\lambda}\bigl(\sigma\,\tau\,\zeta\bigr)\ {\bf 1}_{H_k}\bigl((\tau\zeta)^{-1}\bigr) 
= \frac{1}{2^kk!}\sum_{\zeta\in H_k}\chi^{2\lambda}(\sigma\zeta).\end{equation}
Recall finally that the zonal sperical functions $\omega^\lambda$ corresponding to partitions $\lambda$ of $k$ form a linear basis
of $L(S_{2k},H_k)$, the space of all complex-valued functions on $S_{2k}$ which are $H_k$-bi-invariant, that is, the set
\begin{equation*} \bigl\{f: S_{2k}\to {\mathbb C}\mid f(\zeta\sigma) = f(\sigma\zeta) = f(\sigma)\ \hbox{for every}\ \sigma \in S_{2k}, \zeta\in H_k\bigr\}. \end{equation*}

\bigskip

We now define the orthogonal Weingarten function on $S_{2k}$ with one complex parameter $z\in {\mathbb C}$ 
(see \cite{Collins-Matsumoto-2009} or \cite{Collins-Matsumoto-Saad-2014}):
\begin{equation} \sigma\in S_{2k} \quad \mapsto \quad 
{\rm Wg}^O(\sigma;z) := \frac{2^kk!}{(2k)!}\sum_{\substack{\lambda\vdash k\\C_\lambda^\prime(z)\neq 0}}\frac{\chi^{2\lambda}(e)}{C_\lambda^\prime(z)}\,\omega^\lambda(\sigma).  \end{equation}
Note that all $\omega^\lambda$, and therefore also $Wg^O(\cdot;z)$, take the same value
at permutations $\sigma_1, \sigma_2$ with the same coset-type (where equivalently 
$\sigma_1$ has the same coset-type as $\sigma_2$ if and only if $\sigma_1\in H_k\sigma_2 H_k$). 

\begin{theorem}{\rm (Conjugacy invariance, \cite[Theorem 3.3]{Collins-Matsumoto-Saad-2014})} 
\label{thm:CMS-R-conj-inv}
Let $T = (T_{ij})$ be an $n\times n$ real symmetric random matrix with the invariance property that
$OTO^t$ has the same distribution as $T$ for any orthogonal matrix $O$. 
For any sequence ${\bm i} = (i_1,\ldots,i_{2k})$, we have
\begin{equation*} {\mathbb E}\bigl[T_{i_1i_2}T_{i_3i_4}\cdots T_{i_{2k-1}i_{2k}}\bigr] = 
\sum_{\sigma,\tau\in M_{2k}} \delta_\sigma^\prime({\bm i})\,{\rm Wg}^O(\sigma^{-1}\tau;n)\ {\mathbb E}\bigl[{\rm Tr}_\tau^\prime(T)\bigr].
\end{equation*}
\end{theorem}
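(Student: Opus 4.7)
The plan is to mimic the remark describing the unitary proof, replacing $U(n)$-invariance with $O(n)$-invariance and the unitary Weingarten formula with its orthogonal analogue. First I would use the spectral theorem together with the invariance assumption to write $T \stackrel{d}{=} O D O^t$, where $D$ is the diagonal matrix of eigenvalues of $T$, $O$ is Haar-distributed on $O(n)$, and $O$ and $D$ are independent. This step is standard: one diagonalises almost surely and observes that the distribution of $O$, once the eigenvalues are conditioned upon, is forced by orthogonal invariance to be Haar.

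Next I would expand each entry as $T_{i_a i_b} = \sum_{c} D_{cc}\, O_{i_a, c}\, O_{i_b, c}$ and multiply out, obtaining
\begin{equation*}
{\mathbb E}\Bigl[\prod_{s=1}^k T_{i_{2s-1}i_{2s}}\Bigr]
= \sum_{c_1,\ldots,c_k} {\mathbb E}\Bigl[\prod_{s=1}^k D_{c_s c_s}\Bigr]\cdot {\mathbb E}\Bigl[\prod_{s=1}^k O_{i_{2s-1},c_s}\,O_{i_{2s},c_s}\Bigr].
\end{equation*}
The inner $O$-integral is a moment of $2k$ entries of a Haar orthogonal matrix, to which I would apply the orthogonal analogue of the Weingarten formula stated just before the theorem for the unitary case: for sequences ${\bm i},{\bm j}\in [n]^{2k}$,
\begin{equation*}
\int_{O(n)} O_{i_1 j_1}\cdots O_{i_{2k} j_{2k}}\,dO
= \sum_{\sigma,\tau\in M_{2k}} \delta_\sigma^\prime({\bm i})\,\delta_\tau^\prime({\bm j})\,{\rm Wg}^O(\sigma^{-1}\tau;n).
\end{equation*}
(This identity is the orthogonal counterpart cited in the same source and is used in exactly the role the remark indicates.) I would treat it as a black box.

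Once inserted, the ${\bm j}$-sequence has the specific form $j_{2s-1}=j_{2s}=c_s$, so that $\delta_\sigma^\prime({\bm i})$ factors out of the $c$-sum while $\delta_\tau^\prime({\bm j})$ imposes precisely that $j_a=j_b$ whenever $\{a,b\}\in\tau$. Combined with the ``forced'' pairs $\{2s-1,2s\}$ coming from the choice $j_{2s-1}=j_{2s}$, these constraints identify indices along each connected component of the graph $G(\tau)$ described just before the theorem. If that graph has components of sizes $2\mu_1\gs 2\mu_2\gs\cdots\gs 2\mu_l$, i.e.\ $\mu$ is the coset-type of $\tau$, then the free summation parameters are one value per component, and the $D$-moment collapses to
\begin{equation*}
\sum_{c^{(1)},\ldots,c^{(l)}} \prod_{j=1}^l D_{c^{(j)} c^{(j)}}^{\mu_j}
= \prod_{j=1}^l {\rm Tr}(D^{\mu_j}) = {\rm Tr}_\tau^\prime(D) = {\rm Tr}_\tau^\prime(T),
\end{equation*}
the last equality because $T$ and $D$ have the same spectrum. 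Taking expectations and reindexing gives the claimed formula.

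The step I expect to require the most care is the combinatorial one: verifying that the joint constraints coming from $\delta_\tau^\prime({\bm j})$ and from $j_{2s-1}=j_{2s}$ partition the index set $[2k]$ exactly into the connected components of $G(\tau)$, so that the surviving $D$-moment is precisely ${\rm Tr}_\tau^\prime(D)$ with the right multiplicities $\mu_j$. The definitions of coset-type, of $\delta_\sigma^\prime$, and of $M_{2k}$ as a system of representatives of left cosets of $H_k$ are all tailored to make this bookkeeping work, but it is the one place where a sign error or a miscount between left/right cosets or between $H_k$ and $M_{2k}$ would destroy the identity; I would double-check it against the conventions fixed in the preliminary discussion of ${\rm Wg}^O$.
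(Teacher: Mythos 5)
Your proposal is correct and follows essentially the same approach that the paper sketches in the remark immediately after Theorems \ref{thm:CMS-R-conj-inv} and \ref{thm:CMS-R-lr-inv}: decompose $T$ as $ODO^t$ with $O$ Haar on $O(n)$, $D$ diagonal and independent of $O$, expand the product of entries, and apply the quoted orthogonal Weingarten formula for Haar moments. The paper itself does not carry out the combinatorial reduction (it cites \cite{Collins-Matsumoto-Saad-2014} for the full argument), but your bookkeeping of how the constraints $j_{2s-1}=j_{2s}$ together with $\delta_\tau^\prime({\bm j})$ collapse along the connected components of $G(\tau)$ to produce $\mathrm{Tr}_\tau^\prime(D)=\mathrm{Tr}_\tau^\prime(T)$ is exactly the missing step and is correct.
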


\begin{theorem}{\rm (Left-right invariance, \cite[Theorem 3.5]{Collins-Matsumoto-Saad-2014})} 
\label{thm:CMS-R-lr-inv}
Let $X$ be a real $n\times p$ random matrix which has the same distribution as $OXQ$
for any orthogonal matrices $O,Q$. Then, for two sequences ${\bm i} = (i_1,\ldots,i_{2k})$
and ${\bm j} = (j_1,\ldots,j_{2k})$, we have
\begin{equation*} {\mathbb E}\bigl[X_{i_1j_1}X_{i_2j_2}\cdots X_{i_{2k}j_{2k}}\bigr] = 
\sum_{\sigma_1,\sigma_2,\tau_1,\tau_2\in M_{2k}} \delta_{\sigma_1}^\prime({\bm i})\delta_{\sigma_2}^\prime({\bm j})\,
{\rm Wg}^O(\sigma_1^{-1}\tau_1;n)\,{\rm Wg}^O(\sigma_2^{-1}\tau_2;p)\ {\mathbb E}[{\rm Tr}_{\tau_1^{-1}\tau_2}^\prime(XX^t)].
\end{equation*}
\end{theorem}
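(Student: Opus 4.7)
The plan is to imitate the strategy sketched in the Remark after Theorem~\ref{thm:CMS-C-lr-inv}, now in the orthogonal setting: decompose $X$ through an orthogonal singular value decomposition, integrate out the two independent Haar-distributed orthogonal factors entry-by-entry using the orthogonal analogue of the Haar-unitary formula, and then recognise what remains as a trace expression in $XX^t$ via the coset-type statistic.

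First, by the left-right invariance of $X$ we may write $X \stackrel{d}{=} ODQ$, where $O$ is Haar-distributed on $O(n)$, $Q$ is Haar-distributed on $O(p)$, and $D$ is the $n\times p$ diagonal matrix carrying the singular values of $X$ (padded with zero rows or columns if $n\neq p$), with $O$, $D$, $Q$ mutually independent. Expanding $X_{i_s j_s} = \sum_{a_s, b_s} O_{i_s a_s}\,D_{a_s b_s}\,Q_{b_s j_s}$, substituting into $\prod_{s=1}^{2k} X_{i_s j_s}$, and using independence factors the expectation as a triple sum over $\bm a, \bm b$ of three separate expectations. The $D$-factor forces $\bm a = \bm b$, while the two Haar integrals can be evaluated with the orthogonal analogue of the Haar-unitary identity, namely
\begin{equation*}
\int_{O(n)} O_{i_1 a_1}\cdots O_{i_{2k} a_{2k}}\,dO \;=\; \sum_{\sigma,\tau\in M_{2k}} \delta_\sigma'({\bm i})\,\delta_\tau'({\bm a})\,{\rm Wg}^O(\sigma^{-1}\tau;n),
\end{equation*}
and similarly for $O(p)$. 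After a routine relabelling of dummy summation indices (using that ${\rm Wg}^O$ is a class function on $H_k$-double cosets, so invariant under inversion), this produces the weights $\delta_{\sigma_1}'({\bm i})\,\delta_{\sigma_2}'({\bm j})\,{\rm Wg}^O(\sigma_1^{-1}\tau_1;n)\,{\rm Wg}^O(\sigma_2^{-1}\tau_2;p)$ and leaves the internal sum
\begin{equation*}
\sum_{\bm a}\, \delta_{\tau_1}'({\bm a})\,\delta_{\tau_2}'({\bm a})\,\prod_{s=1}^{2k} D_{a_s a_s}.
\end{equation*}

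The product of delta factors forces $\bm a$ to be constant on each connected component of the graph $G(\tau_1^{-1}\tau_2)$. By the definition of the coset-type, if $\mu = (\mu_1,\ldots,\mu_l)$ is the coset-type of $\tau_1^{-1}\tau_2$, these components have sizes $2\mu_1,\ldots,2\mu_l$, so each component of size $2\mu_j$ contributes $\sum_a D_{a,a}^{2\mu_j} = {\rm Tr}\bigl[(DD^t)^{\mu_j}\bigr] = {\rm Tr}\bigl[(XX^t)^{\mu_j}\bigr]$, where the last equality uses $XX^t = O(DD^t)O^t$ and the cyclic invariance of the trace. The product over $j$ is then precisely ${\rm Tr}_{\tau_1^{-1}\tau_2}'(XX^t)$; taking an outer expectation over $D$ and reassembling with the two Weingarten factors yields the claimed identity.

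The main technical obstacle lies in this last combinatorial identification: one must carefully verify that superimposing the two pair partitions $\tau_1,\tau_2\in M_{2k}$ as undirected matchings on $\{1,\ldots,2k\}$ produces a disjoint union of even cycles of lengths $2\mu_1,\ldots,2\mu_l$, where $(\mu_1,\ldots,\mu_l)$ is \emph{exactly} the coset-type of the permutation $\tau_1^{-1}\tau_2 \in S_{2k}$ (and not merely its cycle-type). This is the classical but slightly subtle characterisation of $(H_k,H_k)$-double cosets in $S_{2k}$, and it is precisely what singles out the coset-type — rather than the cycle-type — as the relevant statistic governing the trace factor in the orthogonal case.
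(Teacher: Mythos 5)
Your proposal is correct and follows essentially the same route the paper sketches in the Remark after Theorem~\ref{thm:CMS-R-lr-inv}: decompose $X \stackrel{d}{=} ODQ$ with independent Haar orthogonal factors, expand entrywise, integrate out $O$ and $Q$ with the orthogonal Haar--Weingarten formula, and recognise the residual sum as a product of traces of powers of $DD^t = $ (up to conjugation) $XX^t$. The paper only gestures at this (citing \cite[p.~9]{Collins-Matsumoto-Saad-2014} for the details, and noting its statement differs slightly from the one there), so your filled-in derivation is a faithful reconstruction, not a different approach.

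Two small points that are worth making explicit. First, the ``invariance under inversion'' you invoke to rewrite ${\rm Wg}^O(\tau_2^{-1}\sigma_2;p)$ as ${\rm Wg}^O(\sigma_2^{-1}\tau_2;p)$ is not merely the statement that ${\rm Wg}^O$ is constant on $H_k$-double cosets; one also needs that a permutation and its inverse share the same coset-type. This does hold, and by the same one-line graph argument as your ``main technical obstacle'': the bijection $\sigma$ on vertices sends $G(\sigma^{-1})$ to $G(\sigma)$, so the two graphs have identical component-size data. Second, that ``obstacle'' itself is less subtle than you suggest. The graph whose edges are the two matchings $\bigl\{\{\tau_1(2i-1),\tau_1(2i)\}\bigr\}_i \cup \bigl\{\{\tau_2(2i-1),\tau_2(2i)\}\bigr\}_i$ is isomorphic to $G(\tau_1^{-1}\tau_2)$ under the vertex bijection $\tau_1$, because $\tau_1$ sends $\{2i-1,2i\}$ to $\{\tau_1(2i-1),\tau_1(2i)\}$ and $\{\tau_1^{-1}\tau_2(2i-1),\tau_1^{-1}\tau_2(2i)\}$ to $\{\tau_2(2i-1),\tau_2(2i)\}$. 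Hence the component sizes of the superimposed matchings are exactly $2\mu_1,\ldots,2\mu_l$ with $\mu$ the coset-type of $\tau_1^{-1}\tau_2$, which closes the argument.
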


\begin{remark}
Again the proof of the theorems follows from a decomposition of $T$ or $X$ as $ODO^t$
or $ODQ^t$ respectively (with $D$ diagonal with the same distribution of eigenvalues or singular values
as $T$ or $X$ respectively, $O$ and $Q$ Haar-distributed random orthogonal matrices, and $D, O$ and $Q$
independent), combined with the use of the following result (see \cite[Corollary 3.4]{Collins-Sniady-2006} 
and \cite{Collins-Matsumoto-2009}).

\begin{theorem}
Let $O= (O_{ij})_{1\ls i,j\ls n}$ be an $n\times n$ Haar-distributed orthogonal matrix. For sequences
${\bm i} = (i_1,\ldots,i_{2k})$, ${\bm j} = (j_1,\ldots,j_{2k})$ of positive integers in $[n]$, we have
\begin{equation*}
\int_{O(n)} O_{i_1j_1}O_{i_2j_2}\cdots O_{i_{2k}j_{2k}}\,dO = \sum_{\sigma,\tau\in M_{2k}} \delta_{\sigma}^\prime({\bm i})\delta_{\tau}^\prime({\bm j})\,{\rm Wg}^O(\sigma^{-1}\tau;n).
\end{equation*}
\end{theorem}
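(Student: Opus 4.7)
The plan is to interpret the integral as matrix entries of the orthogonal projection onto the $O(n)$-invariants in $(\mathbb{R}^n)^{\otimes 2k}$, to describe those invariants via Weyl's first fundamental theorem, and then to invert the resulting Gram matrix using the harmonic analysis of the Gelfand pair $(S_{2k},H_k)$. First I would rewrite the integrand as the $(\bm i,\bm j)$-entry of $O^{\otimes 2k}$ so that the integral equals the corresponding entry of
\[\mathcal{E}\,:=\,\int_{O(n)}O^{\otimes 2k}\,dO.\]
The left- and right-invariance of Haar measure together with Fubini give $\mathcal{E}^2=\mathcal{E}$ and $\mathcal{E}^{\top}=\mathcal{E}$, while $P^{\otimes 2k}\mathcal{E}=\mathcal{E}$ for every $P\in O(n)$; thus $\mathcal{E}$ is the orthogonal projection onto the subspace $V^{O(n)}$ of $O(n)$-invariants. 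By Weyl's first fundamental theorem for the orthogonal group, $V^{O(n)}$ is spanned by the pair-partition tensors $v_\sigma$ with $(v_\sigma)_{\bm i}=\delta'_\sigma(\bm i)$, indexed by $\sigma\in M_{2k}$. Hence $\mathcal{E}=\sum_{\sigma,\tau\in M_{2k}}W(\sigma,\tau)\,v_\sigma v_\tau^{\top}$ for a unique matrix $W$, and reading off the $(\bm i,\bm j)$-entry reduces the theorem to showing $W(\sigma,\tau)={\rm Wg}^O(\sigma^{-1}\tau;n)$.

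Since $\mathcal{E}$ is the projection onto $\mathrm{span}\{v_\sigma\}$, the matrix $W$ is characterised as the (pseudo-)inverse of the Gram matrix $G(\sigma,\tau):=\langle v_\sigma,v_\tau\rangle$, so the problem reduces to inverting $G$. A direct count of the free indices in the superposition graph of the two pair partitions (whose connected components have vertex counts $2\mu_1,\ldots,2\mu_l$) gives $G(\sigma,\tau)=n^{l(\mu(\sigma^{-1}\tau))}$, where $\mu(\pi)$ denotes the coset-type of $\pi\in S_{2k}$ under the embedding $M_{2k}\hookrightarrow S_{2k}$: indeed, applying the bijection $\sigma\colon[2k]\to[2k]$ carries the graph defining the coset-type of $\sigma^{-1}\tau$ to the superposition graph of $(\sigma,\tau)$. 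In particular $G$ depends only on the double coset $H_k(\sigma^{-1}\tau)H_k$, so as a function of $\pi=\sigma^{-1}\tau$ it lies in the algebra $L(S_{2k},H_k)$ of $H_k$-bi-invariant functions, and its inversion becomes a question in the harmonic analysis of the Gelfand pair $(S_{2k},H_k)$.

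The main obstacle, and really the content of the theorem, is to establish the zonal-spherical expansion
\[n^{l(\mu(\pi))}\;=\;\sum_{\lambda\vdash k}\frac{(2k)!}{2^{k}k!\,\chi^{2\lambda}(e)}\,C'_\lambda(n)\,\omega^\lambda(\pi),\]
i.e.\ that the spherical Fourier coefficients of $G$ are precisely the $C'_\lambda(n)$. This identity is essentially Stanley's evaluation $C'_\lambda(n)=J^{2}_\lambda(1^{n})$ packaged through the definition of $\omega^\lambda$ as a normalised induced character; the cleanest route is to test both sides against $\omega^\mu$, use the definition of $\omega^\mu$ to reduce the left-hand side to an integral over $H_k$, and match the result with the evaluation of the monic Jack polynomial $P^{2}_\mu$ at $(1^{n})$. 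Once this expansion is in hand, the orthogonality of the $\omega^\lambda$ inside $L(S_{2k},H_k)$ automatically produces the (pseudo-)inverse
\[W(\pi)\,=\,\frac{2^{k}k!}{(2k)!}\sum_{\substack{\lambda\vdash k\\ C'_\lambda(n)\neq 0}}\frac{\chi^{2\lambda}(e)}{C'_\lambda(n)}\,\omega^\lambda(\pi),\]
which is precisely ${\rm Wg}^O(\pi;n)$; the degenerate small-$n$ partitions with $C'_\lambda(n)=0$ correspond exactly to the kernel of $G$ in $L(S_{2k},H_k)$ and so are correctly excluded from the pseudo-inverse, completing the proof.
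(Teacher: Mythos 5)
A remark on the comparison first: the paper never proves this theorem --- it imports it from Collins--\'{S}niady and Collins--Matsumoto --- so your argument can only be measured against that literature, and your architecture is exactly the standard route there: $\mathcal{E}=\int_{O(n)}O^{\otimes 2k}\,dO$ is the orthogonal projection onto the $O(n)$-invariants, Weyl's first fundamental theorem identifies the invariants with the span of the pair-partition vectors $v_\sigma$, the Gram matrix is $G(\sigma,\tau)=n^{l(\mu(\sigma^{-1}\tau))}$, and ${\rm Wg}^O$ should emerge as its (pseudo-)inverse computed through the Gelfand pair $(S_{2k},H_k)$. All of that is sound (your ``unique matrix $W$'' is inaccurate when the $v_\sigma$ are dependent, but your pseudo-inverse framing repairs it).

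The genuine gap sits at what you yourself call the content of the theorem: the zonal-spherical expansion you state is false, and it is the one step you do not actually prove. The correct identity is
\[
n^{l(\mu(\pi))}\;=\;\frac{2^{k}k!}{(2k)!}\sum_{\lambda\vdash k}\chi^{2\lambda}(e)\,C'_\lambda(n)\,\omega^\lambda(\pi),
\]
i.e.\ the prefactor is $\frac{2^{k}k!\,\chi^{2\lambda}(e)}{(2k)!}$, the reciprocal of the one you wrote; the two agree only at $k=1$, which is why small sanity checks miss the error. Test it against the paper's own $k=2$ data ($\omega^{(2)}\equiv 1$; $\omega^{(1^2)}=1$ on coset-type $(1^2)$ and $-\tfrac12$ on coset-type $(2)$; $C'_{(2)}(n)=n(n+2)$, $C'_{(1^2)}(n)=n(n-1)$; $\chi^{(4)}(e)=1$, $\chi^{(2^2)}(e)=2$): your right-hand side at $\pi=e$ is $3n(n+2)+\tfrac32 n(n-1)=\tfrac92 n(n+1)\neq n^2$, whereas the corrected coefficients give $\tfrac13 n(n+2)+\tfrac23 n(n-1)=n^2$ and value $n$ on coset-type $(2)$, as required. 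This is not cosmetic: inverting your expansion termwise via the orthogonality of the $\omega^\lambda$ produces coefficients whose $\lambda$-dependence is $\chi^{2\lambda}(e)^3/C'_\lambda(n)$ rather than $\chi^{2\lambda}(e)/C'_\lambda(n)$, so for $k\gs 2$ it does not give ${\rm Wg}^O$ under any overall constant --- the conclusion would contradict, e.g., the explicit $k=2$ values the paper computes (which do satisfy $WG=I$ for $G=(n^2-n)I_3+nJ_3$). The constants are fixed exactly by the bookkeeping you gloss over: passing from matrix products indexed by $M_{2k}$ to convolution on $S_{2k}$ costs a factor $|H_k|=2^kk!$, and the relation $\omega^\lambda\ast\omega^\mu=\delta_{\lambda\mu}\tfrac{(2k)!}{\chi^{2\lambda}(e)}\,\omega^\lambda$ carries its own normalisation; neither is written down, and the expansion itself is only asserted (``essentially Stanley's evaluation'') rather than derived. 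Fix the constant, prove the expansion by the $\omega^\mu$-pairing you sketch, and track the $2^kk!$ factor through the inversion; as written the proof does not close.
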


\noindent
Note that the statement of Theorem \ref{thm:CMS-R-lr-inv} above is slightly different from 
that in \cite{Collins-Matsumoto-Saad-2014}, the conclusion following from the proof
on \cite[p. 9]{Collins-Matsumoto-Saad-2014}, and being compatible 
with the invariances of ensembles such as 
$X\sim {\rm Unif}(K_{p, {\cal M}_n({\mathbb R})})$ under taking transpose.
\end{remark}

\section{Proof of Theorem \ref{thm:var-operator}}\label{sec:main-proof}

Let us start with the case where $E = {\cal M}_n({\mathbb F})$. By Proposition \ref{prop:var-reduction}
it suffices to show that
\begin{equation*} {\rm Var}_{N_\infty}\bigl(\|x\|_2^2\bigr) =  \frac{N_\infty\bigl(\|x\|_2^4\bigr)}{N_\infty(1)} - 
\left(\frac{N_\infty\bigl(\|x\|_2^2\bigr)}{N_\infty(1)}\right)^2 \simeq 1, \end{equation*}
where in this case
\begin{equation*} N_\infty(f) = \int\limits_{[-1,1]^n} f(x)\ \cdot \!\!\! \prod_{1\ls i<j\ls n}\big|\,x_i^2 - x_j^2\big|^\beta\,\cdot \!\!\!\prod_{1\ls i\ls n}|x_i|^{\beta-1}\,dx \end{equation*}
with $\beta= {\rm dim}_{{\mathbb R}}({\mathbb F})$. Since all the functions $f$ we need to consider
are symmetric and in addition their values only depend on what the absolute values of the coordinates of
their input are, we have
\begin{equation*} {\rm Var}_{N_\infty}\bigl(\|x\|_2^2\bigr) = \frac{\tilde{N}_\infty\bigl(\|x\|_2^4\bigr)}{\tilde{N}_\infty(1)} - 
\left(\frac{\tilde{N}_\infty\bigl(\|x\|_2^2\bigr)}{\tilde{N}_\infty(1)}\right)^2  \end{equation*}
where
\begin{equation*}
\tilde{N}_\infty(f):= \int\limits_{[0,1]^n} f(x)\ \cdot \!\!\! \prod_{1\ls i<j\ls n}\big|\,x_i^2 - x_j^2\big|^\beta\,\cdot \!\!\!\prod_{1\ls i\ls n}|x_i|^{\beta-1}\,dx = \frac{1}{2^n} N_\infty(f)
\end{equation*}
for all the functions considered. Furthermore, by symmetry again,
\begin{equation} \label{eqp:MnF-variance-exp}
{\rm Var}_{N_\infty}\bigl(\|x\|_2^2\bigr) = n\frac{\tilde{N}_\infty\bigl(x_1^4\bigr)}{\tilde{N}_\infty(1)}
+n(n-1)\frac{\tilde{N}_\infty\bigl(x_1^2x_2^2\bigr)}{\tilde{N}_\infty(1)} 
- n^2 \left(\frac{\tilde{N}_\infty\bigl(x_1^2\bigr)}{\tilde{N}_\infty(1)}\right)^2.
\end{equation}

Employing now the transformation $x = (x_1,x_2,\ldots, x_n) \in [0,1]^n \mapsto \bigl(\sqrt{x_1},\sqrt{x_2},\ldots,\sqrt{x_n}\bigr)$
which has Jacobian $x\in (0,1)^n \mapsto 2^{-n}\prod_ix_i^{-1/2}$,
we can obtain the following:
\begin{gather*}
\tilde{N}_\infty(1) = 2^{-n} \int\limits_{[0,1]^n} \prod_{1\ls i\ls n} x_i^{\frac{\beta}{2}-1}\cdot
\prod_{1\ls i<j\ls n}\big|\,x_i - x_j\big|^{\beta}dx = 2^{-n} \, I_0\Bigl(n; \frac{\beta}{2}, 1, \frac{\beta}{2}\Bigr),
\\
\tilde{N}_\infty(x_1^2) = 2^{-n}\int\limits_{[0,1]^n} x_1\prod_{1\ls i\ls n} x_i^{\frac{\beta}{2}-1}\cdot
\prod_{1\ls i<j\ls n}\big|\,x_i - x_j\big|^{\beta}dx = 2^{-n} \, I_1\Bigl(n; \frac{\beta}{2}, 1, \frac{\beta}{2}\Bigr),
\\
\tilde{N}_\infty\bigl(x_1^2x_2^2\bigr) =  2^{-n}\int\limits_{[0,1]^n} x_1x_2\prod_{1\ls i\ls n} x_i^{\frac{\beta}{2}-1}\cdot
\prod_{1\ls i<j\ls n}\big|\,x_i - x_j\big|^{\beta}dx = 2^{-n}\, \Bigl(I_1\Bigl(n; \frac{\beta}{2}, 1, \frac{\beta}{2}\Bigr)- I_{1, 1,0}\Bigl(n; \frac{\beta}{2}, 1, \frac{\beta}{2}\Bigr)\Bigr),
\\
\intertext{and finally}
\tilde{N}_\infty\bigl(x_1^4\bigr) =  2^{-n}\int\limits_{[0,1]^n} x_1^2\prod_{1\ls i\ls n} x_i^{\frac{\beta}{2}-1}\cdot
\prod_{1\ls i<j\ls n}\big|\,x_i - x_j\big|^{\beta}dx = 2^{-n}\, \Bigl(I_1\Bigl(n; \frac{\beta}{2}, 1, \frac{\beta}{2}\Bigr)- I_{1, 1,1}\Bigl(n; \frac{\beta}{2}, 1, \frac{\beta}{2}\Bigr)\Bigr)
\end{gather*}
(recall the notation in Subsection \ref{subsec:SAK-results}). Using the formulas in \eqref{eq1:Aomoto-integrals}
and \eqref{eq3:Aomoto-integrals}, we see that
\begin{gather*}
\frac{\tilde{N}_\infty\bigl(x_1^2\bigr)}{\tilde{N}_\infty(1)} = 
\frac{n\beta/2}{1+(2n-1)\beta/2} 
\\
\frac{\tilde{N}_\infty\bigl(x_1^2x_2^2\bigr)}{\tilde{N}_\infty(1)} =
\frac{n\beta/2}{1+(2n-1)\beta/2} - \frac{n\beta/2(1+(n-1)\beta/2)}{(1+(2n-1)\beta/2)(1+(n-1)\beta)} 
= \frac{n(n-1) \beta^2/4}{(1+(2n-1)\beta/2)(1+(n-1)\beta)},
\end{gather*}
\begin{align*}
 \frac{\tilde{N}_\infty\bigl(x_1^4\bigr)}{\tilde{N}_\infty(1)} &= 
\frac{n\beta/2}{1+(2n-1)\beta/2} \ - \  \frac{1+(n-1)\beta/2}{2+ (2n-1)\beta/2}\cdot \frac{n\beta/2(1+(n-1)\beta/2)}{(1+(2n-1)\beta/2)(1+(n-1)\beta)}
\\
& = \frac{n\beta/2(1/2 +3(n-1)\beta/4)}{(1+(2n-1)\beta/2)(1+(n-1)\beta)} + \frac{n\beta^2/8(1+(n-1)\beta/2)}{(2+ (2n-1)\beta/2)(1+(2n-1)\beta/2)(1+(n-1)\beta)}.
\end{align*}

Plugging these into \eqref{eqp:MnF-variance-exp}, we deduce that
\begin{align*} 
n\frac{\tilde{N}_\infty\bigl(x_1^4\bigr)}{\tilde{N}_\infty(1)}
\, +\, & n(n-1)\frac{\tilde{N}_\infty\bigl(x_1^2x_2^2\bigr)}{\tilde{N}_\infty(1)} \,-\,
n^2 \left(\frac{\tilde{N}_\infty\bigl(x_1^2\bigr)}{\tilde{N}_\infty(1)}\right)^2
\\ =\, & \frac{n^4\beta^2/4 - n^3\beta^2/8 +n^2\beta/2(1/2-\beta/4)}{(1+(2n-1)\beta/2)(1+(n-1)\beta)}
+  \frac{n^3\beta^3/16 + n^2\beta^2/8(1-\beta/2)}{(2+ (2n-1)\beta/2)(1+(2n-1)\beta/2)(1+(n-1)\beta)}
\\ &\,- \frac{n^4\beta^2/4}{(1+(2n-1)\beta/2)(1+(n-1)\beta)} \left(1 - \frac{\beta/2}{1+(2n-1)\beta/2}\right)
\\ = \,& \frac{n^2\beta/2(1/2-\beta/4)}{(1+(2n-1)\beta/2)(1+(n-1)\beta)}
+  \frac{n^3\beta^3/16 + n^2\beta^2/8(1-\beta/2)}{(2+ (2n-1)\beta/2)(1+(2n-1)\beta/2)(1+(n-1)\beta)}
\\ &\,+\frac{n^3\beta^2/8(\beta/2-1)}{(1+(2n-1)\beta/2)^2(1+(n-1)\beta)}
\\ = & \frac{n^3\beta^2/8 + n^2\beta/2\bigl((1/2-\beta/4)(2-\beta/2)+\beta/4(1-\beta/2)\bigr)}{(2+ (2n-1)\beta/2)(1+(2n-1)\beta/2)(1+(n-1)\beta)}  
+ \frac{n^3\beta^2/8(\beta/2-1)}{(2+(2n-1)\beta/2)(1+(2n-1)\beta/2)^2(1+(n-1)\beta)}
\\ =\, & \frac{1}{8\beta} + O\Bigl(\frac{1}{n}\Bigr).
\end{align*}
This agrees with the conclusion of \cite[Theorem 1]{Radke-V-2016} (see more specifically the end of Section 4 in \cite{Radke-V-2016}).

\bigskip

We now turn to the cases of the subspaces of ${\mathbb F}$-self-adjoint matrices. 
Recall that by Proposition \ref{prop:var-reduction} it suffices to show 
\begin{align} 
\nonumber {\rm Var}_{N_\infty}\bigl(\|x\|_2^2\bigr) &=  \frac{N_\infty\bigl(\|x\|_2^4\bigr)}{N_\infty(1)} - 
\left(\frac{N_\infty\bigl(\|x\|_2^2\bigr)}{N_\infty(1)}\right)^2 
\\ \label{eqp:AdjF-variance-exp}
&= n\frac{N_\infty\bigl(x_1^4\bigr)}{N_\infty(1)}
+n(n-1)\frac{N_\infty\bigl(x_1^2x_2^2\bigr)}{N_\infty(1)} 
- n^2 \left(\frac{N_\infty\bigl(x_1^2\bigr)}{N_\infty(1)}\right)^2 \simeq 1, \end{align}
where now
\begin{equation*} N_\infty(f) = \int\limits_{[-1,1]^n} f(x)\ \cdot \!\!\!\! \prod_{1\ls i<j\ls n}\big|\,x_i - x_j\big|^\beta\,dx \end{equation*}
with $\beta= {\rm dim}_{{\mathbb R}}({\mathbb F})$. For each of the functions $f$ in \eqref{eqp:AdjF-variance-exp}
we can write
\begin{align*}
N_\infty(f) &= \int\limits_{[-\frac{1}{2},\frac{1}{2}]^n} 2^n f(2x_1,\ldots,2x_n)\ \cdot \!\!\!\! \prod_{1\ls i<j\ls n}\big|\,2x_i - 2x_j\big|^\beta\,dx
\\
& = 2^{n+\beta n(n-1)/2 + s}\int\limits_{[-\frac{1}{2},\frac{1}{2}]^n} f(x_1,\ldots,x_n)\ \cdot \!\!\!\! \prod_{1\ls i<j\ls n}\big|\,x_i - x_j\big|^\beta\,dx
\\
&= 2^{n+\beta n(n-1)/2 + s}\int\limits_{[0,1]^n} f\Bigl(t_1-\frac{1}{2},\ldots,t_n-\frac{1}{2}\Bigr)\ \cdot \!\!\!\! \prod_{1\ls i<j\ls n}\big|\,t_i - t_j\big|^\beta\,d{\bm t},
\end{align*}
where $s$ is the degree of homogeneity of $f$. Thus, upon writing
\begin{equation*} 
J_\infty(g) = \int\limits_{[0,1]^n} g({\bm t}) \cdot \!\!\!\! \prod_{1\ls i<j\ls n}\big|\,t_i - t_j\big|^\beta\,d{\bm t},
\end{equation*}
we see that, to verify \eqref{eqp:AdjF-variance-exp}, we need to estimate
\begin{align*}
J_\infty\Bigl(\Bigl(t_1-\frac{1}{2}\Bigr)^2\Bigr) &=
J_\infty(t_1^2) - J_\infty(t_1) + \frac{1}{4}J_\infty(1) 
= \frac{1}{n} J_\infty(m_{(2)}) -\frac{1}{n} J_\infty(m_{(1)}) + \frac{1}{4}J_\infty(1),
\\
J_\infty\Bigl(\Bigl(t_1-\frac{1}{2}\Bigr)^2\Bigl(t_2-\frac{1}{2}\Bigr)^2\Bigr) &=
J_\infty(t_1^2t_2^2) - J_\infty(t_1^2t_2+t_1t_2^2) +\frac{1}{4}J_\infty(t_1^2+t_2^2) + J_\infty(t_1t_2) -\frac{1}{4}J_\infty(t_1+t_2)
+\frac{1}{16}J_\infty(1)
\\
& = \frac{2}{n(n-1)}J_\infty(m_{(2^2)}) - \frac{2}{n(n-1)}J_\infty(m_{(2,1)}) +\frac{1}{2n}J_\infty(m_{(2)})
\\ & \hspace{3.2cm} +\frac{2}{n(n-1)}J_\infty(m_{(1^2)}) - \frac{1}{2n}J_\infty(m_{(1)}) +\frac{1}{16}J_\infty(1),
\\
J_\infty\Bigl(\Bigl(t_1-\frac{1}{2}\Bigr)^4\Bigr) &=
J_\infty(t_1^4) - 2J_\infty(t_1^3) +\frac{3}{2}J_\infty(t_1^2) -\frac{1}{2}J_\infty(t_1)
+\frac{1}{16}J_\infty(1)
\\
& = \frac{1}{n} J_\infty(m_{(4)})- \frac{2}{n} J_\infty(m_{(3)}) + \frac{3}{2n} J_\infty(m_{(2)}) - \frac{1}{2n} J_\infty(m_{(1)})
+\frac{1}{16}J_\infty(1).
\end{align*}

We will do so by recalling 
the decompositions of the monomial symmetric functions 
in the bases of the Schur or the zonal or the quaternionic zonal polynomials
(see tables \eqref{conversion-table1} and \eqref{conversion-table2}),
and by using integration formula \eqref{eq:Kadell-integration-formula}.
Denote by $I_n^\kappa(\lambda)$ the integral
\begin{equation*} 
\int\limits_{[0,1]^n} P_\lambda^{1/\kappa}({\bm t})
\prod_{1\ls i<j\ls n}\big|\,t_i - t_j\big|^{2\kappa}\,d{\bm t} =
\int\limits_{[0,1]^n} s_\lambda^\kappa({\bm t})
\prod_{1\ls i<j\ls n}\big|\,t_i - t_j\big|^{2\kappa}\,d{\bm t}.
\end{equation*}
For simplicity and to make it easier to check the tedious computations, 
in what follows we treat the cases of ${\mathbb C}, {\mathbb R}$ and ${\mathbb H}$ separately
(note moreover that, even though the below computations could be done for more general values of $\beta$
(see Remark \ref{rem:beta-ensembles-estimates}),
and would still have an interpretation via a random matrix model (see \cite{Edelman-Sutton-2008}),
this interpretation would not correspond to the same type of 
variance problem as the one we are interested in here).

\begin{proposition}(Case of $\beta=2$, $\kappa=1$; Hermitian matrices) \label{prop:C-trace-estimates}
The following estimates are true:
\begin{gather*}
\frac{N_\infty\bigl(x_1^2\bigr)}{N_\infty(1)} = 2\frac{J_\infty\Bigl(\Bigl(t_1-\frac{1}{2}\Bigr)^2\Bigr)}{J_\infty(1)}
= \frac{1}{4} - \frac{1}{16n^2} + O\Bigl(\frac{1}{n^3}\Bigr),
\\
\frac{N_\infty\bigl(x_1^2x_2^2\bigr)}{N_\infty(1)} = 4\frac{J_\infty\Bigl(\Bigl(t_1-\frac{1}{2}\Bigr)^2\Bigl(t_2-\frac{1}{2}\Bigr)^2\Bigr)}{J_\infty(1)}
= \frac{1}{16} - \frac{1}{32n} - \frac{1}{32n^2} + O\Bigl(\frac{1}{n^3}\Bigr)
\\
\intertext{and}
\frac{N_\infty\bigl(x_1^4\bigr)}{N_\infty(1)} = 4\frac{J_\infty\Bigl(\Bigl(t_1-\frac{1}{2}\Bigr)^4\Bigr)}{J_\infty(1)}
= \frac{3}{32}  + O\Bigl(\frac{1}{n^2}\Bigr).
\end{gather*}
As a consequence,
\begin{equation*}
{\rm Var}_{N_\infty}\bigl(\|x\|_2^2\bigr) = n\frac{N_\infty\bigl(x_1^4\bigr)}{N_\infty(1)}
+n(n-1)\frac{N_\infty\bigl(x_1^2x_2^2\bigr)}{N_\infty(1)} 
- n^2 \left(\frac{N_\infty\bigl(x_1^2\bigr)}{N_\infty(1)}\right)^2 = \frac{1}{32} + O\Bigl(\frac{1}{n}\Bigr).
\end{equation*}
Moreover, 
\begin{equation*}
\\
\frac{N_\infty\bigl(x_1x_2\bigr)}{N_\infty(1)} = 2\frac{J_\infty\Bigl(\Bigl(t_1-\frac{1}{2}\Bigr) \Bigl(t_2-\frac{1}{2}\Bigr)\Bigr)}{J_\infty(1)}
= -\frac{1}{4n} - \frac{1}{8n^2} -\frac{1}{16n^3} + O\Bigl(\frac{1}{n^4}\Bigr)
\end{equation*}
(this is an estimate we will need in the following section).
\end{proposition}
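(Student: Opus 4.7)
The plan is to execute the strategy that has already been set up in the excerpt, specialized to the parameters $\beta = 2$, $\kappa = 1$. The shifts $x_i = 2t_i - 1$ have already reduced each $N_\infty$-ratio on the left to a linear combination of the $J_\infty$-integrals of monomial symmetric functions $m_\lambda$ with $|\lambda| \leq 4$, as displayed at the end of the preceding subsection. So the only real work left is to evaluate these $J_\infty(m_\lambda)$ and then assemble the answers.

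First, I would use the conversion tables \eqref{conversion-table1} and \eqref{conversion-table2} specialised to $\kappa = 1$ (whence $s_\lambda^1 = P_\lambda^1$ is the Schur polynomial indexed by $\lambda$) to rewrite each $m_\lambda$ appearing in those expressions as a finite linear combination of the $P_\mu^1$ with $|\mu| = |\lambda|$. Denote
\[
 I_n^1(\lambda) \ := \ \int_{[0,1]^n} P_\lambda^1({\bm t}) \prod_{i<j}(t_i-t_j)^2\, d{\bm t},
\]
so that $J_\infty(m_\lambda)$ becomes a short combination of the $I_n^1(\mu)$. Kadell's integration formula \eqref{eq:Kadell-integration-formula} with $u = w = \kappa = 1$ then yields a closed form for $I_n^1(\mu)/I_n^1((0))$ as a product of the Pochhammer factors $(n-i+1)_{\mu_i}/(2n-i+1)_{\mu_i}$ times $P_\mu^1(1^n) = f_n^1[\mu]/f_n^1[(0)]$, and for $\kappa = 1$ these $f_n^1$ factors reduce to products of integers of size $O(n)$, so the whole ratio is an explicit rational function of $n$ whose numerator and denominator are products of integer-shifted linear factors in $n$.

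Next, I would expand each $I_n^1(\mu)/I_n^1((0))$ as an asymptotic series in $1/n$, carrying enough terms to match the order of accuracy claimed: for the single-variable ratios through order $1/n^3$, for the product-type ratios through order $1/n^2$, and similarly for the fourth-moment ratio through the order stated. Summing these expansions with the coefficients coming from the $m_\lambda$-to-$J_\infty$ reduction in the excerpt yields the four displayed expressions for $N_\infty(x_1^2)/N_\infty(1)$, $N_\infty(x_1^2 x_2^2)/N_\infty(1)$, $N_\infty(x_1^4)/N_\infty(1)$ and $N_\infty(x_1 x_2)/N_\infty(1)$.

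Finally, I would plug the first three into \eqref{eqp:AdjF-variance-exp}. Here is where I expect the main difficulty to lie: the three contributions $n \cdot N_\infty(x_1^4)/N_\infty(1)$, $n(n-1) \cdot N_\infty(x_1^2 x_2^2)/N_\infty(1)$ and $-n^2 [N_\infty(x_1^2)/N_\infty(1)]^2$ are each of size $\Theta(n^2)$, and they must cancel to leave an $O(1)$ result with leading coefficient $1/32$. Consequently the subleading $1/n$- and $1/n^2$-corrections above are not optional; they feed into the $O(1)$ part of the variance, and a correct cancellation certifies both that the chosen expansion orders are sharp and that the constants have been kept track of correctly. The bookkeeping is the main obstacle, but it is entirely mechanical given the Kadell formula and the conversion tables. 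The estimate for $N_\infty(x_1 x_2)/N_\infty(1)$ is obtained in the same way from $J_\infty((t_1-\tfrac12)(t_2-\tfrac12)) = J_\infty(m_{(1^2)})\cdot 2/(n(n-1)) - J_\infty(m_{(1)})/n + \tfrac14 J_\infty(1)$, converted via $m_{(1^2)} = P_{(1^2)}^1$ and $m_{(1)} = P_{(1)}^1$ and expanded analogously.
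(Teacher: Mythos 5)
Your proposal is correct and follows essentially the same path as the paper's own proof: convert the monomial combinations that arise from the shift $x_i=2t_i-1$ into the Schur basis $\{P_\mu^1\}$ via the conversion tables, evaluate each $I_n^1(\mu)$ in closed form using Kadell's formula with $u=w=\kappa=1$ (where indeed the Pochhammer factors become $(n-i+1)_{\mu_i}/(2n-i+1)_{\mu_i}$), expand each ratio $I_n^1(\mu)/I_n^1((0))$ to the required order in $1/n$, and assemble. Your observation that the three $\Theta(n^2)$ pieces must cancel to leave an $O(1)$ variance --- forcing you to carry subleading corrections and thereby certifying that the expansion orders are sharp --- is exactly the bookkeeping constraint the paper is implicitly managing, so nothing is missing.
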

\begin{proof}
We begin with the simple observation that for all $\kappa$ we have 
\begin{equation*}
J_\infty(1) =  I_n^\kappa((0)) \qquad \hbox{and} \qquad J_\infty(m_{(1)}) = I_n^\kappa((1)) = \frac{n}{2}I_n^\kappa((0)).
\end{equation*}
Furthermore, when $\kappa = 1$,
\begin{align*}
J_\infty(m_{(1^2)}) = I_n^1((1^2)) = I_n^1((0))\frac{n(n-1)}{4}\frac{n-1}{2n-1}
&= I_n^1((0))\,\frac{n(n-1)}{2}\, \Bigl(\frac{1}{4} - \frac{1}{8n} - \frac{1}{16n^2} -\frac{1}{32n^3} + O\Bigl(\frac{1}{n^4}\Bigr)\Bigr)
\\ &= I_n^1((0))\,n\,\Bigl(\frac{n}{8} - \frac{3}{16} + \frac{1}{32n} + \frac{1}{64n^2} + O\Bigl(\frac{1}{n^3}\Bigr)\Bigr)
\\ \hbox{and} \qquad I_n^1((2)) = I_n^1((0))\frac{n(n+1)}{4}\frac{n+1}{2n+1} 
&= I_n^1((0))\,n\,\Bigl(\frac{n}{8} + \frac{3}{16} + \frac{1}{32n} - \frac{1}{64n^2} + O\Bigl(\frac{1}{n^3}\Bigr)\Bigr).
\end{align*}
Therefore,
\begin{equation*}
J_\infty(m_{(2)}) = I_n^1((2)) - I_n^1((1^2))
= I_n^1((0))\,n\,\Bigl(\frac{3}{8} - \frac{1}{32n^2} + O\Bigl(\frac{1}{n^3}\Bigr)\Bigr),
\end{equation*}
which also gives
\begin{equation} \label{eqp:C-second-moment}
J_\infty\Bigl(\Bigl(t_1-\frac{1}{2}\Bigr)^2\Bigr) 
=  \frac{1}{n} J_\infty(m_{(2)}) -\frac{1}{n} J_\infty(m_{(1)}) + \frac{1}{4}J_\infty(1)
= I_n^1((0))\,\Bigl(\frac{1}{8} - \frac{1}{32n^2} + O\Bigl(\frac{1}{n^3}\Bigr)\Bigr).
\end{equation}
Note also that 
\begin{equation} \label{eqp:C-linear-cross-term}
J_\infty\Bigl(\Bigl(t_1-\frac{1}{2}\Bigr)\Bigl(t_2-\frac{1}{2}\Bigr)\Bigr) 
= \frac{2}{n(n-1)} J_\infty(m_{(1^2)}) - \frac{1}{n} J_\infty(m_{(1)}) + \frac{1}{4}J_\infty(1)
= I_n^1((0))\,\Bigl(- \frac{1}{8n} - \frac{1}{16n^2} -\frac{1}{32n^3} + O\Bigl(\frac{1}{n^4}\Bigr)\Bigr).
\end{equation}

Next observe that
\begin{align*}
I_n^1((1^3)) = I_n^1((0))\frac{n(n-1)(n-2)}{24}\frac{n-2}{2n-1}
&= I_n^1((0))\,\frac{n(n-1)}{2}\,\Bigl(\frac{n}{24} - \frac{7}{48} + \frac{3}{32n} + \frac{3}{64n^2}  + O\Bigl(\frac{1}{n^3}\Bigr)\Bigr)
\\ 
& = I_n^1((0))\,n\,\Bigl(\frac{n^2}{48} - \frac{3n}{32}+ \frac{23}{192} - \frac{3}{128n}  + O\Bigl(\frac{1}{n^2}\Bigr)\Bigr),
\end{align*}
\begin{align*}
I_n^1((2,1)) = I_n^1((0))\frac{n(n-1)(n+1)}{6}\frac{n+1}{2n+1}\frac{n-1}{2n-1}
&= I_n^1((0))\,\frac{n(n-1)}{2}\,\Bigl(\frac{n}{12} + \frac{1}{12} - \frac{1}{16n} - \frac{1}{16n^2}  + O\Bigl(\frac{1}{n^3}\Bigr)\Bigr)
\\ 
& = I_n^1((0))\,n\,\Bigl(\frac{n^2}{24} - \frac{7}{96}  + O\Bigl(\frac{1}{n^2}\Bigr)\Bigr)
\end{align*}
and 
\begin{equation*}
I_n^1((3)) = I_n^1((0))\frac{(n+2)(n+1)n}{24}\frac{n+2}{2n+1}
= I_n^1((0))\,n\,\Bigl(\frac{n^2}{48} + \frac{3n}{32} + \frac{23}{192} + \frac{3}{128n} + O\Bigl(\frac{1}{n^2}\Bigr)\Bigr).
\end{equation*}
It follows that
\begin{equation*}
J_\infty(m_{(2,1)}) = I_n^1((2,1)) - 2I_n^1((1^3))
= I_n^1((0))\,\frac{n(n-1)}{2}\,\Bigl(\frac{3}{8} - \frac{1}{4n} - \frac{5}{32n^2}  + O\Bigl(\frac{1}{n^3}\Bigr)\Bigr)
\end{equation*}
and
\begin{equation*} 
J_\infty(m_{(3)}) = I_n^1((3)) - I_n^1((2,1)) + I_n^1((1^3)) 
= I_n^1((0))\,n\,\Bigl(\frac{5}{16} + O\Bigl(\frac{1}{n^2}\Bigr)\Bigr).
\end{equation*}

Moreover,
\begin{align*}
I_n^1((1^4)) = I_n^1((0))\frac{n(n-1)(n-2)(n-3)}{96}\frac{n-2}{2n-1}\frac{n-3}{2n-3}
&= I_n^1((0))\,\frac{n(n-1)}{2}\,\Bigl(\frac{n^2}{192} - \frac{n}{24} + \frac{27}{256} - \frac{9}{128n} - \frac{33}{1024n^2}  + O\Bigl(\frac{1}{n^3}\Bigr)\Bigr)
\\ 
& = I_n^1((0))\,n\,\Bigl(\frac{n^3}{384} - \frac{3n^2}{128}  + \frac{113n}{1536} - \frac{45}{1536} + \frac{39}{2048n}  + O\Bigl(\frac{1}{n^2}\Bigr)\Bigr),
\end{align*}
\begin{align*}
I_n^1((2,1^2)) = I_n^1((0))\frac{(n+1)n(n-1)(n-2)}{32}\frac{n+1}{2n+1}\frac{n-2}{2n-1}
&= I_n^1((0))\,\frac{n(n-1)}{2}\,\Bigl(\frac{n^2}{64} - \frac{n}{32} - \frac{11}{256} + \frac{7}{128n} + \frac{53}{1024n^2}  + O\Bigl(\frac{1}{n^3}\Bigr)\Bigr)
\\ 
& = I_n^1((0))\,n\,\Bigl(\frac{n^3}{128} - \frac{3n^2}{128} - \frac{3n}{512} + \frac{25}{512} - \frac{3}{2048n} + O\Bigl(\frac{1}{n^2}\Bigr)\Bigr),
\end{align*}
\begin{align*}
I_n^1((2^2)) = I_n^1((0))\frac{n(n-1)(n+1)n}{48}\frac{n+1}{2n+1}\frac{n-1}{2n-1}
&= I_n^1((0))\,\frac{n(n-1)}{2}\,\Bigl(\frac{n^2}{96} + \frac{n}{96} - \frac{1}{128} - \frac{1}{128n} - \frac{1}{512n^2}  + O\Bigl(\frac{1}{n^3}\Bigr)\Bigr)
\\ 
& = I_n^1((0))\,n\,\Bigl(\frac{n^3}{192} - \frac{7n}{768} + \frac{3}{1024n} + O\Bigl(\frac{1}{n^2}\Bigr)\Bigr),
\end{align*}
while
\begin{equation*}
I_n^1((3,1)) = I_n^1((0))\frac{(n+2)(n+1)n(n-1)}{32}\frac{n+2}{2n+1}\frac{n-1}{2n-1}
= I_n^1((0))\,n\,\Bigl(\frac{n^3}{128} + \frac{3n^2}{128} - \frac{3n}{512} - \frac{25}{512} - \frac{3}{2048n} + O\Bigl(\frac{1}{n^2}\Bigr)\Bigr)
\end{equation*}
and 
\begin{equation*}
I_n^1((4)) = I_n^1((0))\frac{(n+3)(n+2)(n+1)n}{96}\frac{n+3}{2n+3}\frac{n+2}{2n+1}
= I_n^1((0))\,n\,\Bigl(\frac{n^3}{384} + \frac{3n^2}{128} + \frac{113n}{1536} + \frac{45}{512} + \frac{39}{2048n} + O\Bigl(\frac{1}{n^2}\Bigr)\Bigr).
\end{equation*}
It follows that
\begin{equation*}
J_\infty(m_{(2^2)}) = I_n^1((2^2)) - I_n^1((2,1^2)) + I_n^1((1^4))
= I_n^1((0))\,\frac{n(n-1)}{2}\,\Bigl(\frac{9}{64} - \frac{17}{128n} - \frac{11}{128n^2}  + O\Bigl(\frac{1}{n^3}\Bigr)\Bigr)
\end{equation*}
and 
\begin{equation*}
J_\infty(m_{(4)})  = I_n^1((4)) - I_n^1((3,1)) + I_n^1((2,1^2)) + I_n^1((1^4))
= I_n^1((0))\,n\,\Bigl(\frac{35}{128} + O\Bigl(\frac{1}{n^2}\Bigr)\Bigr).
\end{equation*}

We conclude that
\begin{align} 
\nonumber
J_\infty\Bigl(\Bigl(t_1-\frac{1}{2}\Bigr)^2\Bigl(t_2-\frac{1}{2}\Bigr)^2\Bigr) 
&= \frac{2}{n(n-1)}J_\infty(m_{(2^2)}) - \frac{2}{n(n-1)}J_\infty(m_{(2,1)}) +\frac{1}{2n}J_\infty(m_{(2)})
\\ \nonumber & \hspace{3.2cm} +\frac{2}{n(n-1)}J_\infty(m_{(1^2)}) - \frac{1}{2n}J_\infty(m_{(1)}) +\frac{1}{16}J_\infty(1)
\\ \label{eqp:C-square-cross-term} &= I_n^1((0))\,\Bigl(\frac{1}{64} - \frac{1}{128n} - \frac{1}{128n^2} + O\Bigl(\frac{1}{n^3}\Bigr)\Bigr),
\end{align}
while
\begin{align} 
\nonumber
J_\infty\Bigl(\Bigl(t_1-\frac{1}{2}\Bigr)^4\Bigr) 
& =  \frac{1}{n} J_\infty(m_{(4)})- \frac{2}{n} J_\infty(m_{(3)}) + \frac{3}{2n} J_\infty(m_{(2)}) - \frac{1}{2n} J_\infty(m_{(1)})
+\frac{1}{16}J_\infty(1)
\\ \label{eqp:C-fourth-moment}
& = I_n^1((0))\,\Bigl(\frac{3}{128}  + O\Bigl(\frac{1}{n^2}\Bigr)\Bigr).
\end{align}
This completes the proof of Theorem \ref{thm:var-operator} when ${\mathbb F} = {\mathbb C}$.
\end{proof}

\begin{proposition}(Case of $\beta=1$, $\kappa=\frac{1}{2}$; ${\mathbb R}$-self-adjoint matrices)
The following estimates are true:
\begin{gather*}
\frac{N_\infty\bigl(x_1^2\bigr)}{N_\infty(1)} = 2\frac{J_\infty\Bigl(\Bigl(t_1-\frac{1}{2}\Bigr)^2\Bigr)}{J_\infty(1)}
= \frac{1}{4} - \frac{1}{8n} + \frac{1}{16n^2} + O\Bigl(\frac{1}{n^3}\Bigr),
\\
\frac{N_\infty\bigl(x_1^2x_2^2\bigr)}{N_\infty(1)} = 4\frac{J_\infty\Bigl(\Bigl(t_1-\frac{1}{2}\Bigr)^2\Bigl(t_2-\frac{1}{2}\Bigr)^2\Bigr)}{J_\infty(1)}
= \frac{1}{16} - \frac{3}{32n} + \frac{3}{32n^2} + O\Bigl(\frac{1}{n^3}\Bigr)
\\
\intertext{and}
\frac{N_\infty\bigl(x_1^4\bigr)}{N_\infty(1)} = 4\frac{J_\infty\Bigl(\Bigl(t_1-\frac{1}{2}\Bigr)^4\Bigr)}{J_\infty(1)}
= \frac{3}{32} - \frac{5}{64n} + O\Bigl(\frac{1}{n^2}\Bigr).
\end{gather*}
As a consequence,
\begin{equation*}
{\rm Var}_{N_\infty}\bigl(\|x\|_2^2\bigr) 
= \frac{1}{16} + O\Bigl(\frac{1}{n}\Bigr).
\end{equation*}
Moreover, 
\begin{equation*}
\\
\frac{N_\infty\bigl(x_1x_2\bigr)}{N_\infty(1)} = 2\frac{J_\infty\Bigl(\Bigl(t_1-\frac{1}{2}\Bigr) \Bigl(t_2-\frac{1}{2}\Bigr)\Bigr)}{J_\infty(1)}
= -\frac{1}{4n} + \frac{1}{8n^2} -\frac{1}{16n^3} + O\Bigl(\frac{1}{n^4}\Bigr).
\end{equation*}
\end{proposition}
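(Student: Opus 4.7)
The plan is to mirror precisely the strategy used in the proof of Proposition~\ref{prop:C-trace-estimates}, replacing the parameter choice $\kappa=1$ by $\kappa=\tfrac{1}{2}$ throughout. Since the preparatory reduction of
\[
J_\infty\!\Bigl(\bigl(t_1-\tfrac{1}{2}\bigr)^2\Bigr),\quad J_\infty\!\Bigl(\bigl(t_1-\tfrac{1}{2}\bigr)^2\bigl(t_2-\tfrac{1}{2}\bigr)^2\Bigr),\quad J_\infty\!\Bigl(\bigl(t_1-\tfrac{1}{2}\bigr)^4\Bigr)
\]
to linear combinations of $J_\infty(m_\lambda)$ for partitions $\lambda$ with $|\lambda|\le 4$ was carried out in the excerpt without reference to $\beta$, those decompositions carry over verbatim. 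The remaining task is therefore to evaluate, asymptotically in $n$, each $J_\infty(m_\lambda) = I_n^{1/2}(\text{Jack-expansion of } m_\lambda)$ arising, and to verify the claimed cancellations.

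First I would use the conversion tables \eqref{conversion-table1} and \eqref{conversion-table2} with $\kappa=\tfrac{1}{2}$ to write each $m_\lambda$ in the basis $\{P_\mu^{2}\}_{\mu}$; in particular the coefficients such as $\tfrac{2\kappa}{\kappa+1}$, $\tfrac{6\kappa}{2\kappa+1}$, $\tfrac{2\kappa(\kappa-1)}{(\kappa+1)(\kappa+2)}$ specialise to explicit rationals (for example $\tfrac{2\kappa}{\kappa+1}\big|_{\kappa=1/2}=\tfrac{2}{3}$ and $\tfrac{2\kappa(\kappa-1)}{(\kappa+1)(\kappa+2)}\big|_{\kappa=1/2}=-\tfrac{2}{15}$). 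Next I would apply Kadell's formula \eqref{eq:Kadell-integration-formula} with $u=w=1$, $\kappa=\tfrac{1}{2}$, which gives the closed form
\[
\frac{I_n^{1/2}(\lambda)}{I_n^{1/2}((0))} \;=\; s_\lambda^{1/2}(1^n)\,\prod_{i=1}^{n}\frac{\bigl(1+(n-i)/2\bigr)_{\lambda_i}}{\bigl(2+(2n-i-1)/2\bigr)_{\lambda_i}}
\]
for each partition $\lambda$ with $|\lambda|\le 4$. The values $s_\lambda^{1/2}(1^n)$ are read off from $f_n^{1/2}[\lambda]/f_n^{1/2}[(0)]$, and the Pochhammer products collapse to products of two or three linear factors in $n$, exactly as in the $\kappa=1$ computation.

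After obtaining each $I_n^{1/2}(\lambda)/I_n^{1/2}((0))$, I would expand in powers of $1/n$ to order $1/n^2$ (and to order $1/n^3$ for the second-moment and cross-term computations, to certify the cancellations claimed). Then I would assemble:
\[
\begin{aligned}
J_\infty(m_{(2)}) &= I_n^{1/2}((2)) - I_n^{1/2}((1^2)), \\
J_\infty(m_{(2,1)}) &= I_n^{1/2}((2,1)) - 2 I_n^{1/2}((1^3)),
\end{aligned}
\]
etc., following the explicit triangular conversions from the tables, and substitute into the formulas for $J_\infty\bigl((t_1-\tfrac{1}{2})^2\bigr)$, $J_\infty\bigl((t_1-\tfrac{1}{2})(t_2-\tfrac{1}{2})\bigr)$, $J_\infty\bigl((t_1-\tfrac{1}{2})^2(t_2-\tfrac{1}{2})^2\bigr)$, $J_\infty\bigl((t_1-\tfrac{1}{2})^4\bigr)$ that were derived in the excerpt. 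Dividing by $J_\infty(1)=I_n^{1/2}((0))$ yields the four claimed ratios. Finally, plugging these into \eqref{eqp:AdjF-variance-exp} gives
\[
n\cdot \frac{N_\infty(x_1^4)}{N_\infty(1)} + n(n-1)\cdot \frac{N_\infty(x_1^2 x_2^2)}{N_\infty(1)} - n^2\left(\frac{N_\infty(x_1^2)}{N_\infty(1)}\right)^{\!2} = \frac{1}{16} + O\!\Bigl(\frac{1}{n}\Bigr),
\]
where the leading $n^2$ and $n$ terms must cancel; verifying this cancellation is the main bookkeeping burden.

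The only real obstacle is computational discipline: the coefficients arising in the Jack expansion for $\kappa=\tfrac{1}{2}$ are slightly less favourable than for $\kappa=1$ (no vanishing simplifications like the one the $\kappa=1$ case enjoys for $m_{(4)}$'s coefficient of $P_{(2^2)}^{1}$), so a few extra rational-arithmetic steps are needed, and one must track expansions to one extra order of $1/n$ in $J_\infty(m_{(1^2)})$ and $J_\infty(m_{(2)})$ because the leading cancellations in $n\cdot \tfrac{N_\infty(x_1^4)}{N_\infty(1)} + n(n-1)\cdot \tfrac{N_\infty(x_1^2 x_2^2)}{N_\infty(1)} - n^2 \bigl(\tfrac{N_\infty(x_1^2)}{N_\infty(1)}\bigr)^2$ are of order $n^2$ and $n$ before the $O(1)$ residue emerges. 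Once the expansions are carried out to the appropriate order, both the variance estimate and the negative-correlation estimate $N_\infty(x_1 x_2)/N_\infty(1)$ follow by the same substitutions as in Proposition~\ref{prop:C-trace-estimates}.
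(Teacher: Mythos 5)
Your plan is exactly the paper's: reduce each $J_\infty(\cdot)$ to a linear combination of $J_\infty(m_\lambda)$ (the decomposition is $\kappa$-independent), convert each $m_\lambda$ to the Jack basis $\{P_\mu^{2}\}$ via tables \eqref{conversion-table1}--\eqref{conversion-table2} specialised at $\kappa=\tfrac12$, evaluate each $I_n^{1/2}(\mu)/I_n^{1/2}((0))$ via \eqref{eq:Kadell-integration-formula} with $u=w=1$, and expand to the required order in $1/n$. Your remark about tracking one extra order in $J_\infty(m_{(1^2)})$ and $J_\infty(m_{(2)})$ to see the $O(1)$ residue emerge is also accurate.

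However, the sample assembly formulas you wrote down are wrong and contradict the correct coefficient you computed a paragraph earlier. You displayed
\begin{align*}
J_\infty(m_{(2)}) &= I_n^{1/2}((2)) - I_n^{1/2}((1^2)),\\
J_\infty(m_{(2,1)}) &= I_n^{1/2}((2,1)) - 2\, I_n^{1/2}((1^3)),
\end{align*}
but these are the $\kappa=1$ coefficients. At $\kappa=\tfrac12$ table \eqref{conversion-table1} gives $\tfrac{2\kappa}{\kappa+1}=\tfrac23$ and $\tfrac{6\kappa}{2\kappa+1}=\tfrac32$, so the correct formulas are
\begin{align*}
J_\infty(m_{(2)}) &= I_n^{1/2}((2)) - \tfrac{2}{3}\, I_n^{1/2}((1^2)),\\
J_\infty(m_{(2,1)}) &= I_n^{1/2}((2,1)) - \tfrac{3}{2}\, I_n^{1/2}((1^3)),
\end{align*}
which is what the paper uses. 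You clearly knew this (you computed $\tfrac{2\kappa}{\kappa+1}\big|_{\kappa=1/2}=\tfrac23$ explicitly), so this is a slip rather than a misunderstanding, but with the coefficients as you wrote them the $1/n$ cancellations in \eqref{eqp:AdjF-variance-exp} would fail and you would not arrive at $\tfrac{1}{16}$. Once that is corrected, the proof is a direct transcription of the $\kappa=1$ argument with the new rational coefficients, and your observation that fewer vanishing simplifications occur (e.g.\ $m_{(4)}$ now picks up a $-\tfrac{2}{15}$ multiple of $P_{(2^2)}^{2}$) is correct.
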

\begin{proof}
When $\kappa = \frac{1}{2}$,
\begin{align*}
J_\infty(m_{(1^2)}) = I_n^{1/2}((1^2))  = I_n^{1/2}((0))\frac{n(n-1)}{4}\frac{n}{2n+1}
&= I_n^{1/2}((0))\,\frac{n(n-1)}{2}\, \Bigl(\frac{1}{4} - \frac{1}{8n} + \frac{1}{16n^2} -\frac{1}{32n^3} + O\Bigl(\frac{1}{n^4}\Bigr)\Bigr)
\\ &= I_n^{1/2}((0))\,n\,\Bigl(\frac{n}{8} - \frac{3}{16} + \frac{3}{32n} - \frac{3}{64n^2} + O\Bigl(\frac{1}{n^3}\Bigr)\Bigr)
\\ \hbox{and} \qquad I_n^{1/2}((2)) = I_n^{1/2}((0))\frac{n(n+2)}{12}\frac{n+3}{n+2} 
&= I_n^{1/2}((0))\,n\,\Bigl(\frac{n}{12} + \frac{1}{4} \Bigr).
\end{align*}
Therefore,
\begin{equation*}
J_\infty(m_{(2)}) = I_n^{1/2}((2)) - \frac{2}{3}I_n^{1/2}((1^2))
= I_n^{1/2}((0))\,n\,\Bigl(\frac{3}{8} - \frac{1}{16n} + \frac{1}{32n^2} + O\Bigl(\frac{1}{n^3}\Bigr)\Bigr),
\end{equation*}
which also gives
\begin{equation} \label{eqp:R-second-moment}
J_\infty\Bigl(\Bigl(t_1-\frac{1}{2}\Bigr)^2\Bigr) 
=  \frac{1}{n} J_\infty(m_{(2)}) -\frac{1}{n} J_\infty(m_{(1)}) + \frac{1}{4}J_\infty(1)
= I_n^{1/2}((0))\,\Bigl(\frac{1}{8} - \frac{1}{16n} + \frac{1}{32n^2} + O\Bigl(\frac{1}{n^3}\Bigr)\Bigr).
\end{equation}
Note also that 
\begin{equation} \label{eqp:R-linear-cross-term}
J_\infty\Bigl(\Bigl(t_1-\frac{1}{2}\Bigr)\Bigl(t_2-\frac{1}{2}\Bigr)\Bigr) 
= \frac{2}{n(n-1)} J_\infty(m_{(1^2)}) - \frac{1}{n} J_\infty(m_{(1)}) + \frac{1}{4}J_\infty(1)
= I_n^{1/2}((0))\,\Bigl(- \frac{1}{8n} + \frac{1}{16n^2} -\frac{1}{32n^3} + O\Bigl(\frac{1}{n^4}\Bigr)\Bigr).
\end{equation}

Next observe that
\begin{align*}
I_n^{1/2}((1^3))= I_n^{1/2}((0))\frac{n(n-1)(n-2)}{24}\frac{n-1}{2n+1}
&= I_n^{1/2}((0))\,\frac{n(n-1)}{2}\,\Bigl(\frac{n}{24} - \frac{7}{48} + \frac{5}{32n} - \frac{5}{64n^2}  + O\Bigl(\frac{1}{n^3}\Bigr)\Bigr)
\\ 
& = I_n^{1/2}((0))\,n\,\Bigl(\frac{n^2}{48} - \frac{3n}{32}+ \frac{29}{192} - \frac{15}{128n}  + O\Bigl(\frac{1}{n^2}\Bigr)\Bigr),
\end{align*}
\begin{align*}
I_n^{1/2}((2,1)) = I_n^{1/2}((0))\frac{n(n-1)(n+2)}{16}\frac{n+3}{n+2}\frac{n}{2n+1}
&= I_n^{1/2}((0))\,\frac{n(n-1)}{2}\,\Bigl(\frac{n}{16} + \frac{5}{32} - \frac{5}{64n} + \frac{5}{128n^2}  + O\Bigl(\frac{1}{n^3}\Bigr)\Bigr)
\\ 
& = I_n^{1/2}((0))\,n\,\Bigl(\frac{n^2}{32} + \frac{3n}{64} - \frac{15}{128} + \frac{15}{256n} + O\Bigl(\frac{1}{n^2}\Bigr)\Bigr)
\end{align*}
and 
\begin{equation*}
I_n^{1/2}((3)) = I_n^{1/2}((0))\frac{(n+4)(n+2)n}{120}\frac{n+5}{n+2}
= I_n^{1/2}((0))\,n\,\Bigl(\frac{n^2}{120} + \frac{3n}{40} + \frac{1}{6}\Bigr).
\end{equation*}
It follows that
\begin{equation*}
J_\infty(m_{(2,1)}) = I_n^{1/2}((2,1)) - \frac{3}{2}I_n^{1/2}((1^3))
= I_n^{1/2}((0))\,\frac{n(n-1)}{2}\,\Bigl(\frac{3}{8} - \frac{5}{16n} + \frac{5}{32n^2}  + O\Bigl(\frac{1}{n^3}\Bigr)\Bigr)
\end{equation*}
and
\begin{equation*} 
J_\infty(m_{(3)}) = I_n^{1/2}((3)) - \frac{3}{5}I_n^{1/2}((2,1)) + \frac{1}{2}I_n^{1/2}((1^3)) 
= I_n^{1/2}((0))\,n\,\Bigl(\frac{5}{16} - \frac{3}{32n} + O\Bigl(\frac{1}{n^2}\Bigr)\Bigr).
\end{equation*}

Moreover,
\begin{align*}
I_n^{1/2}((1^4)) = I_n^{1/2}((0))\frac{n(n-1)(n-2)(n-3)}{96}\frac{n-1}{2n+1}\frac{n-2}{2n-1}
&= I_n^{1/2}((0))\,\frac{n(n-1)}{2}\,\Bigl(\frac{n^2}{192} - \frac{n}{24} + \frac{31}{256} - \frac{5}{32n} + \frac{95}{1024n^2}  + O\Bigl(\frac{1}{n^3}\Bigr)\Bigr)
\\ 
& = I_n^{1/2}((0))\,n\,\Bigl(\frac{n^3}{384} - \frac{3n^2}{128}  + \frac{125n}{1536} - \frac{71}{512} + \frac{255}{2048n}  + O\Bigl(\frac{1}{n^2}\Bigr)\Bigr),
\end{align*}
\begin{align*}
I_n^{1/2}((2,1^2)) &= I_n^{1/2}((0))\frac{(n+2)n(n-1)(n-2)}{80}\frac{n+3}{n+2}\frac{n-1}{2n+1}
\\ &= I_n^{1/2}((0))\,\frac{n(n-1)}{2}\,\Bigl(\frac{n^2}{80} - \frac{n}{160} - \frac{27}{320} + \frac{15}{128n} - \frac{15}{256n^2}  + O\Bigl(\frac{1}{n^3}\Bigr)\Bigr)
\\ 
& = I_n^{1/2}((0))\,n\,\Bigl(\frac{n^3}{160} - \frac{3n^2}{320} - \frac{5n}{128} + \frac{129}{1280} - \frac{45}{512n} + O\Bigl(\frac{1}{n^2}\Bigr)\Bigr),
\end{align*}
\begin{align*}
I_n^{1/2}((2^2)) &= I_n^{1/2}((0))\frac{n(n-1)(n+2)(n+1)}{96} \frac{n+3}{n+2} \frac{n+2}{2n+3}\frac{n}{2n+1}
\\
&= I_n^{1/2}((0))\,\frac{n(n-1)}{2}\,\Bigl(\frac{n^2}{192} + \frac{n}{96} + \frac{3}{256} - \frac{1}{128n} + \frac{7}{1024n^2}  + O\Bigl(\frac{1}{n^3}\Bigr)\Bigr)
\\ 
& = I_n^{1/2}((0))\,n\,\Bigl(\frac{n^3}{384} + \frac{n^2}{128} - \frac{7n}{1536} - \frac{5}{512}
\frac{15}{2048n} + O\Bigl(\frac{1}{n^2}\Bigr)\Bigr),
\end{align*}
while
\begin{equation*}
I_n^{1/2}((3,1)) = I_n^{1/2}((0))\frac{(n+4)(n+2)n(n-1)}{144}\frac{n+5}{n+2}\frac{n}{2n+1}
= I_n^{1/2}((0))\,n\,\Bigl(\frac{n^3}{288} + \frac{5n^2}{192} + \frac{29n}{1152} - \frac{21}{256} + \frac{21}{512n} + O\Bigl(\frac{1}{n^2}\Bigr)\Bigr)
\end{equation*}
and 
\begin{equation*}
I_n^{1/2}((4)) = I_n^{1/2}((0))\frac{(n+6)(n+4)(n+2)n}{1680}\frac{n+7}{n+4}\frac{n+5}{n+2}
= I_n^{1/2}((0))\,n\,\Bigl(\frac{n^3}{1680} + \frac{3n^2}{280} + \frac{107n}{1680} + \frac{1}{8}\Bigr).
\end{equation*}
It follows that
\begin{equation*}
J_\infty(m_{(2^2)}) = I_n^{1/2}((2^2)) - \frac{2}{3}I_n^{1/2}((2,1^2)) + \frac{3}{5}I_n^{1/2}((1^4))
= I_n^{1/2}((0))\,\frac{n(n-1)}{2}\,\Bigl(\frac{9}{64} - \frac{23}{128n} + \frac{13}{128n^2}  + O\Bigl(\frac{1}{n^3}\Bigr)\Bigr)
\end{equation*}
and 
\begin{equation*}
J_\infty(m_{(4)})  = I_n^{1/2}((4)) - \frac{4}{7}I_n^{1/2}((3,1)) - \frac{2}{15}I_n^{1/2}((2^2)) + \frac{4}{9}I_n^{1/2}((2,1^2)) -\frac{2}{5}I_n^{1/2}((1^4))
= I_n^{1/2}((0))\,n\,\Bigl(\frac{35}{128} - \frac{29}{256n} + O\Bigl(\frac{1}{n^2}\Bigr)\Bigr).
\end{equation*}

We conclude that
\begin{align} 
\nonumber
J_\infty\Bigl(\Bigl(t_1-\frac{1}{2}\Bigr)^2\Bigl(t_2-\frac{1}{2}\Bigr)^2\Bigr) 
&= \frac{2}{n(n-1)}J_\infty(m_{(2^2)}) - \frac{2}{n(n-1)}J_\infty(m_{(2,1)}) +\frac{1}{2n}J_\infty(m_{(2)})
\\ \nonumber & \hspace{3.2cm} +\frac{2}{n(n-1)}J_\infty(m_{(1^2)}) - \frac{1}{2n}J_\infty(m_{(1)}) +\frac{1}{16}J_\infty(1)
\\ \label{eqp:R-cross-term} &= I_n^{1/2}((0))\,\Bigl(\frac{1}{64} - \frac{3}{128n} + \frac{3}{128n^2} + O\Bigl(\frac{1}{n^3}\Bigr)\Bigr),
\end{align}
while
\begin{align} 
\nonumber
J_\infty\Bigl(\Bigl(t_1-\frac{1}{2}\Bigr)^4\Bigr) 
& =  \frac{1}{n} J_\infty(m_{(4)})- \frac{2}{n} J_\infty(m_{(3)}) + \frac{3}{2n} J_\infty(m_{(2)}) - \frac{1}{2n} J_\infty(m_{(1)})
+\frac{1}{16}J_\infty(1)
\\ \label{eqp:R-fourth-moment}
& = I_n^{1/2}((0))\,\Bigl(\frac{3}{128} - \frac{5}{256n} + O\Bigl(\frac{1}{n^2}\Bigr)\Bigr).
\end{align}
This completes the proof of Theorem \ref{thm:var-operator} when ${\mathbb F} = {\mathbb R}$.
\end{proof}

\begin{proposition}(Case of $\beta=4$, $\kappa=2$; ${\mathbb H}$-self-adjoint matrices)
The following estimates are true:
\begin{gather*}
\frac{N_\infty\bigl(x_1^2\bigr)}{N_\infty(1)} = 2\frac{J_\infty\Bigl(\Bigl(t_1-\frac{1}{2}\Bigr)^2\Bigr)}{J_\infty(1)}
= \frac{1}{4} + \frac{1}{16n} + \frac{1}{64n^2} + O\Bigl(\frac{1}{n^3}\Bigr),
\\
\frac{N_\infty\bigl(x_1^2x_2^2\bigr)}{N_\infty(1)} = 4\frac{J_\infty\Bigl(\Bigl(t_1-\frac{1}{2}\Bigr)^2\Bigl(t_2-\frac{1}{2}\Bigr)^2\Bigr)}{J_\infty(1)}
= \frac{1}{16}  - \frac{3}{256n^2} + O\Bigl(\frac{1}{n^3}\Bigr)
\\
\intertext{and}
\frac{N_\infty\bigl(x_1^4\bigr)}{N_\infty(1)} = 4\frac{J_\infty\Bigl(\Bigl(t_1-\frac{1}{2}\Bigr)^4\Bigr)}{J_\infty(1)}
= \frac{3}{32} + \frac{5}{128n} + O\Bigl(\frac{1}{n^2}\Bigr).
\end{gather*}
As a consequence,
\begin{equation*}
{\rm Var}_{N_\infty}\bigl(\|x\|_2^2\bigr) 
= \frac{1}{64} + O\Bigl(\frac{1}{n}\Bigr).
\end{equation*}
\end{proposition}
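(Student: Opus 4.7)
The plan is to follow verbatim the template of the two preceding propositions, specialising the parameters throughout to $\kappa=2$ (equivalently $\beta=4$) with $u=w=1$. First I would tabulate $I_n^2(\lambda)/I_n^2((0))$ for each partition appearing in tables \eqref{conversion-table1} and \eqref{conversion-table2}, namely for
\[
\lambda\in\{(1),(1^2),(2),(1^3),(2,1),(3),(1^4),(2,1^2),(2^2),(3,1),(4)\}.
\]
These follow immediately from Kadell's formula \eqref{eq:Kadell-integration-formula}: each ratio is a finite product of linear factors in $n$, which can be expanded in powers of $1/n$ (to order $1/n^3$ when $|\lambda|\ls 3$, and to order $1/n^2$ when $|\lambda|=4$).

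Second, I would substitute $\kappa=2$ into tables \eqref{conversion-table1} and \eqref{conversion-table2} to write each monomial symmetric polynomial $m_\mu$ (of degree at most $4$) as a linear combination of the Jack polynomials $P_\lambda^{1/2}$. Applying \eqref{eq:Kadell-integration-formula} term by term yields the corresponding expansions of $J_\infty(m_\mu)/I_n^2((0))$. Plugging these into the expressions for $J_\infty\bigl((t_1-\tfrac{1}{2})^2\bigr)$, $J_\infty\bigl((t_1-\tfrac{1}{2})^2(t_2-\tfrac{1}{2})^2\bigr)$ and $J_\infty\bigl((t_1-\tfrac{1}{2})^4\bigr)$, which are already expanded in the $J_\infty(m_\mu)$ just before the statement of Proposition \ref{prop:C-trace-estimates}, produces the three ratios claimed in the proposition. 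Finally I would substitute these three ratios into \eqref{eqp:AdjF-variance-exp} to obtain ${\rm Var}_{N_\infty}(\|x\|_2^2)$. As in the previous two cases, the leading $\Theta(n^2)$ and $\Theta(n)$ contributions coming from $n^2(N_\infty(x_1^2)/N_\infty(1))^2$ and $n(n-1)N_\infty(x_1^2x_2^2)/N_\infty(1)$ must cancel, leaving only the constant $\tfrac{1}{64}$. This matches the pattern $\tfrac{1}{16\beta}$ visible in the Hermitian ($\beta=2$, constant $\tfrac{1}{32}$) and real symmetric ($\beta=1$, constant $\tfrac{1}{16}$) cases, and in turn is consistent with the value $\tfrac{1}{8\beta}$ obtained earlier for $E=\mathcal{M}_n(\mathbb{F})$.

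The main obstacle is purely computational bookkeeping rather than anything conceptual: the cancellations in the last step are large, so each intermediate ratio must be expanded to the stated order without losing track of contributions, and the several $\kappa$-dependent denominators in \eqref{conversion-table2}, which at $\kappa=2$ simplify using small integer factors like $3,5,7$, need to be combined carefully. A natural sanity check is to compare the $1/n$-coefficient of $(N_\infty(x_1^2)/N_\infty(1))^2$ with that of $N_\infty(x_1^2x_2^2)/N_\infty(1)$ and verify that the $\Theta(n)$ terms in the variance expression genuinely cancel, and similarly for the $\Theta(n^2)$ terms, before reading off the surviving constant $\tfrac{1}{64}$.
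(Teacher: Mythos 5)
Your plan reproduces the paper's proof exactly: compute the ratios $I_n^2(\lambda)/I_n^2((0))$ from Kadell's formula, convert the needed monomial symmetric functions to the Jack basis at $\kappa=2$ via tables \eqref{conversion-table1}--\eqref{conversion-table2}, assemble $J_\infty\bigl((t_1-\tfrac12)^2\bigr)$, $J_\infty\bigl((t_1-\tfrac12)^2(t_2-\tfrac12)^2\bigr)$, $J_\infty\bigl((t_1-\tfrac12)^4\bigr)$, and substitute into \eqref{eqp:AdjF-variance-exp}. This is precisely how the paper handles the $\beta=4$ case, and your sanity checks (cancellation of the $\Theta(n^2)$ and $\Theta(n)$ terms, and the $1/(16\beta)$ pattern across the three propositions) are consistent with what the paper's explicit arithmetic delivers.
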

\begin{proof}
When $\kappa = 2$,
\begin{align*}
J_\infty(m_{(1^2)}) = I_n^{2}((1^2)) =
I_n^2((0))\frac{n(n-1)}{16}\frac{2n-3}{n-1}
&= I_n^2((0))\,\frac{n(n-1)}{2}\, \Bigl(\frac{1}{4} - \frac{1}{8n} - \frac{1}{8n^2} -\frac{1}{8n^3} + O\Bigl(\frac{1}{n^4}\Bigr)\Bigr)
\\ &= I_n^2((0))\,n\,\Bigl(\frac{n}{8} - \frac{3}{16}\Bigr)
\\ \hbox{and} \qquad I_n^{2}((2))  = I_n^2((0))\frac{n(2n+1)}{6}\frac{2n}{4n-1} 
&= I_n^2((0))\,n\,\Bigl(\frac{n}{6} + \frac{1}{8} + \frac{1}{32n} + \frac{1}{128n^2} + O\Bigl(\frac{1}{n^3}\Bigr)\Bigr). \end{align*}
Therefore,
\begin{equation*}
J_\infty(m_{(2)}) = I_n^{2}((2)) - \frac{4}{3}I_n^{2}((1^2))
= I_n^2((0))\,n\,\Bigl(\frac{3}{8} + \frac{1}{32n} + \frac{1}{128n^2} + O\Bigl(\frac{1}{n^3}\Bigr)\Bigr),
\end{equation*}
which also gives
\begin{equation} \label{eqp:H-second-moment}
J_\infty\Bigl(\Bigl(t_1-\frac{1}{2}\Bigr)^2\Bigr) 
=  \frac{1}{n} J_\infty(m_{(2)}) -\frac{1}{n} J_\infty(m_{(1)}) + \frac{1}{4}J_\infty(1)
= I_n^2((0))\,\Bigl(\frac{1}{8} + \frac{1}{32n} + \frac{1}{128n^2} + O\Bigl(\frac{1}{n^3}\Bigr)\Bigr).
\end{equation}

Next observe that
\begin{align*}
I_n^{2}((1^3)) = I_n^2((0))\frac{n(n-1)(n-2)}{96}\frac{2n-5}{n-1}
&= I_n^2((0))\,\frac{n(n-1)}{2}\,\Bigl(\frac{n}{24} - \frac{7}{48} + \frac{1}{16n} + \frac{1}{16n^2}  + O\Bigl(\frac{1}{n^3}\Bigr)\Bigr)
\\ 
& = I_n^2((0))\,n\,\Bigl(\frac{n^2}{48} - \frac{3n}{32}+ \frac{5}{48} + O\Bigl(\frac{1}{n^2}\Bigr)\Bigr),
\end{align*}
\begin{align*}
I_n^2((2,1)) = I_n^2((0))\frac{n(n-1)(2n+1)}{20}\frac{n}{4n-1}\frac{2n-3}{n-1}
&= I_n^2((0))\,\frac{n(n-1)}{2}\,\Bigl(\frac{n}{10} + \frac{1}{40} - \frac{11}{160n} - \frac{59}{640n^2}  + O\Bigl(\frac{1}{n^3}\Bigr)\Bigr)
\\ 
& = I_n^2((0))\,n\,\Bigl(\frac{n^2}{20} - \frac{3n}{80} - \frac{3}{64} - \frac{3}{256n} + O\Bigl(\frac{1}{n^2}\Bigr)\Bigr)
\end{align*}
and 
\begin{equation*}
I_n^2((3)) = I_n^2((0))\frac{(n+1)(2n+1)n}{24}\frac{2n+1}{4n-1}
= I_n^2((0))\,n\,\Bigl(\frac{n^2}{24} + \frac{3n}{32} + \frac{29}{384} + \frac{15}{512n} + O\Bigl(\frac{1}{n^2}\Bigr)\Bigr).
\end{equation*}
It follows that
\begin{equation*}
J_\infty(m_{(2,1)}) = I_n^2((2,1)) - \frac{12}{5}I_n^2((1^3))
= I_n^2((0))\,\frac{n(n-1)}{2}\,\Bigl(\frac{3}{8} - \frac{7}{32n} - \frac{31}{128n^2}  + O\Bigl(\frac{1}{n^3}\Bigr)\Bigr)
\end{equation*}
and
\begin{equation*} 
J_\infty(m_{(3)}) = I_n^2((3)) - \frac{3}{2}I_n^2((2,1)) + \frac{8}{5}I_n^2((1^3)) 
= I_n^2((0))\,n\,\Bigl(\frac{5}{16} + \frac{3}{64n} + O\Bigl(\frac{1}{n^2}\Bigr)\Bigr).
\end{equation*}

Moreover,
\begin{align*}
I_n^2((1^4)) = I_n^2((0))\frac{n(n-1)(n-2)(n-3)}{1536}\frac{2n-5}{n-1}\frac{2n-7}{n-2}
&= I_n^2((0))\,\frac{n(n-1)}{2}\,\Bigl(\frac{n^2}{192} - \frac{n}{24} + \frac{25}{256} - \frac{5}{128n} - \frac{5}{128n^2}  + O\Bigl(\frac{1}{n^3}\Bigr)\Bigr)
\\ 
& = I_n^2((0))\,n\,\Bigl(\frac{n^3}{384} - \frac{3n^2}{128}  + \frac{107n}{1536} - \frac{35}{1536} + O\Bigl(\frac{1}{n^2}\Bigr)\Bigr),
\end{align*}
\begin{align*}
I_n^2((2,1^2)) &= I_n^2((0))\frac{(2n+1)n(n-1)(n-2)}{112}\frac{n}{4n-1}\frac{2n-5}{n-1}
\\
&= I_n^2((0))\,\frac{n(n-1)}{2}\,\Bigl(\frac{n^2}{56} - \frac{11n}{224} - \frac{15}{896} + \frac{129}{3584n} + \frac{705}{14336n^2}  + O\Bigl(\frac{1}{n^3}\Bigr)\Bigr)
\\ 
& = I_n^2((0))\,n\,\Bigl(\frac{n^3}{112} - \frac{15n^2}{448} + \frac{29n}{1792} + \frac{27}{1024} + \frac{27}{4096n} + O\Bigl(\frac{1}{n^2}\Bigr)\Bigr),
\end{align*}
\begin{align*}
I_n^2((2^2)) &= I_n^2((0))\frac{n(n-1)(2n+1)(2n-1)}{60}\frac{2n}{4n-1}\frac{2n-2}{4n-3}\frac{2n-3}{4n-4}
\\
&= I_n^2((0))\,\frac{n(n-1)}{2}\,\Bigl(\frac{n^2}{60} - \frac{n}{120} - \frac{1}{64} - \frac{1}{128n} - \frac{5}{1024n^2}  + O\Bigl(\frac{1}{n^3}\Bigr)\Bigr)
\\ 
& = I_n^2((0))\,n\,\Bigl(\frac{n^3}{120} - \frac{n^2}{80} - \frac{7n}{1920} + \frac{1}{256} + \frac{3}{2048n} + O\Bigl(\frac{1}{n^2}\Bigr)\Bigr),
\end{align*}
while
\begin{equation*}
I_n^2((3,1)) = I_n^2((0))\frac{(2n+2)(2n+1)n(n-1)}{72}\frac{2n+1}{4n-1}\frac{2n-3}{4n-4}
= I_n^2((0))\,n\,\Bigl(\frac{n^3}{72} + \frac{n^2}{96} - \frac{25n}{1152} - \frac{43}{1536} - \frac{25}{2048n} + O\Bigl(\frac{1}{n^2}\Bigr)\Bigr)
\end{equation*}
and 
\begin{equation*}
I_n^2((4)) = I_n^2((0))\frac{(2n+3)(2n+2)(2n+1)n}{240}\frac{2n+2}{4n+1}\frac{2n+1}{4n-1}
= I_n^2((0))\,n\,\Bigl(\frac{n^3}{120} + \frac{3n^2}{80} + \frac{25n}{384} + \frac{71}{1280} + \frac{51}{2048n} + O\Bigl(\frac{1}{n^2}\Bigr)\Bigr).
\end{equation*}
It follows that
\begin{equation*}
J_\infty(m_{(2^2)}) = I_n^2((2^2)) - \frac{4}{3}I_n^2((2,1^2)) + \frac{48}{35}I_n^2((1^4))
= I_n^2((0))\,\frac{n(n-1)}{2}\,\Bigl(\frac{9}{64} - \frac{7}{64n} - \frac{127}{1024n^2}  + O\Bigl(\frac{1}{n^3}\Bigr)\Bigr)
\end{equation*}
and 
\begin{equation*}
J_\infty(m_{(4)})  = I_n^2((4)) - \frac{8}{5}I_n^2((3,1)) + \frac{1}{3}I_n^2((2^2)) + \frac{16}{9}I_n^2((2,1^2)) - \frac{64}{35}I_n^2((1^4))
= I_n^2((0))\,n\,\Bigl(\frac{35}{128} + \frac{29}{512n} + O\Bigl(\frac{1}{n^2}\Bigr)\Bigr).
\end{equation*}

We conclude that
\begin{align} 
\nonumber
J_\infty\Bigl(\Bigl(t_1-\frac{1}{2}\Bigr)^2\Bigl(t_2-\frac{1}{2}\Bigr)^2\Bigr) 
&= \frac{2}{n(n-1)}J_\infty(m_{(2^2)}) - \frac{2}{n(n-1)}J_\infty(m_{(2,1)}) +\frac{1}{2n}J_\infty(m_{(2)})
\\ \nonumber & \hspace{3.2cm} +\frac{2}{n(n-1)}J_\infty(m_{(1^2)}) - \frac{1}{2n}J_\infty(m_{(1)}) +\frac{1}{16}J_\infty(1)
\\ \label{eqp:H-cross-term} &= I_n^2((0))\,\Bigl(\frac{1}{64} - \frac{3}{1024n^2} + O\Bigl(\frac{1}{n^3}\Bigr)\Bigr),
\end{align}
while
\begin{align} 
\nonumber
J_\infty\Bigl(\Bigl(t_1-\frac{1}{2}\Bigr)^4\Bigr) 
& =  \frac{1}{n} J_\infty(m_{(4)})- \frac{2}{n} J_\infty(m_{(3)}) + \frac{3}{2n} J_\infty(m_{(2)}) - \frac{1}{2n} J_\infty(m_{(1)})
+\frac{1}{16}J_\infty(1)
\\ \label{eqp:H-fourth-moment}
& = I_n^2((0))\,\Bigl(\frac{3}{128} + \frac{5}{512n}  + O\Bigl(\frac{1}{n^2}\Bigr)\Bigr).
\end{align}
This completes the proof of Theorem \ref{thm:var-operator} in all cases.
\end{proof}

\begin{remark}\label{rem:beta-ensembles-estimates}
We can sum up the above computations, which can be made for all large enough $\beta$,
as follows: as long as $\beta = 2\kappa$ is bounded away from zero, 
i.e. $\beta \gs \beta_0$ for some fixed $\beta_0 > 0$,
we have
\begin{align*} 
& \frac{1}{I_n^{\beta/2}((0))} \cdot \left(\int_{[-\frac{1}{2},\frac{1}{2}]^n} m_{(4)}({\bm x})\ |\Delta_n({\bm x})|^\beta\,d{\bm x} 
\ + \ 2 \int_{[-\frac{1}{2},\frac{1}{2}]^n} m_{(2^2)}({\bm x})\,|\Delta_n({\bm x})|^\beta\,d{\bm x} \right)
\\
& \hspace{8cm} 
- \left(\frac{1}{I_n^{\beta/2}((0))}\int_{[-\frac{1}{2},\frac{1}{2}]^n} m_{(2)}({\bm x})\ |\Delta_n({\bm x})|^\beta\,d{\bm x} \right)^2
\\
= &\ \left(\frac{3}{128}n + \frac{5(\beta-2)}{256\beta}\right) +  \left(\frac{1}{64}n^2 - \frac{\beta+4}{128\beta}n + \frac{\beta^2-9\beta+14}{128\beta^2}\right)
- \left(\frac{1}{64}n^2 + \frac{\beta-2}{64\beta}n + \frac{7\beta^2 - 32\beta + 28}{256\beta^2}\right) + O_{\beta_0}\left(\frac{1}{n}\right) 
\\
= &\ \frac{1}{64\beta} + O_{\beta_0}\left(\frac{1}{n}\right). 
\end{align*}
\end{remark}

\section{Almost isotropicity of $B_E$ in the subspaces of self-adjoint matrices}\label{sec:almost-isotropic}

Here we establish Theorem \ref{thm:almost-isotropic}. 

\medskip

\noindent\emph{Proof in the case where $E$ is the subspace of Hermitian matrices.}
The orthonormal basis that we fix is the following: 
\begin{equation*} \{J^{kk}: 1\ls k\ls n\}\bigcup \bigl\{\tfrac{1}{\sqrt{2}}\bigl(J^{kl} + J^{lk}\bigr): k < l\bigr\}
\bigcup \bigl\{\tfrac{i}{\sqrt{2}}\bigl(J^{kl} - J^{lk}\bigr): k < l\bigr\} \end{equation*}
where $J^{kl}$ 
is the single-entry matrix whose only non-zero entry is the $(k,l)$-th one and is equal to 1.
According to Theorem \ref{thm:CMS-C-conj-inv}, we have
\begin{equation*}
\frac{1}{{\rm vol}(B_E)}\int_{B_E} T_{k_1l_1}T_{k_2l_2}\,dT = 0
\end{equation*}
whenever $\{k_1,k_2\} \neq \{l_1,l_2\}$.
This immediately shows that any pair of marginals of the distribution which correspond
to one diagonal and (either the real or the imaginary part of) one non-diagonal entry 
is linearly uncorrelated. Similarly, if they correspond to two non-diagonal entries $(k_1,l_1)$,
$(k_2,l_2)$ with $(k_2,l_2)\notin \{(k_1,l_1), (l_1,k_1)\}$ we can observe the following:
\begin{align*}
0 & = \frac{1}{{\rm vol}(B_E)}\int_{B_E} T_{k_1l_1}T_{k_2l_2}\,dT
\\
& = \frac{1}{{\rm vol}(B_E)}\int_{B_E} \bigl(\Re(T_{k_1l_1})\Re(T_{k_2l_2}) - \Im(T_{k_1l_1})\Im(T_{k_2l_2})\bigr)\,dT
\\
& \qquad 
+ \frac{i}{{\rm vol}(B_E)}\int_{B_E} \bigl(\Re(T_{k_1l_1})\Im(T_{k_2l_2}) + \Im(T_{k_1l_1})\Re(T_{k_2l_2})\bigr)\,dT,
\end{align*}
while 
\begin{align*}
0 & = \frac{1}{{\rm vol}(B_E)}\int_{B_E} T_{k_1l_1}T_{l_2k_2}\,dT
\\
& = \frac{1}{{\rm vol}(B_E)}\int_{B_E} \bigl(\Re(T_{k_1l_1})\Re(T_{l_2k_2}) - \Im(T_{k_1l_1})\Im(T_{l_2k_2})\bigr)\,dT
\\
& \qquad 
+ \frac{i}{{\rm vol}(B_E)}\int_{B_E} \bigl(\Re(T_{k_1l_1})\Im(T_{l_2k_2}) + \Im(T_{k_1l_1})\Re(T_{l_2k_2})\bigr)\,dT
\\
& = \frac{1}{{\rm vol}(B_E)}\int_{B_E} \bigl(\Re(T_{k_1l_1})\Re(T_{k_2l_2}) + \Im(T_{k_1l_1})\Im(T_{k_1l_2})\bigr)\,dT
\\
& \qquad + \frac{i}{{\rm vol}(B_E)}\int_{B_E} \bigl(-\Re(T_{k_1l_1})\Im(T_{k_2l_2}) + \Im(T_{k_1l_1})\Re(T_{k_2l_2})\bigr)\,dT.
\end{align*}
Combined, these show that all the above integrals are equal to 0.

Let us examine the remaining cases, where the marginals correspond to two different diagonal entries
$(k,k), (l,l)$, or to the real and to the imaginary part of the same non-diagonal entry $(k,l)$, $k\neq l$.

In the latter case, we can write
\begin{align}\label{eqp1:C-almost-isotropicity}
0 & = \frac{1}{{\rm vol}(B_E)}\int_{B_E} T_{kl}T_{kl}\,dT
\\ \nonumber
& = \frac{1}{{\rm vol}(B_E)}\int_{B_E} \bigl(\Re(T_{kl})^2 - \Im(T_{kl})^2\bigr)\,dT
+ \frac{2i}{{\rm vol}(B_E)}\int_{B_E} \Re(T_{kl})\Im(T_{kl})\,dT,
\end{align}
which shows that the marginals are uncorrelated.

In the former case, we have from Theorem \ref{thm:CMS-C-conj-inv} 
and from Proposition \ref{prop:C-trace-estimates} that
\begin{align*}
\frac{1}{{\rm vol}(B_E)}\int_{B_E} T_{kk}T_{ll}\,dT 
& = {\rm Wg}^U(e;n)\ \frac{1}{{\rm vol}(B_E)}\int_{B_E}{\rm Tr}_e(T) \,dT 
+ {\rm Wg}^U((12);n)\frac{1}{{\rm vol}(B_E)}\int_{B_E}{\rm Tr}_{(12)}(T) \,dT
\\
& = \frac{1}{(n-1)(n+1)}\,\frac{1}{{\rm vol}(B_E)}\int_{B_E}\bigl({\rm Tr}(T)\bigr)^2 \,dT
- \frac{1}{n(n-1)(n+1)}\,\frac{1}{{\rm vol}(B_E)}\int_{B_E}{\rm Tr}(T^2) \,dT
\\
& = \frac{1}{(n-1)(n+1)}\,\left(n\frac{N_\infty\bigl(x_1^2\bigr)}{N_\infty(1)} + n(n-1)\frac{N_\infty\bigl(x_1x_2\bigr)}{N_\infty(1)} \right) - \frac{1}{(n-1)(n+1)}\,\frac{N_\infty\bigl(x_1^2\bigr)}{N_\infty(1)} 
\\
& = \frac{1}{n+1} \left(\frac{N_\infty\bigl(x_1^2\bigr)}{N_\infty(1)} + n\frac{N_\infty\bigl(x_1x_2\bigr)}{N_\infty(1)} \right) 
\\
& = - \frac{1}{8n(n+1)} + O\Bigl(\frac{1}{n^3}\Bigr).
\end{align*}

\bigskip

Moreover, turning to second moments of the marginals, we see that
\begin{align*}
\frac{1}{{\rm vol}(B_E)}\int_{B_E} T_{kk}^2\,dT 
& = {\rm Wg}^U(e;n)\ \frac{1}{{\rm vol}(B_E)}\int_{B_E}{\rm Tr}_e(T) \,dT 
+ {\rm Wg}^U((12);n)\frac{1}{{\rm vol}(B_E)}\int_{B_E}{\rm Tr}_{(12)}(T) \,dT
\\
&\qquad  + {\rm Wg}^U((12);n)\ \frac{1}{{\rm vol}(B_E)}\int_{B_E}{\rm Tr}_e(T) \,dT 
+ {\rm Wg}^U(e;n)\frac{1}{{\rm vol}(B_E)}\int_{B_E}{\rm Tr}_{(12)}(T) \,dT
\\
& = - \frac{1}{8n(n+1)} + O\Bigl(\frac{1}{n^3}\Bigr)
\\
& \qquad - \frac{1}{n(n-1)(n+1)}\,\frac{1}{{\rm vol}(B_E)}\int_{B_E}\bigl({\rm Tr}(T)\bigr)^2 \,dT
+ \frac{1}{(n-1)(n+1)}\,\frac{1}{{\rm vol}(B_E)}\int_{B_E}{\rm Tr}(T^2) \,dT
\\
& = - \frac{1}{8n(n+1)}  + O\Bigl(\frac{1}{n^3}\Bigr)
\\
& \qquad - \frac{1}{n(n-1)(n+1)}\,
\left(n\frac{N_\infty\bigl(x_1^2\bigr)}{N_\infty(1)} + n(n-1)\frac{N_\infty\bigl(x_1x_2\bigr)}{N_\infty(1)} \right)
+ \frac{n}{(n-1)(n+1)}\,\frac{N_\infty\bigl(x_1^2\bigr)}{N_\infty(1)}
\\
& = - \frac{1}{8n(n+1)}  + O\Bigl(\frac{1}{n^3}\Bigr)
\\
& \qquad - \frac{1}{n(n-1)(n+1)}\,
\left(\frac{1}{8} + O\Bigl(\frac{1}{n^2}\Bigr)\right)
+ \frac{n}{(n-1)(n+1)}\,\left(\frac{1}{4} - \frac{1}{16n^2} + O\Bigl(\frac{1}{n^3}\Bigr)\right)
\\
& = \frac{n}{4(n-1)(n+1)} + O\Bigl(\frac{1}{n^2}\Bigr).
\end{align*}

On the other hand, when we consider a non-diagonal entry $(k,l)$, \eqref{eqp1:C-almost-isotropicity} shows that
\begin{equation*} \frac{1}{{\rm vol}(B_E)}\int_{B_E} \Re(T_{kl})^2\,dT
=  \frac{1}{{\rm vol}(B_E)}\int_{B_E}\Im(T_{kl})^2\,dT. \end{equation*}
To compute this integral, we note that
\begin{align*}
\frac{1}{{\rm vol}(B_E)}\int_{B_E} 2\Re(T_{kl})^2\,dT 
&= \frac{1}{{\rm vol}(B_E)}\int_{B_E} \bigl(\Re(T_{kl})^2 + \Im(T_{kl})^2\bigr)\,dT
= \frac{1}{{\rm vol}(B_E)}\int_{B_E} T_{kl}T_{lk}\,dT
\\
& = {\rm Wg}^U((12);n)\ \frac{1}{{\rm vol}(B_E)}\int_{B_E}{\rm Tr}_e(T) \,dT 
+ {\rm Wg}^U(e;n)\frac{1}{{\rm vol}(B_E)}\int_{B_E}{\rm Tr}_{(12)}(T) \,dT
\\
&= - \frac{1}{n(n-1)(n+1)}\,
\left(\frac{1}{8} + O\Bigl(\frac{1}{n^2}\Bigr)\right)
+ \frac{n}{(n-1)(n+1)}\,\left(\frac{1}{4} - \frac{1}{16n^2} + O\Bigl(\frac{1}{n^3}\Bigr)\right)
\\
&= \frac{n}{4(n-1)(n+1)} + O\Bigl(\frac{1}{n^3}\Bigr).
\end{align*}

We conclude that the covariance matrix ${\rm Cov}(B_E)$ of $B_E$ has the following form:
all its diagonal entries are $= \frac{n}{4(n-1)(n+1)} + O\Bigl(\frac{1}{n^2}\Bigr)$,
while the only non-zero non-diagonal entries are those giving the correlation between
marginals corresponding to two different diagonal entries of $T\in B_E$,
and these are $= - \frac{1}{8n(n+1)} + O\Bigl(\frac{1}{n^3}\Bigr)$. It follows that,
in order to find all eigenvalues of ${\rm Cov}(B_E)$, it suffices to find the eigenvalues
of the $n\times n$ submatrix $D_{B_E}$ which involves only the marginals corresponding to
diagonal entries of $T\in B_E$ (since the remaining eigenvalues are all $= \frac{n}{4(n-1)(n+1)} + O\Bigl(\frac{1}{n^2}\Bigr)$ as immediately seen from the form of ${\rm Cov}(B_E)$).

The submatrix $D_{B_E}$ is of the form
\begin{equation*} (a-b)I_n + bJ_n \end{equation*}
where $J_n$ is the matrix with all entries equal to 1 and $a = \frac{n}{4(n-1)(n+1)} + O\Bigl(\frac{1}{n^2}\Bigr)$,
$b = - \frac{1}{8n(n+1)} + O\Bigl(\frac{1}{n^3}\Bigr)$.
It is not difficult to see that such a matrix can only have two eigenvalues: 
the eigenvalue $a+(n-1)b$ (corresponding to the vector $(1,1,\ldots,1)$)
and the eigenvalue $a-b$ (which will have mutliplicity $n-1$). In our case,
these eigenvalues are $= \frac{1}{8(n+1)} + O\Bigl(\frac{1}{n^2}\Bigr)$
and $= \frac{n}{4(n-1)(n+1)} + O\Bigl(\frac{1}{n^2}\Bigr)$ respectively.
This shows that all eigenvalues of $D_{B_E}$, and thus of ${\rm Cov}(B_E)$ too, are approximately equal.

Finally, the covariance matrix ${\rm Cov}(\overline{B_E})$ of the volume-normalised unit ball $\overline{B_E}$ 
can be found by multiplying ${\rm Cov}(B_E)$ by $[{\rm vol}(B_E)]^{-2/n^2} \simeq n$.
\qed

\bigskip
\bigskip

\noindent\emph{Proof in the case where $E$ is the subspace of $\ {\mathbb R}$-self-adjoint matrices.}
Our aim is to compute integrals of the form 
\begin{equation*}
 \frac{1}{{\rm vol}(B_E)}\int_{B_E} T_{j_1l_1}T_{j_2l_2}\,dT, \qquad \qquad 1\ls j_1, j_2, l_1, l_2\ls n, \end{equation*}
so we apply Theorem \ref{thm:CMS-R-conj-inv} with $k=2$. Here 
\begin{align*}
M_{2k} = M_4 &= \Bigl\{\bigl\{\{1,2\}, \{3,4\}\bigr\}, \bigl\{\{1,3\}, \{2,4\}\bigr\}, \bigl\{\{1,4\}, \{2,3\}\bigr\}\Bigr\}
\\
\intertext{and if we express the pair partitions as permutations in $S_{2k}$ per our convention}
& = \bigl\{e, (23), (243)\bigr\}.
\end{align*}
Therefore, 
\begin{equation*}
M_4^{-1}M_4 := \bigl\{\sigma^{-1}\tau: \sigma, \tau\in M_4\bigr\}
= \bigl\{e, (23), (24), (243), (234)\bigr\},
\end{equation*}
and all these permutations have coset-type $(2)$ except for the trivial permutation $e$ which has coset-type $(1^2)$.

Moreover,
\begin{equation*}
H_2 = \bigl\langle (12), (34), (13)(24) \bigr\rangle 
= \bigl\{e, (12), (34), (13)(24), (12)(34), (14)(23), (1324), (1423)\bigr\}.
\end{equation*}
To compute the orthogonal Weingarten function on $S_4$, we first find the 
zonal spherical functions $\omega^{(2)}$ and $\omega^{(1^2)}$. It is easily seen that
\begin{equation*}
\omega^{(2)}(\sigma) = \frac{1}{8}\sum_{\zeta\in H_2}\chi^{(4)}(\sigma\zeta) = 1
\qquad \hbox{for every $\sigma \in S_4$.}
\end{equation*} 
On the other hand,
\begin{equation*}
\omega^{(1^2)}(e) = \frac{1}{8}\sum_{\zeta\in H_2}\chi^{(2^2)}(\zeta) = 1,
\end{equation*} 
while
\begin{equation*}
\omega^{(1^2)}(\sigma) = \omega^{(1^2)}((23))= \frac{1}{8}\sum_{\zeta\in H_2}\chi^{(2^2)}\bigl((23)\zeta\bigr) = -\frac{1}{2} \qquad \hbox{for every $\sigma \in S_4$ with coset-type $(2)$}
\end{equation*} 
(in particular for every permutation $\sigma \in M_4^{-1}M_4\setminus \{e\}$).

We can now compute:
\begin{align*}
{\rm Wg}^O(\sigma;n) &= \frac{8}{24}\sum_{\lambda\vdash 2}\frac{\chi^{2\lambda}(e)}{C_\lambda^\prime(n)}\,\omega^\lambda(\sigma) 
\\
& = \frac{1}{3}\left(\frac{\chi^{(4)}(e)\omega^{(2)}(\sigma)}{C_{(2)}^\prime(n)} 
+ \frac{\chi^{(2^2)}(e)\omega^{(1^2)}(\sigma)}{C_{(1^2)}^\prime(n)}\right)
 = \left\{\begin{array}{cl} \frac{n+1}{n(n-1)(n+2)} &  \hbox{if $\sigma = e$}
\\ \\
 -\frac{1}{n(n-1)(n+2)} &  \hbox{if $\sigma \in M_4^{-1}M_4\setminus \{e\}$}
\end{array}\right..
\end{align*}

\bigskip

The orthonormal basis that we have fixed is the following: 
\begin{equation*} \{J^{kk}: 1\ls k\ls n\}\bigcup \bigl\{\tfrac{1}{\sqrt{2}}\bigl(J^{kl} + J^{lk}\bigr): k < l\bigr\}.
\end{equation*}
According to Theorem \ref{thm:CMS-R-conj-inv}, we have
\begin{equation*}
 \frac{1}{{\rm vol}(B_E)}\int_{B_E} T_{i_1i_2}T_{i_3i_4}\,dT = 0
\end{equation*}
if there is at least one index that appears an odd number of times among the $i_j, j=1,\ldots,4$.
This immediately shows that marginals of the distribution which correspond
to two different non-diagonal entries or to one non-diagonal and one diagonal entry
are linearly uncorrelated. 

The only other case, where we have correlation, is when $i_1=i_2=j \neq k = i_3 = i_4$.
In this case
\begin{align*}
 \frac{1}{{\rm vol}(B_E)}\int_{B_E} T_{jj}T_{kk}\,dT
 &= {\rm Wg}^O(e;n)\ \frac{1}{{\rm vol}(B_E)}\int_{B_E}{\rm Tr}_e^\prime(T)\,dT
 + {\rm Wg}^O((23);n)\ \frac{1}{{\rm vol}(B_E)}\int_{B_E}{\rm Tr}_{(23)}^\prime(T)\,dT
 \\
 &\qquad + {\rm Wg}^O((243);n)\ \frac{1}{{\rm vol}(B_E)}\int_{B_E}{\rm Tr}_{(243)}^\prime(T)\,dT
 \\
 & = \frac{n+1}{n(n-1)(n+2)} \ \frac{1}{{\rm vol}(B_E)}\int_{B_E}\bigl({\rm Tr}(T)\bigr)^2\,dT
 -\frac{2}{n(n-1)(n+2)} \ \frac{1}{{\rm vol}(B_E)}\int_{B_E}{\rm Tr}(T^2)\,dT
 \\
 & = \frac{n+1}{n(n-1)(n+2)}\ \left(n\frac{N_\infty\bigl(x_1^2\bigr)}{N_\infty(1)} + n(n-1)\frac{N_\infty\bigl(x_1x_2\bigr)}{N_\infty(1)} \right)
  -\frac{2}{(n-1)(n+2)}\ \frac{N_\infty\bigl(x_1^2\bigr)}{N_\infty(1)}
 \\
 & = \frac{1}{n+2}\ \frac{N_\infty\bigl(x_1^2\bigr)}{N_\infty(1)} + \frac{n+1}{n+2}\ \frac{N_\infty\bigl(x_1x_2\bigr)}{N_\infty(1)}
 \\ 
 & = -\frac{1}{4n(n+2)} + O\Bigl(\frac{1}{n^3}\Bigr).
\end{align*}

Turning to second moments, we first handle the case $i_1=i_3=j \neq k = i_2 = i_4$:
\begin{align*}
\frac{1}{{\rm vol}(B_E)}\int_{B_E} \bigl(\tfrac{1}{\sqrt{2}}\bigl(T_{jk} + T_{kj}\bigr)\bigr)^2\,dT
&= \frac{1}{{\rm vol}(B_E)}\int_{B_E} 2 T_{jk}^2\,dT
\\
& = 2\left({\rm Wg}^O((23);n)\ \frac{1}{{\rm vol}(B_E)}\int_{B_E}{\rm Tr}_e^\prime(T)\,dT
 + {\rm Wg}^O(e;n)\ \frac{1}{{\rm vol}(B_E)}\int_{B_E}{\rm Tr}_{(23)}^\prime(T)\,dT\right.
 \\
 & \qquad \qquad \left.+ \, {\rm Wg}^O((24);n)\ \frac{1}{{\rm vol}(B_E)}\int_{B_E}{\rm Tr}_{(243)}^\prime(T)\,dT\right)
 \\
 & = 2\left(-\frac{1}{n(n-1)(n+2)} \ \frac{1}{{\rm vol}(B_E)}\int_{B_E}\bigl({\rm Tr}(T)\bigr)^2\,dT
 +  \frac{n+1}{n(n-1)(n+2)}\ \frac{1}{{\rm vol}(B_E)}\int_{B_E}{\rm Tr}(T^2)\,dT\right.
 \\
 & \qquad \qquad \left. -\, \frac{1}{n(n-1)(n+2)} \ \frac{1}{{\rm vol}(B_E)}\int_{B_E}{\rm Tr}(T^2)\,dT \right)
 \\
 & = -\frac{2}{n(n-1)(n+2)}\ \left(n\frac{N_\infty\bigl(x_1^2\bigr)}{N_\infty(1)} + n(n-1)\frac{N_\infty\bigl(x_1x_2\bigr)}{N_\infty(1)} \right) + \frac{2}{(n-1)(n+2)} \ n\frac{N_\infty\bigl(x_1^2\bigr)}{N_\infty(1)} 
 \\
 & = \frac{2}{n+2}\ \left(\frac{N_\infty\bigl(x_1^2\bigr)}{N_\infty(1)} - \frac{N_\infty\bigl(x_1x_2\bigr)}{N_\infty(1)}\right)
 \\
 & = \frac{1}{2(n+2)} + O\Bigl(\frac{1}{n^2}\Bigr).
\end{align*}

Finally,
\begin{align*}
\frac{1}{{\rm vol}(B_E)}\int_{B_E} T_{jj}^2\, dT
& = \sum_{\sigma\in M_4} \left({\rm Wg}^O(\sigma^{-1}e;n)\ \frac{1}{{\rm vol}(B_E)}\int_{B_E}{\rm Tr}_e^\prime(T)\,dT
 + {\rm Wg}^O(\sigma^{-1}(23);n)\ \frac{1}{{\rm vol}(B_E)}\int_{B_E}{\rm Tr}_{(23)}^\prime(T)\,dT\right.
 \\
 & \qquad\qquad \qquad \left.+ \, {\rm Wg}^O(\sigma^{-1}(243);n)\ \frac{1}{{\rm vol}(B_E)}\int_{B_E}{\rm Tr}_{(243)}^\prime(T)\,dT\right)
\\
& = \frac{1}{{\rm vol}(B_E)}\int_{B_E} T_{jj}T_{kk}\, dT + 2\cdot \frac{1}{{\rm vol}(B_E)}\int_{B_E} T_{jk}^2\, dT
\\
& = \frac{1}{2(n+2)} + O\Bigl(\frac{1}{n^2}\Bigr).
\end{align*}

We conclude that the covariance matrix ${\rm Cov}(B_E)$ of $B_E$ has the following form:
all its diagonal entries are $= \frac{1}{2(n+2)} + O\Bigl(\frac{1}{n^2}\Bigr)$,
while the only non-zero non-diagonal entries are those giving the correlation between
marginals corresponding to two different diagonal entries of $T\in B_E$,
and these are $= - \frac{1}{4n(n+2)} + O\Bigl(\frac{1}{n^3}\Bigr)$.

As before, it follows that the volume-normalised unit ball $\overline{B_E}$ is in almost isotropic position.
This completes the proof of Theorem \ref{thm:almost-isotropic} in the orthogonal case too.
\qed

\section{Entrywise negative correlation property of $B_{\!\!{\cal M}_n({\mathbb R})}$ or $B_{\!\!{\cal M}_n({\mathbb C})}$} \label{sec:neg-cor-prop}

According to one of the main results in \cite{Radke-V-2016}, a necessary condition for the variance
conjecture to be true for the unit ball of any $p$-Schatten norm on ${\cal M}_n({\mathbb F})$ is 
that the corresponding density $f_{a,b,c}(x)\cdot e^{-\|x\|_p^p}\,dx$ appearing in Lemma \ref{lem:RMT-reduction}
and Proposition \ref{prop:var-reduction} satisfies a certain negative correlation property: more specifically, we need to have
\begin{equation}\label{eq:neg-cor-prop-of-Mp} \frac{M_p(x_i^2x_j^2)}{M_p(1)} 
= \frac{M_p\bigl(x_1^2x_2^2\bigr)}{M_p(1)} < \left(\frac{M_p\bigl(x_1^2\bigr)}{M_p(1)}\right)^2 = 
\frac{M_p\bigl(x_i^2\bigr)}{M_p(1)}\frac{M_p\bigl(x_j^2\bigr)}{M_p(1)}\end{equation}
for any $i\neq j$. This could be used to deduce similar inequalities for the original uniform densities on the unit balls 
of the $p$-Schatten norms which satisfy the conjecture: in \cite{Radke-V-2016} we showed that, if $p$ is large enough
(and, as a limiting case, if $p=\infty$ as well), then \eqref{eq:neg-cor-prop-of-Mp} holds true and, combined with the invariances of $K_{p,{\cal M}_n({\mathbb F})}$, implies that
\begin{equation*}
 \int_{\overline{K}_{p,{\cal M}_n({\mathbb F})}} |T_{i,j}|^2|T_{i,r}|^2\,dT 
 = \int_{\overline{K}_{p,{\cal M}_n({\mathbb F})}} |T_{j,i}|^2|T_{r,i}|^2\,dT 
  < \left(\int_{\overline{K}_{p,{\cal M}_n({\mathbb F})}} |T_{i,j}|^2\,dT\right) \left(\int_{\overline{K}_{p,{\cal M}_n({\mathbb F})}} |T_{i,r}|^2\,dT\right)
\end{equation*}
for all $i, j,r$, $j\neq r$.
However, it was unclear from our method whether a similar negative correlation property is true for the remaining pairs of entries, that is, when we consider the integrals $\int_{\overline{K}_{p,{\cal M}_n({\mathbb F})}}|T_{i,j}|^2|T_{l,r}|^2\,dT$ with $i\neq l$, $j\neq r$.

We can now check that it fails to be true and that we do not have negative correlation for the remaining pairs of entries of $T\sim {\rm Unif}\bigl(K_{\infty,{\cal M}_n({\mathbb F})}\bigr)$ 
when ${\mathbb F}$ is either ${\mathbb R}$ or ${\mathbb C}$ (of course it doesn't fail 
by much since the variance conjecture is correct in these cases). 
The key ingredients we will use to check this are the relevant tools in the Weingarten calculus 
coming from \cite{Collins-Matsumoto-Saad-2014} and the estimates we obtained in Section \ref{sec:main-proof}
(which also allow us to verify again the negative correlation property for pairs of entries
coming from the same row or the same column).

\medskip

\noindent\emph{Proof when ${\mathbb F} = {\mathbb C}$.}
To compute and compare the integrals
\begin{equation*} 
\frac{1}{{\rm vol}(K_\infty)}\int_{K_\infty} |T_{i,j}|^2|T_{l,r}|^2\,dT, \qquad \qquad  \left(\frac{1}{{\rm vol}(K_\infty)}\int_{K_\infty} |T_{1,1}|^2\,dT\right)^2,
\end{equation*}
we apply Theorem \ref{thm:CMS-C-lr-inv} with $k=2$ or $1$ respectively. 
Starting with the latter, we see that
\begin{align*}
\frac{1}{{\rm vol}(K_\infty)}\int_{K_\infty} |T_{1,1}|^2\,dT
&= \frac{2}{{\rm vol}(K_\infty)}\int_{K_\infty} \Re^2(T_{1,1})\,dT = 
\frac{2}{{\rm vol}(K_\infty)}\int_{K_\infty} \Im^2(T_{1,1})\,dT 
\\
&= \frac{1}{n^2} \cdot \frac{1}{{\rm vol}(K_\infty)}\int_{K_\infty}{\rm Tr}(TT^\ast) \,dT
\\
\intertext{as expected from the isotropicity of $\overline{K}_{\infty,{\cal M}_n({\mathbb C})}$,}
&= \frac{1}{n^2}\cdot \frac{N_\infty\bigl(\|x\|_2^2\bigr)}{N_\infty(1)} = \frac{1}{2n}.
\end{align*}

Moreover,
\begin{align*}
\frac{4}{{\rm vol}(K_\infty)}\int_{K_\infty} \Re^2(T_{i,j})\Re^2(T_{l,r})\,dT
& = \frac{4}{{\rm vol}(K_\infty)}\int_{K_\infty} \Im^2(T_{i,j})\Im^2(T_{l,r})\,dT
= \frac{4}{{\rm vol}(K_\infty)}\int_{K_\infty} \Re^2(T_{i,j})\Im^2(T_{l,r})\,dT
\\
= \frac{1}{{\rm vol}(K_\infty)}\int_{K_\infty} |T_{i,j}|^2|T_{l,r}|^2\,dT
& = \frac{1}{{\rm vol}(K_\infty)}\int_{K_\infty} T_{i,j}T_{l,r}\overline{T_{i,j}T_{l,r}}\,dT
\\
\intertext{and when $i\neq l$, $j\neq r$}
& = {\rm Wg}^U(e;n,n) \, \frac{1}{{\rm vol}(K_\infty)}\int_{K_\infty}\bigl({\rm Tr}(TT^\ast)\bigr)^2 \,dT
\\
& \qquad \qquad
+ {\rm Wg}^U((12);n,n) \, \frac{1}{{\rm vol}(K_\infty)}\int_{K_\infty}{\rm Tr}\bigl((TT^\ast)^2\bigr) \,dT
\\
& = \frac{n^2+1}{(n(n^2-1))^2} \frac{N_\infty\bigl(\|x\|_2^4\bigr)}{N_\infty(1)}
-\frac{2}{n(n^2-1)^2} \frac{N_\infty\bigl(\|x\|_4^4\bigr)}{N_\infty(1)}
\\
& = \frac{n^2+1}{(n(n^2-1))^2}\frac{n^4}{4n^2-1} - \frac{2}{n(n^2-1)^2} \frac{3n^3-n}{2(4n^2-1)}
\\
& = \frac{n^6 - 2n^4 + n^2}{n^2 (n^2-1)^2 (4n^2-1)} = \frac{1}{4n^2-1}.
\end{align*}
We thus see that
\begin{align*} 
\frac{1}{{\rm vol}(K_\infty)}\int_{K_\infty} |T_{i,j}|^2|T_{l,r}|^2\,dT &> \left(\frac{1}{{\rm vol}(K_\infty)}\int_{K_\infty} |T_{i,j}|^2\,dT\right) \left(\frac{1}{{\rm vol}(K_\infty)}\int_{K_\infty} |T_{i,r}|^2\,dT\right)  
\\
& > \left(1- O(1/n^2)\right) \frac{1}{{\rm vol}(K_\infty)}\int_{K_\infty} |T_{i,j}|^2|T_{l,r}|^2\,dT
\end{align*}
(the latter inequality being a necessary consequence of the variance conjecture holding true).

On the other hand,
\begin{align*}
\frac{1}{{\rm vol}(K_\infty)}\int_{K_\infty} |T_{i,j}|^2|T_{i,r}|^2\,dT
& = \frac{1}{{\rm vol}(K_\infty)}\int_{K_\infty} |T_{j,i}|^2|T_{r,i}|^2\,dT
\\
& = \left({\rm Wg}^U(e;n,n) + {\rm Wg}^U((12);n,n)\right)\cdot
\frac{1}{{\rm vol}(K_\infty)}\int_{K_\infty}\left(\bigl({\rm Tr}(TT^\ast)\bigr)^2 + {\rm Tr}\bigl((TT^\ast)^2\bigr)\right) \,dT
\\
& = \frac{1}{n^2(n+1)^2}\frac{2n^4 + 3n^3-n}{2(4n^2-1)} 
 = \frac{1}{2n(2n+1)} < \left(\frac{1}{{\rm vol}(K_\infty)}\int_{K_\infty} |T_{1,1}|^2\,dT\right)^2
\end{align*}
in accordance with the conclusions from \cite{Radke-V-2016}.
\qed

\bigskip
\bigskip

\noindent\emph{Proof when ${\mathbb F} = {\mathbb R}$.}
Applying Theorem \ref{thm:CMS-R-lr-inv} with $k=1$ or $2$, we can obtain:
\begin{equation*}
\frac{1}{{\rm vol}(K_\infty)}\int_{K_\infty} |T_{1,1}|^2\,dT
= \frac{1}{n^2} \cdot \frac{1}{{\rm vol}(K_\infty)}\int_{K_\infty}{\rm Tr}(TT^t) \,dT
= \frac{1}{n^2}\cdot \frac{N_\infty\bigl(\|x\|_2^2\bigr)}{N_\infty(1)} = \frac{1}{2n+1};
\end{equation*}
\begin{align*}
\frac{1}{{\rm vol}(K_\infty)}\int_{K_\infty} |T_{i,j}|^2|T_{l,r}|^2\,dT 
&= \sum_{\tau_1,\tau_2\in M_4} {\rm Wg}^O(\tau_1;n){\rm Wg}^O(\tau_2;n)\ \frac{1}{{\rm vol}(K_\infty)}\int_{K_\infty}{\rm Tr}^\prime_{\tau_1^{-1}\tau_2}(TT^t)\,dT
\\
& = \bigl(\bigl({\rm Wg}^O(e;n)\bigr)^2 + 2\bigl({\rm Wg}^O((23);n)\bigr)^2\bigr)\ \frac{1}{{\rm vol}(K_\infty)}\int_{K_\infty}\bigl({\rm Tr}(TT^t)\bigr)^2\,dT
\\
& \qquad \qquad + \bigl(4{\rm Wg}^O(e;n){\rm Wg}^O((23);n) + 2\bigl({\rm Wg}^O((23);n)\bigr)^2\bigr)
\ \frac{1}{{\rm vol}(K_\infty)}\int_{K_\infty}{\rm Tr}\bigl((TT^t)^2\bigr)\,dT
\\
& = \frac{n^2+2n+3}{(n(n-1)(n+2))^2}\frac{N_\infty\bigl(\|x\|_2^4\bigr)}{N_\infty(1)}
- \frac{4n+2}{(n(n-1)(n+2))^2}\frac{N_\infty\bigl(\|x\|_4^4\bigr)}{N_\infty(1)}
\\
& = \frac{n+1}{n(2n+1)(2n+3)}
\end{align*}
when $i\neq l$, $j\neq r$, while
\begin{align*}
\frac{1}{{\rm vol}(K_\infty)}\int_{K_\infty} |T_{i,j}|^2|T_{i,r}|^2\,dT
& = \frac{1}{{\rm vol}(K_\infty)}\int_{K_\infty} |T_{j,i}|^2|T_{r,i}|^2\,dT
\\
& = \sum_{\tau_1,\tau_2, \sigma_2\in M_4} {\rm Wg}^O(\tau_1;n){\rm Wg}^O(\sigma_2^{-1}\tau_2;n)\ \frac{1}{{\rm vol}(K_\infty)}\int_{K_\infty}{\rm Tr}^\prime_{\tau_1^{-1}\tau_2}(TT^t)\,dT
\\
& = \left(\sum_{\sigma_2\in M_4} {\rm Wg}^O(\sigma_2^{-1};n)\right)^2\frac{N_\infty\bigl(\|x\|_2^4\bigr)}{N_\infty(1)}
+ 2\,\left(\sum_{\sigma_2\in M_4} {\rm Wg}^O(\sigma_2^{-1};n)\right)^2\frac{N_\infty\bigl(\|x\|_4^4\bigr)}{N_\infty(1)}
\\
& = \frac{1}{(n(n+2))^2}\left(\frac{n^4 + n^3 + n}{(2n+1)(2n+3)} + \frac{3n^3+4n^2-n}{(2n+1)(2n+3)}\right)
\\
& = \frac{1}{(2n+1)(2n+3)}.
\end{align*}

These show that we have analogous conclusions as in the unitary case. \qed

\bigskip

\noindent {\bf Acknowledgement.} Research on this project was in part conducted while the author was in residence at the Mathematical Sciences Research Institute in Berkeley, California, during the fall semester of 2017. She gratefully acknowledges the support of the institute and of the National Science Foundation under Grant No. 1440140.

\bigskip

\bigskip

\medskip

\small

%

\noindent \textsc{Beatrice-Helen Vritsiou:} Department of Mathematical and Statistical Sciences,
University of Alberta, CAB 632, Edmonton, AB, Canada T6G 2G1

\noindent 
{\it E-mail:} \texttt{vritsiou@ualberta.ca}

\end{document}